\newtheorem{theorem}{Theorem}[section]
\newtheorem{lemma}{Lemma}[section]
\newtheorem{corollary}{Corollary}[section]
\newtheorem{fact}{Fact}[section]
\newtheorem{claim}{Claim}[section]
\newtheorem*{example}{Example}
\newtheorem*{remark}{Remark}
\newcommand{\wh}{\widehat}
\newcommand{\wb}{\widebar}
\newcommand{\wt}{\widetilde}
\newcommand{\rank}{{\rm rank}}
\newcommand{\sign}{{\rm sign}}
\newcommand{\E}{{\rm E}}
\newcommand{\var}{{\rm Var}}
\newcommand{\cov}{{\rm Cov}}
\newcommand{\argmin}{\mathop{\arg\min}}
\newcommand{\bp}{\bm{p}}
\newcommand{\br}{\bm{r}}
\newcommand{\bv}{\bm{v}}
\newcommand{\bw}{\bm{w}}
\newcommand{\bx}{\bm{x}}
\newcommand{\by}{\bm{y}}
\newcommand{\bz}{\bm{z}}
\newcommand{\bEta}{\bm{\eta}}
\newcommand{\bmu}{\bm{\mu}}
\newcommand{\btheta}{\bm{\theta}}
\newcommand{\bbeta}{\bm{\beta}}
\newcommand{\bB}{\bm{B}}
\newcommand{\bD}{\bm{D}}
\newcommand{\bH}{\bm{H}}
\newcommand{\bI}{\bm{I}}
\newcommand{\bP}{\bm{P}}
\newcommand{\bR}{\bm{R}}
\newcommand{\bT}{\bm{T}}
\newcommand{\bX}{\bm{X}}
\newcommand{\bY}{\bm{Y}}
\newcommand{\bSigma}{\bm{\Sigma}}
\newcommand{\bDelta}{\bm{\Delta}}
\newcommand{\bPsi}{\bm{\Psi}}
\newcommand{\bIn}{\bm{1}_n}
\newcommand{\wbK}{\wb{K}}
\newcommand{\nb}{{\rm ne}}
\newcommand{\cl}{{\rm cl}}
\numberwithin{equation}{section}
\title{Neighborhood selection with application to social networks}
\author{Nana Wang \thanks{nnawang@ucdavis.edu}
\\Department of Statistics 
\\University of California, Davis
\and
Wolfgang Polonik \thanks{wpolonik@ucdavis.edu}
\\Department of Statistics 
\\University of California, Davis
}
\date{}
\begin{document}
\begin{spacing}{1}
\maketitle

\begin{abstract}
	The topic of this paper is modeling and analyzing dependence in stochastic social networks. 
	We propose a latent variable block model that allows the analysis of dependence between
	blocks via the analysis of a latent graphical model. Our approach is based on the 
	idea underlying the neighborhood selection scheme put forward by \cite{meinshausen2006high}. However, because of the latent nature of our model, estimates have to be 
	used in lieu of the unobserved variables. This leads to a novel analysis of graphical 
	models under uncertainty, in the spirit of~\cite{rosenbaum2010sparse}, or Belloni, 
	Rosenbaum and Tsybakov (2016). Lasso-based selectors, and a class of Dantzig-type
	selectors are studied. 
\end{abstract}

\section{Introduction} \label{sec: intro}
The study of random networks has been a topic of great interest in recent years, e.g. see~\cite{kolaczyk2009statistical} and~\cite{newman2010networks}. A network is defined as a structure composed of nodes and edges connecting nodes in various relationships.
The observed network can be represented by an $N\times N$ adjacency matrix $\bY = (Y_{ij})_{i,j = 1,\ldots,N}$, where $N$ is the total number of nodes within the network. For a binary relation network, as considered here, $Y_{ij}=1$ if there is an edge from node $i$ to node $j,$ and $0$ otherwise. In the following we identify an adjacency matrix $\bY$ with the network itself.

Most relational phenomena are dependent phenomena, and dependence is often of substantive interest. \cite{frank1986markov} and  \cite{wasserman1996logit} introduced exponential random graph models which allow the modelling of a wide range of dependences of substantive interest, including transitive closure. For such models, $Y_{ij}\in\{0,1\}$ and the distribution of $\bY$ is assumed to follow the exponential family form 
%
$P_{\btheta}(\bY=\by)=\exp\left(\btheta\cdot \bT(\by)-\phi(\btheta)\right), \by\in\mathcal{Y},$ 
%
where
$\phi(\btheta)=-\log\left(\sum_{\by\in\mathcal{Y}}\exp(\btheta\cdot \bT(\by))\right)$ and $\bT(\by):\mathcal{Y}\rightarrow \mathbb{R}^q,$ are the sufficient statistics, e. g. the total number of edges.
However, as mentioned in \cite{schweinberger2014local}, exponential random graph models are lacking neighborhood structure, and that makes modelling dependencies challenging for such networks. Neighborhoods (or communities, blocks) are in general defined as a group of individuals (nodes), such that individuals within a group interact with each other more frequently than with those outside the group. Recently, \cite{schweinberger2014local} proposed the concept of local dependence in stochastic networks. This concept allows for dependence within neighborhoods, while different neighborhoods are independent.

In contrast to that, our work is considering dependence between blocks, while the connections within blocks are assumed independent. We also assume the blocks to be known. We then propose to analyze dependencies between blocks by means of graphical models. To this end, we assume an undirected network so that 
\begin{align} \label{model: level 2}
	Y_{ij}|(\bP,\bz)\sim {\rm Bernoulli}\left(p_{\bz[i],\bz[j]}\right),
\end{align}
where $\bz[i] \in \wbK := \{1, \cdots, K\}, i = 1,\ldots, N$ indicate block memberships in one of $K$ blocks; $p_{k,\ell}, k,\ell \in \wbK$ govern the intensities of the connectivities within and between blocks, $0 < p_{k,l} < 1$; and $\bP = (p_{k, l})_{k, l\in\wbK}$ is a $K\times K$ symmetric matrix. We then put a Gaussian logistic model on the $p_{k,\ell}$. More precisely, for the diagonal elements $(p_{k,k})_{1\leq k\leq K}$, assume that 
\begin{align} \label{model: level 1}
	\log\left(\frac{p_{k,k}}{1-p_{k,k}}\right)=\bx_{k}^T\bbeta+\epsilon_{k},\; 1\leq k \leq K, 
\end{align}
where $\bx_{k}$ is a $(L\times1)$ vector of given co-variables corresponding to block $k$, and $\bbeta$ is the $(L\times1)$ parameter vector. Furthermore, $\bm{\epsilon} = (\epsilon_1,\cdots,\epsilon_K)^T$ with
\begin{align} \label{model: level1_error}
	\bm{\epsilon}\sim N(\bm{0},\bSigma), 
\end{align}
where $\bSigma=(\sigma_{kl})_{1\leq k,l\leq K}$ is an nonsingular covariance matrix. 
Each off-diagonal element $p_{k,l}$ ($k\neq l$) is assumed to be independent with all the other elements of $\bP$. The latter assumption is made to simplify the exposition. A similar model can be found in \cite{xu2014dynamic}.

The dependence between the $p_{k,k}$ induces dependence between blocks. We can thus analyze this induced dependence in our network model, by using methods from Gaussian graphical models, via selecting the zeros in the precision matrix $\bSigma^{-1}$. Adding dependencies between the $p_{k,\ell}$ with $k \ne \ell$ would increase the dimension of $\bSigma$, and induce `second order dependencies' to the network structure, namely, dependencies of block connections between different pairs of blocks.

It is crucial to observe that this Gaussian graphical model is defined in terms of the $p_{k,k}$ (or, more precisely, in terms of their log-adds ratios), and that these quantities obviously are not observed. Thus, they need to be estimated from our network data, and, to this end, we here assume the availability of iid observations of the network. This estimation, in turn, induces {\em additional randomness} to our analysis of the graphical model. We are therefore facing similar challenges as in the analysis of Gaussian graphical models {\em under uncertainty}. However, our situation is more complex, as will become clear below.

The methods for neighborhood selection considered here, are based on the column-wise methodology of \cite{meinshausen2006high}. We apply this methodology (under uncertainty) to some known selection methods from the literature, thereby, adjusting these methods for the additional uncertainty. The selection methods considered here are (i) the graphical Lasso of \cite{meinshausen2006high}, (ii) a class of Dantzig-type selectors, that includes the Dantzig selector of \cite{candes2007dantzig}, and (iii) the matrix uncertainty selector of \cite{rosenbaum2010sparse}. This will lead to `graphical' versions of the respective procedures. The graphical Dantzig selector already has been studied in \cite{yuan2010high}, but without the additional uncertainty we are facing here. This leads to novel selection methodologies for which we derive statistical guarantees. We also present numerical studies to illustrate their finite sample performance.

More details on our latent variable block model is discussed in Section~\ref{sec: lbm}. Thereby we also introduce some basic notation. Section~\ref{sec: est} introduces our neighborhood selection method-ologies, and presents results on their large sample performance. Tuning parameter selection is also discussed there. Numerical studies are presented in Section~\ref{sec: num}, and the proofs of our main results are in Appendix~\ref{sec: pfs}.. 

\section{Some important preliminary facts} \label{sec: lbm}
Let $\bEta=(\eta_{1},\cdots,\eta_{K})^T$ with $\eta_{k}=\log(p_{kk}/(1-p_{kk}))$ be the vector of log odds of the within-block  connection probabilities, and let $\bX_{K\times L}=(\bx_{1},\cdots,\bx_{K})^T$ be the design matrix. Our latent variable block model \eqref{model: level 2} - \eqref{model: level1_error} says that $\bEta\sim N(\bX\bbeta,\bSigma)$. The dependence among the $\eta_k,$ encoded in $\bSigma,$ is propagated to the $p_{kk}$. Let $\bSigma^{-1} = \bD = (d_{kl})_{1 \le k,l \le K}$, then the following fact holds.
\begin{fact} 
	Under \eqref{model: level 2} - \eqref{model: level1_error}, we have
	$d_{kl}=0,$ if and only if, $p_{k,k}$ is independent of $p_{l,l}$ given the other variables $\bp_{-(k,l)}=\{p_{i,j}: (i, j)\in\wbK\times\wbK\backslash\{(k, k), (l, l)\}, i \leq j\}$, or just given $\{p_{i,i}: i\in\wbK\backslash\{k, l\}\}$. 
\end{fact}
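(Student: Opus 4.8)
The plan is to reduce the assertion, in two elementary steps, to the classical characterization of conditional independence in a Gaussian graphical model through the zero pattern of the precision matrix, and then to apply that characterization to $\bEta\sim N(\bX\bbeta,\bSigma)$.

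First I would pass from the $p_{kk}$ to the log-odds $\eta_k$. Since the logit map $p\mapsto\log\!\big(p/(1-p)\big)$ is a strictly increasing bijection from $(0,1)$ onto $\mathbb{R}$, the random variables $\eta_k$ and $p_{kk}$ generate the same $\sigma$-algebra. Hence $p_{kk}$ and $p_{ll}$ are conditionally independent given any collection of the remaining variables if and only if $\eta_k$ and $\eta_l$ are conditionally independent given the images of those variables under the coordinatewise logit. This turns the claim into a statement purely about the Gaussian vector $\bEta$.

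Next I would dispose of the off-diagonal entries appearing in the larger conditioning set. By assumption each $p_{ij}$ with $i\neq j$ is independent of all other entries of $\bP$, hence jointly independent of $(\eta_k,\eta_l)$ and of $\{\eta_i:i\in\wbK\backslash\{k,l\}\}$. Consequently these factors split off from the joint density, and conditioning additionally on $\{p_{ij}:i<j\}$ leaves the conditional law of $(\eta_k,\eta_l)$ given $\{\eta_i:i\neq k,l\}$ unchanged. This is exactly what is needed to show that the two conditioning sets in the statement, the full $\bp_{-(k,l)}$ and the reduced $\{p_{ii}:i\in\wbK\backslash\{k,l\}\}$, produce the same conditional-independence relation, so it suffices to work with the latter, i.e.\ with $\{\eta_i:i\neq k,l\}$.

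Finally I would invoke the standard Gaussian fact. Setting $A=\{k,l\}$ and $B=\wbK\backslash\{k,l\}$ and partitioning $\bD=\bSigma^{-1}$ accordingly, the conditional distribution of $(\eta_k,\eta_l)$ given $\bEta_B$ is Gaussian with $\cov\big((\eta_k,\eta_l)\mid\bEta_B\big)=(\bD_{AA})^{-1}$, a $2\times 2$ matrix whose off-diagonal entry equals $-d_{kl}/\det(\bD_{AA})$. Because jointly Gaussian coordinates are conditionally independent precisely when their conditional covariance vanishes, $\eta_k$ and $\eta_l$ are conditionally independent given $\bEta_B$ if and only if $d_{kl}=0$; chaining the three steps yields the claim. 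I expect the only genuinely delicate point to be the middle step, namely arguing cleanly that enlarging the conditioning set by the jointly independent off-diagonal variables does not change the conditional (in)dependence of $p_{kk}$ and $p_{ll}$; the bijection argument and the textbook precision-matrix characterization are routine.
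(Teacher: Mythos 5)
Your argument is correct and is exactly the route the paper intends: the paper offers no proof of its own but delegates the statement to a citation of the standard Gaussian graphical-model characterization (zero entries of $\bD=\bSigma^{-1}$ correspond to conditional independence of the coordinates of $\bEta$), which is precisely your final step, including the $2\times2$ block-inverse computation showing the conditional covariance's off-diagonal entry is $-d_{kl}/\det(\bD_{AA})$. Your two preliminary reductions — the logit bijection identifying the $\sigma$-algebras of $p_{kk}$ and $\eta_k$, and the observation that the mutually independent off-diagonal $p_{ij}$ factor out of the conditioning set — are the details the paper leaves implicit, and both are handled correctly.
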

\noindent
In other words, if 
\begin{align*}
	E=\{(k,l): d_{kl}\neq 0, k\neq l\}
\end{align*}
denotes the edge set of the graph corresponding to $\bEta$, then, under our latent variable block model, $(k,l)\notin E$ if and only if $p_{k,k}$  is conditionally independent with $p_{l,l}$ given the other variables $\{p_{ii}:1\leq i \leq K, i\notin\{k,l\}\}$. Identifying nonzero elements in $\bD$ thus will reveal the conditional dependence structure of the blocks in our underlying network. 

We will use the relative number of edges within each block, as estimates for the unobserved values $p_{kk},k=1,\ldots,K$. Let $S_k=\sum_{\bz[i]=\bz[j]=k}Y_{ij},\,k=1,\cdots, K,$ denote the total number of edges in the $K$ blocks. 
\begin{fact} Under \eqref{model: level 2} - \eqref{model: level1_error}, we have
	$$\sign(\sigma_{kl})=\sign\left(\cov(S_k,S_l)\right).$$
\end{fact}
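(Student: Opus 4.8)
The plan is to condition on the latent vector $\bEta$ (equivalently, on the within-block probabilities $p_{11},\dots,p_{KK}$), reduce the claim to a statement about $\cov(p_{kk},p_{ll})$, and then transfer the sign through the monotone logistic link using the joint normality of $\bEta$.

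First I would record the conditional structure of the edge counts. Write $m_k$ for the number of unordered within-block pairs in block $k$, so that, conditionally on $\bEta$, the variable $S_k$ is a sum of $m_k$ independent $\mathrm{Bernoulli}(p_{kk})$ indicators; in particular $\E[S_k\mid\bEta]=m_k\,p_{kk}$. Since for $k\neq l$ the counts $S_k$ and $S_l$ are sums over disjoint sets of node pairs whose indicators are conditionally independent given $\bEta$, we have $\cov(S_k,S_l\mid\bEta)=0$. The law of total covariance then gives $\cov(S_k,S_l)=\E[\cov(S_k,S_l\mid\bEta)]+\cov(\E[S_k\mid\bEta],\E[S_l\mid\bEta])=m_k m_l\,\cov(p_{kk},p_{ll})$. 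Because $m_k m_l>0$, it suffices to show that $\sign(\cov(p_{kk},p_{ll}))=\sign(\sigma_{kl})$.

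For the second step, recall that $p_{kk}=g(\eta_k)$ with $g(t)=e^t/(1+e^t)$ strictly increasing, and that $(\eta_k,\eta_l)$ is bivariate normal with covariance $\sigma_{kl}$. Hence the claim reduces to the classical fact that, for jointly Gaussian $(\eta_k,\eta_l)$ and strictly increasing $g$, the sign of $\cov(g(\eta_k),g(\eta_l))$ equals the sign of $\sigma_{kl}$. I would prove this by Gaussian integration by parts (Price's theorem): differentiating $\E[g(\eta_k)g(\eta_l)]$ with respect to the covariance parameter $\sigma_{kl}$ (with the variances held fixed) yields $\E[g'(\eta_k)g'(\eta_l)]$, which is strictly positive since $g'>0$. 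Thus $\cov(p_{kk},p_{ll})$ is strictly increasing in $\sigma_{kl}$ and vanishes at $\sigma_{kl}=0$, where $\eta_k$ and $\eta_l$ are independent, so its sign matches that of $\sigma_{kl}$.

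The main obstacle is this last step: making the differentiation under the integral sign rigorous and confirming that $\E[g'(\eta_k)g'(\eta_l)]>0$ strictly over the admissible range $|\sigma_{kl}|\le\sqrt{\sigma_{kk}\sigma_{ll}}$. An alternative, which avoids smoothness bookkeeping, is to invoke Hoeffding's covariance identity together with the positive (respectively negative) quadrant dependence of a bivariate normal when $\sigma_{kl}>0$ (respectively $\sigma_{kl}<0$); since $g$ is monotone, this delivers the same sign conclusion directly.
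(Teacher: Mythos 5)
Your proof is correct. The paper itself does not write out an argument for this fact; it simply points to Liu, Lafferty and Wasserman (2009, Section 3, Lemma 2), which supplies exactly the second half of your argument --- that the sign of $\cov(g(\eta_k),g(\eta_l))$ agrees with $\sign(\sigma_{kl})$ when $g$ is strictly increasing and $(\eta_k,\eta_l)$ is bivariate normal. What you add, and what the paper leaves entirely implicit, is the first reduction: conditioning on $\bEta$, using that the within-block edge indicators are conditionally independent Bernoulli variables over disjoint pair sets for $k\neq l$, and applying the law of total covariance to obtain $\cov(S_k,S_l)=m_k m_l\,\cov(p_{kk},p_{ll})$. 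That step is sound, and the closing step is unproblematic: nonsingularity of $\bSigma$ forces $|\sigma_{kl}|<\sqrt{\sigma_{kk}\sigma_{ll}}$, so the differentiation in Price's theorem takes place on the open, nondegenerate parameter region, and since the logistic link has $g'>0$ everywhere, $\E[g'(\eta_k)g'(\eta_l)]$ is strictly positive, giving strict monotonicity of $\cov(p_{kk},p_{ll})$ in $\sigma_{kl}$ with value $0$ at $\sigma_{kl}=0$. Your alternative route via Hoeffding's identity and the positive/negative quadrant dependence of the bivariate normal is equally valid. In short, there is no gap; your write-up is a self-contained version of the proof the paper outsources to the citation.
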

\noindent
For proofs of the two facts see \cite{paulo2012asymptotics} (page 13, Theorem 1.35) and \cite{liu2009nonparanormal} (Section 3, Lemma 2), respectively. 

\section{Neighborhood selection} \label{sec: est}
Here we discuss the identification of the nonzero elements in $\bD$. We first assume that \eqref{model: level 2} - \eqref{model: level1_error} holds with a known $\bbeta$, and we write $\bmu = (\mu_1,\cdots,\mu_K)^T =\bX\bbeta$. We also assume that $0 < p_{i,j} < 1$ for all $i, j \in\wbK$. Let $\bY^{(t)}, t = 1,\ldots,n$ denote $n$ iid observed networks with corresponding independent unobserved random vectors $\bp^{(t)}, t = 1,\ldots,n$ following our model. Let $\mathcal{A}_1,\cdots,\mathcal{A}_K$ denote the $K$ blocks of the networks $\bY^{(t)}$ and $\mathcal{V}=\{1,\cdots,N\}$ be the node set.
Assume $\mathcal{A}_k$ and $\mathcal{A}_l$ are mutually exclusive for $k\neq l$ so that $\bigcup_{k=1}^K\mathcal{A}_k=\mathcal{V}$. The number of possible edges within each block is $m_k=|\mathcal{A}_k|(|\mathcal{A}_k|-1)/2$ for $k=1,\cdots,K$, and the number of possible edges between block $k$ and block $l$ is then $|\mathcal{A}_k||\mathcal{A}_l|$ for $1\leq k\neq l\leq K$. We would like to point out again that the block membership variable $\bz$ is assumed to be known. 

\subsection{Controlling the estimation error}

Given a network $\bY^{(t)}$, let $S^{(t)}_k=\sum_{\bz[i]=\bz[j]=k}Y_{ij}^{(t)}, k = 1,\ldots,K, t = 1,\ldots,n$ denote the number of edges within block $k$ in network $t$. Natural estimates of $p_{kk}^{(t)}$ and $\eta_k^{(t)}$ are 
\begin{align} \label{def: hatPeta}
	\wt{p}_{k,k}^{(t)}=\frac{S_k^{(t)}}{m_k}
	\quad\text{ and}\quad
	\wt{\eta}_k^{(t)}=\log\Bigg(\frac{\wt{p}_{k,k}^{(t)}}{1-\wt{p}_{k,k}^{(t)}}\Bigg)
\end{align}
respectively. 

Let $\wt{\bEta}^{(t)}=(\wt{\eta}_1^{(t)},\cdots,\wt{\eta}_K^{(t)})^T$, 
and let $m_{\min}=\min_{1\leq k\leq K}m_k$ be the minimum number of possible edges within a block, which of course measures the minimum blocksize.
\begin{fact} 
	Assume that $K$ is fixed. Then, under \eqref{model: level 2} - \eqref{model: level1_error}, we have for each $t = 1,\cdots, n$, 
	\begin{align*}
	\wt{\bEta}^{(t)}\rightarrow N(\bX\bbeta, \bSigma)\quad \text{in distribution as }\,m_{\min}\rightarrow\infty.
	\end{align*}
\end{fact}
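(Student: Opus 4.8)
The plan is to establish convergence in distribution componentwise first, then upgrade to the joint statement via the Cram\'er--Wold device together with the delta method. The key observation is that within each block $k$, the sum $S_k^{(t)} = \sum_{\bz[i]=\bz[j]=k} Y_{ij}^{(t)}$ is, \emph{conditionally} on the latent value $p_{kk}^{(t)}$, a sum of $m_k$ independent ${\rm Bernoulli}(p_{kk}^{(t)})$ random variables. Hence $\wt p_{k,k}^{(t)} = S_k^{(t)}/m_k$ is a conditional sample mean, and by the conditional law of large numbers $\wt p_{k,k}^{(t)} \to p_{kk}^{(t)}$ almost surely as $m_k \to \infty$, so that $\wt p_{k,k}^{(t)} - p_{kk}^{(t)} \to 0$ in probability as $m_{\min}\to\infty$ (recall $K$ is fixed, so $m_{\min}\to\infty$ forces every $m_k\to\infty$).

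First I would make precise the sense in which the conditional Bernoulli structure lets us replace $\wt p_{k,k}^{(t)}$ by the latent $p_{kk}^{(t)}$. Because the conditional variance of $\wt p_{k,k}^{(t)}$ given $p_{kk}^{(t)}$ equals $p_{kk}^{(t)}(1-p_{kk}^{(t)})/m_k \le 1/(4 m_k)$, a conditional Chebyshev bound gives, for any $\delta>0$,
\begin{align*}
	P\big(|\wt p_{k,k}^{(t)} - p_{kk}^{(t)}| > \delta \,\big|\, p_{kk}^{(t)}\big) \le \frac{1}{4 m_k \delta^2},
\end{align*}
and taking expectations over $p_{kk}^{(t)}$ shows the unconditional probability is at most $1/(4 m_k \delta^2) \to 0$. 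Thus $\wt p_{k,k}^{(t)} = p_{kk}^{(t)} + o_P(1)$, and since $0 < p_{kk}^{(t)} < 1$ almost surely, the logit map $x \mapsto \log(x/(1-x))$ is continuous on the relevant range; by the continuous mapping theorem $\wt\eta_k^{(t)} = \eta_k^{(t)} + o_P(1)$. In vector form, $\wt\bEta^{(t)} = \bEta^{(t)} + o_P(\bm 1)$, and since $\bEta^{(t)}\sim N(\bX\bbeta,\bSigma)$ exactly by \eqref{model: level1_error}, an application of Slutsky's theorem yields $\wt\bEta^{(t)} \to N(\bX\bbeta,\bSigma)$ in distribution.

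To organize the argument cleanly for the full random vector rather than coordinatewise, I would invoke the Cram\'er--Wold device: for an arbitrary fixed $\bm a\in\mathbb R^K$, the linear combination $\bm a^T \wt\bEta^{(t)}$ differs from $\bm a^T\bEta^{(t)}$ by $\bm a^T(\wt\bEta^{(t)} - \bEta^{(t)}) = o_P(1)$, whence $\bm a^T\wt\bEta^{(t)} \to \bm a^T N(\bX\bbeta,\bSigma)$ in distribution for every $\bm a$, which is exactly the required joint convergence.

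The step requiring the most care is handling the logit transform near the boundary of $(0,1)$. If $p_{kk}^{(t)}$ is very close to $0$ or $1$, or if $\wt p_{k,k}^{(t)}$ hits exactly $0$ or $1$ on a set of non-vanishing probability, the logit blows up and the simple continuous-mapping step can fail. The rescue is that $\bEta^{(t)}$ has a Gaussian (hence continuous, full-support on $\mathbb R$) law, so $p_{kk}^{(t)}\in(0,1)$ with probability one and the event that $\wt p_{k,k}^{(t)}$ lands exactly at a boundary has probability tending to zero; more carefully, one localizes to a compact subinterval $[\delta, 1-\delta]\subset(0,1)$ on which the logit is uniformly Lipschitz, argues convergence there, and lets $\delta\downarrow 0$ using the Gaussian tail control on $P(p_{kk}^{(t)}\notin[\delta,1-\delta])$. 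This is the only genuinely delicate point; the remainder is a routine concatenation of the conditional weak law, continuous mapping, and Slutsky's theorem.
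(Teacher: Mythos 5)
Your argument is correct. Note first that the paper does not actually supply a proof of this Fact: it is stated without one, and the closest the paper comes is Lemma~\ref{lemma: net_etaDiff} (proved in the appendix via Lemma~\ref{lemma: net_b0}), which is a quantitative, non-asymptotic version of the same assertion for the truncated estimates $\wh\eta_k^{(t)}$. Your route is the natural qualitative one: a conditional Chebyshev bound using $\var(\wt p_{k,k}^{(t)}\mid p_{kk}^{(t)}) = p_{kk}^{(t)}(1-p_{kk}^{(t)})/m_k \le 1/(4m_k)$, then continuous mapping through the logit, then Slutsky. The paper's machinery instead uses Hoeffding's inequality for the conditional Bernoulli sums, a mean-value expansion $\wt\eta_k^{(t)} - \eta_k^{(t)} = (\xi_k^{(t)}(1-\xi_k^{(t)}))^{-1}(\wt p_{k,k}^{(t)} - p_{kk}^{(t)})$, and explicit Gaussian tail bounds to control the event that $p_{kk}^{(t)}$ is near $\{0,1\}$; this buys explicit rates (needed for the main theorems), whereas your argument buys simplicity and suffices for the purely asymptotic Fact. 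Your handling of the boundary issue --- localizing to $[\delta, 1-\delta]$ where the logit is Lipschitz and using the Gaussian law of $\eta_k^{(t)}$ to make $P(p_{kk}^{(t)}\notin[\delta,1-\delta])$ small --- is the qualitative counterpart of the paper's truncation-plus-tail-bound step and is the one genuinely delicate point; you identify and resolve it correctly. Two cosmetic remarks: the delta method announced in your opening sentence is never used (continuous mapping does the work), and the Cram\'er--Wold step is dispensable, since for fixed $K$ componentwise convergence in probability already gives $\wt{\bEta}^{(t)} - \bEta^{(t)} = o_P(1)$ as a vector, whence convergence in distribution to the exact law $N(\bX\bbeta,\bSigma)$ of $\bEta^{(t)}$.
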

This result tells us that, if we base our edge selection on $\wt{\bEta}^{(t)}$, then, for $m_{\min}$ large, we are {\em close} to a Gaussian model, and thus we can hope that our analysis is similar to that of a Gaussian graphical model. However, the approximation error has to be examined carefully. In order to do that, we first truncate the $\wt{p}_{kk}^{(t)}$'s, or, equivalently, the $\wt{\eta}_k^{(t)}$. For $T > 0,$ let 
\begin{equation*}
	\wh{\eta}_k^{(t)} = 
	\begin{cases}
		-T & \mbox{if  $\wt{\eta}_k^{(t)} < -T$}\\
		\wt{\eta}_k^{(t)} &\mbox{if  $|\wt{\eta}_k^{(t)}| \leq T$} \\
		T & \mbox{if  $\wt{\eta}_k^{(t)} > T$}. 
	\end{cases}
\end{equation*}
This truncation corresponds to
\begin{equation*}
	\wh{p}_{k,k}^{(t)} = 
	\begin{cases}
		(1+e^T)^{-1} & \mbox{if  $\wt{p}_{k,k}^{(t)} < (1+e^T)^{-1}$ }\\
		\wt{p}_k^{(t)} &\mbox{if  $(1+e^T)^{-1} \leq \wt{p}_{k,k}^{(t)} \leq (1+e^{-T})^{-1}$ } \\
		(1+e^{-T})^{-1} & \mbox{if  $\wt{p}_{k,k}^{(t)} > (1+e^{-T})^{-1}$}. 
	\end{cases}
\end{equation*}
In what follows, we work with these truncated versions. Note that the dependence on $T$ is not indicated explicitly in this notation. 

The magnitude of $m_{\min}$  is important, as it reflects the accuracy of our estimates. This estimation error will crucially enter the performance of the graphical model based inverse covariance estimator. Under the latent variable block model, we have the following concentration result: 
\begin{lemma} \label{lemma: net_etaDiff}
	Let $\sigma^2 =\max_{k\in\wbK}\sigma_{kk}$ and $\mu_B =\max_{k\in\wbK}|\mu_k|$. Then, under \eqref{model: level 2} - \eqref{model: level1_error}, we have, for $\min(L,T) > \mu_B$, and $m_{\min}\ge 16M^2\log (nK)e^{2L}$, that
	\begin{multline} \label{eq: net_etaDiff}
		P\Big(\max_{1 \leq k \leq K \atop 1 \leq t \leq n} |\wh \eta_k^{(t)} - \eta_k^{(t)}| <8Me^{L}\sqrt{\frac{\log (nK) }{m_{\min}}}\Big)  \\
		\geq
		1 -\sqrt{\frac{2}{\pi}}\frac{nK\sigma}{\min\{L, T\}-\mu_B}\exp\Big(-\frac{(\min\{L,T\}-\mu_B)^2}{2\sigma^2}\Big) - \Big(\frac{1}{nK}\Big)^{2M^2 - 1}.
	\end{multline}
\end{lemma}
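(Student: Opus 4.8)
The plan is to bound the maximal error over all $nK$ pairs $(k,t)$ by splitting it across two high-probability events and then invoking a union bound. Writing $g(p)=\log(p/(1-p))$ for the logit, so that $\eta_k^{(t)}=g(p_{kk}^{(t)})$, $\wt\eta_k^{(t)}=g(\wt p_{kk}^{(t)})$ and $g'(p)=1/(p(1-p))$, I would work on the intersection of
\[
\mathcal A=\Big\{\max_{k,t}|\eta_k^{(t)}|\le\min\{L,T\}\Big\}
\quad\text{and}\quad
\mathcal B=\Big\{\max_{k,t}|\wt p_{kk}^{(t)}-p_{kk}^{(t)}|\le\delta\Big\},
\]
where $\delta\asymp M\sqrt{\log(nK)/m_{\min}}$ is chosen below. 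The two sources of randomness---the latent Gaussian $\bEta$ and the conditional Binomial counts $S_k^{(t)}$---are controlled on $\mathcal A$ and $\mathcal B$ respectively.

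First I would dispose of the truncation on $\mathcal A$. Since $|\eta_k^{(t)}|\le T$ there, $\eta_k^{(t)}$ lies in the truncation interval $[-T,T]$, and as the clipping map $x\mapsto\max\{-T,\min\{T,x\}\}$ is a $1$-Lipschitz projection fixing that interval, $|\wh\eta_k^{(t)}-\eta_k^{(t)}|\le|\wt\eta_k^{(t)}-\eta_k^{(t)}|$. Thus on $\mathcal A$ the truncated error is dominated by the raw logit error, and it remains to control the latter. To bound $P(\mathcal A^c)$, write $\eta_k^{(t)}=\mu_k+\epsilon_k^{(t)}$ with $\epsilon_k^{(t)}\sim N(0,\sigma_{kk})$; since $\min\{L,T\}>\mu_B\ge|\mu_k|$, the event $|\eta_k^{(t)}|>\min\{L,T\}$ forces $|\epsilon_k^{(t)}|>\min\{L,T\}-\mu_B$. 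The standard Gaussian tail bound $P(|Z|>x)\le\sqrt{2/\pi}\,(\sigma/x)e^{-x^2/(2\sigma^2)}$, together with the monotonicity of this bound in the variance (so that $\sigma^2=\max_k\sigma_{kk}$ may be used uniformly), and a union bound over the $nK$ pairs, yields exactly the first subtracted term.

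Next I would handle $\mathcal B$ and the logit error. Conditionally on $\bp^{(t)}$ the count $S_k^{(t)}$ is $\mathrm{Binomial}(m_k,p_{kk}^{(t)})$, so Hoeffding's inequality gives $P(|\wt p_{kk}^{(t)}-p_{kk}^{(t)}|>\delta)\le 2e^{-2m_k\delta^2}\le 2e^{-2m_{\min}\delta^2}$ uniformly in $p_{kk}^{(t)}$, hence unconditionally; choosing $\delta$ of order $M\sqrt{\log(nK)/m_{\min}}$ and taking a union bound over the $nK$ pairs produces the second term of the form $(1/(nK))^{2M^2-1}$. On $\mathcal A$ the constraint $|\eta_k^{(t)}|\le L$ confines $p_{kk}^{(t)}$ to $[(1+e^L)^{-1},(1+e^{-L})^{-1}]$, on which $1/(p(1-p))\le 4e^L$. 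The hypothesis $m_{\min}\ge 16M^2\log(nK)e^{2L}$ is precisely what makes $\delta\lesssim e^{-L}$, so on $\mathcal A\cap\mathcal B$ the perturbed value $\wt p_{kk}^{(t)}$, and any intermediate point $\xi$ between it and $p_{kk}^{(t)}$, stays bounded away from $0$ and $1$, keeping $1/(\xi(1-\xi))\lesssim e^L$. A mean-value expansion $|\wt\eta_k^{(t)}-\eta_k^{(t)}|=|\wt p_{kk}^{(t)}-p_{kk}^{(t)}|/(\xi(1-\xi))$ then delivers the bound $8Me^L\sqrt{\log(nK)/m_{\min}}$.

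Combining, on $\mathcal A\cap\mathcal B$ we have $\max_{k,t}|\wh\eta_k^{(t)}-\eta_k^{(t)}|<8Me^L\sqrt{\log(nK)/m_{\min}}$, and $P((\mathcal A\cap\mathcal B)^c)\le P(\mathcal A^c)+P(\mathcal B^c)$ supplies the two subtracted terms. I expect the main obstacle to be the bookkeeping in the third step: making the three scales---the truncation level $T$, the Gaussian range $L$, and the Binomial deviation $\delta$---interlock so that the logit's Lipschitz constant $1/(\xi(1-\xi))$ is genuinely $O(e^L)$ on $\mathcal A\cap\mathcal B$, and so that the constants (the factor $4$ from the range bound, the factor $M$ in $\delta$, and the power $2M^2-1$ from Hoeffding) align with the stated $8Me^L$ prefactor and the claimed probabilities. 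This is exactly where the sample-size condition $m_{\min}\ge 16M^2\log(nK)e^{2L}$ must be spent.
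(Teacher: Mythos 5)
Your proposal is correct and follows essentially the same route as the paper's proof: Hoeffding plus a union bound for the conditional binomial counts, a centered Gaussian tail bound for the event $\max_{k,t}|\eta_k^{(t)}|\le\min\{L,T\}$, the observation that truncation is a contraction on that event, and a mean-value expansion of the logit with $1/(\xi(1-\xi))\le 8e^{L}$ secured by the hypothesis $m_{\min}\ge 16M^2\log(nK)e^{2L}$. The only difference is cosmetic: the paper first treats $\bbeta=\bm{0}$ and routes the bound on $1/(\xi(1-\xi))$ through an auxiliary parameter $B\ge e^{L^2/2}$, whereas you work with $L$ directly, which is cleaner but mathematically identical.
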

\begin{remark}
	Note that the larger $\mu_B$, the larger we need to choose both $T$ and $L$. A large $T$ will cause problems, because the $\wh{p}_k^{(t)}$ then might be too close to zero or one, causing challenges by definition of $\wh \eta_k^{(t)}$. A large $L$ makes our approximation less tight. Therefore we will have to control the size of $\mu_B$ (even if $\mu_B$ is known); see assumption A1.6 and B1.5.
	
	To better understand the bound in \eqref{eq: net_etaDiff}, suppose that the number of blocks, $K$, grows with $n$ such that $K(n)=O(n^\gamma),$ for some $\gamma>0.$ While $K$ is allowed to grow with $n$, we assume that $\sigma^2$ is bounded. If we further choose $0 < \min\{L, T\}-\mu_B = \gamma\log n,$ for some $\gamma>0$, then, there exists $c>0$, such that as $n\rightarrow\infty$, 
	\begin{align*}
		\sqrt{\frac{2}{\pi}}\frac{nK\sigma}{\min\{L, T\}-\mu_B}\exp\left(-\frac{(\min\{L,T\}-\mu_B)^2}{2\sigma^2}\right) = O\left(\exp\left(-c(\log n)^2\right)\right). 
	\end{align*}
	The last term on the right-hand side of \eqref{eq: net_etaDiff} can be controlled similarly, by choosing $M = \sqrt{\left(1+(c\log n)/(\gamma+1)\right)/2}$. 
	With these choices, we obtain an approximation error of  $\max_{1 \leq k \leq K, 1 \leq t \leq n} |\wh \eta_k^{(t)} - \eta_k^{(t)}| = O(n^{-p})$ by choosing the minimum blocksize large enough
	\begin{align*}
		m_{\min}^{-1} = O\left(n^{-2p}(\log n)^{-2}e^{-2L}\right). 
	\end{align*}
\end{remark}

\subsection{Edge selection under uncertainty}  \label{subsec: uncertainty}
In order to identify the nonzero elements in $\bD$, we consider the graphical model in terms of the distribution of $\bEta$.  Recall that $\bEta \in {\mathbb R}^K$, where each component of $\bEta$ belongs to one of the $K$ blocks, thus $\wbK=\{1,\cdots,K\}$ are not only the block labels, but also the node set in the underlying graph  corresponding to the joint distribution of the $\bEta$. Using Gaussianity of $\bEta$, the set $\nb_a=\{b\in\wbK: d_{ab}\neq 0\}$  is the neighborhood of node $a\in\wbK$ of the associated graph. We follow the idea of \cite{meinshausen2006high} to convert the problem into a series of linear regression problems: for each $a\in\wbK$, 
\begin{align*} 
	\eta_a - \mu_a = \sum_{b\in\wbK\backslash\{a\}}\theta_b^a(\eta_b - \mu_b) + v_a
\end{align*}
with the residual $v_a$ independent of $\{\eta_b: 1\leq b\neq a\leq K\}$. Let $\btheta^a = (\theta^a_1, \cdots, \theta^a_K)^T \in\mathbb{R}^K$ with $\theta^a_a = 0$, then the neighborhood can also be written as $\nb_a=\{b\in\wbK: \theta_b^a\neq 0\}$.

\cite{meinshausen2006high} consider the case of $n$ i.i.d. observations of $\bEta$. 
However, under the assumption of our model, we only have observations of $\wh{\bEta}=(\wh{\eta}_1,\cdots, \wh{\eta}_K)^T$. Under our assumptions, we have available $n$ independent realizations $\wh{\bEta}^{(1)}, \cdots, \wh{\bEta}^{(n)}$. 
Let $\wh{\bH}=(\wh{\bEta}^{(1)},\cdots,\wh{\bEta}^{(n)})^T$ be the $n\times K$-matrix with columns $\wh{\bEta}_a = (\wh \eta^{(1)}_a,\ldots, \wh\eta_a^{(n)})^T$, $a \in \wbK$.
Similarly denote by $\bH = (\bEta^{(1)},\ldots,\bEta^{(n)})^T$ the $n\times K$-matrix whose rows are $n$ independent copies of $\bEta$. Its column $\bEta_a, a \in \wbK$ are vectors of $n$ independent observations of $\eta_a$. That is, we can also write $\wh{\bH} = (\wh{\bEta}_1,\cdots,\wh{\bEta}_K)$ and $\bH=(\bEta_1,\cdots,\bEta_K)$.  With this notation, for all $a\in\wbK$, 
\begin{align} \label{eq: etaV}
	\bm{\eta_a} - \mu_a\bIn=\sum_{b\in\wbK}\theta_b^a(\bm{\eta_b} - \mu_b\bIn)+\bv_a.
\end{align}
Let $\bR=\wh{\bH}-\bH$. The new matrix model can be written as
\begin{align} \label{model: matrix}
	&(\wh{\bH} - \bIn{\bmu}^T) = (\bH - \bIn{\bmu}^T)+\bR \\
	&(\bEta^{(t)} - \bmu)\sim N(\bm{0},\bSigma) \quad\text{i.i.d. for }t=1,\cdots,n.
\end{align}
Moreover, for each $a\in\wbK$, let $\bH_{-a}=\{\bEta_b: b\in\wbK\backslash\{a\}\}$, $\wh{\bH}_{-a} = \{\wh{\bEta}_b: b\in\wbK\backslash\{a\} \}$ and  $\btheta^a_{-a}=(\theta^a_1,\cdots,\theta^a_{a-1},\theta^a_{a+1},\cdots,\theta^a_K)^T$, $\bmu_{-a} = (\mu_1,\cdots,\mu_{a-1}, \mu_{a+1}, \cdots,\mu_K)^T$. We can write the above model as
\begin{equation} \label{model: mus}
	\begin{aligned}
		\wh{\bEta}_a - \mu_a\bIn = (\bH_{-a} - \bIn\bmu_{-a}^T)\btheta^a_{-a}+\bm{\xi}_a\hspace{7.5mm}\qquad \\
		(\wh{\bH}_{-a} - \bIn\bmu_{-a}^T) = (\bH_{-a} - \bIn\bmu_{-a}^T)+\bR_{-a},\qquad
	\end{aligned}
\end{equation}
where $\bm{\xi}_a=\bv_a+(\wh{\bEta}_a-\bEta_a)$ and $\bR_{-a}=\wh{\bH}_{-a}-\bH_{-a}$.  
Note that \eqref{model: mus} has a similar structure as the model considered by \cite{rosenbaum2010sparse}. The important difference is that in our situation, we do not have independence of $\bm{\xi}_a$ and $\bR_{-a}$. 

\subsection{Edge selection under uncertainty using the Lasso} 
\label{subsec: lasso}
As in \cite{meinshausen2006high}, we define our Lasso estimate $\wh{\btheta}^{a,\lambda,lasso}$ of $\btheta^a$ as
\begin{align} \label{def: thetaLasso}
	\wh{\btheta}^{a,\lambda,lasso}=\argmin\limits_{\btheta: \theta_a=0}\big(n^{-1}\|(\wh{\bEta}_a - \mu_a\bIn) - (\wh{\bH} - \bIn\bm{\mu}^T)\btheta\|_2^2+\lambda\|\btheta\|_1\big).
\end{align}
The corresponding neighborhood estimate is
\begin{align*}
	\wh{\nb}_a^{\lambda, lasso} = \big\{b\in\wbK: \wh{\theta}^{a, \lambda, lasso}_b\neq0\big\}; 
\end{align*}
and the full edge set can be estimated by 
\begin{align*}
	\wh{E}^{\lambda,\wedge,lasso} = \big\{(a,b): a\in\wh{\nb}_b^{\lambda,lasso} \text{ and } b\in\wh{\nb}_a^{\lambda,lasso}\big\}
\end{align*}
or 
\begin{align*}
	\wh{E}^{\lambda,\vee, lasso} = \big\{(a,b): a\in\wh{\nb}_b^{\lambda,lasso}\text{ or }b\in\wh{\nb}_a^{\lambda,lasso}\big\}.
\end{align*}

In order to formulate statistical guarantees for the behavior of these estimates, we need the following assumptions. On top of the assumptions from \cite{meinshausen2006high}, which are assumptions A1.1 - A1.5, we need further assumption on the underlying network. 
\begin{itemize}		
	\item[A1] \emph{Assumptions on the underlying Gaussian graph}
	\begin{enumerate}
		\item \emph{High-dimensionality}: There exists some $\gamma>0$ so that
		$K(n)=O(n^\gamma)$ for $n\rightarrow\infty$.
		
		\item \emph{Nonsingularity}: For all $ a\in\wbK$ and $n\in\mathbb{N}$, $\var(\eta_a)=1$ and there exists $\upsilon^2>0$ so that
		\begin{align*}
			\var(\eta_a|\bEta_{\wbK\backslash\{a\}})
			\geq \upsilon^2.
		\end{align*}
		
		\item \emph{Sparsity}
		\begin{enumerate}
			\item There exists some $0\leq\kappa<1$ so that $
			\max_{a\in\wbK}\left|{\nb}_a\right|=O(n^\kappa)$ for $n\rightarrow\infty$.
			\item There exists some $\vartheta<\infty$ so that for all neighboring nodes $a,b\in\wbK$ and all $n\in\mathbb{N}$, 
			\begin{align*}
				\|\btheta^{a,{\nb}_b\backslash\{a\}}\|_1\leq\vartheta.
			\end{align*}
		\end{enumerate}
		
		\item \emph{Magnitude of partial correlations}: There exist a constant $c>0$ and some $1\geq\xi>\kappa$, so that for all $(a,b)\in E$,
		\begin{align*}
			|\pi_{ab}|\geq cn^{-(1-\xi)/2},
		\end{align*}
		where $\pi_{ab}$ is the partial correlation between $\eta_a$ and $\eta_b$.
		
		\item \emph{Neighborhood stability}: There exists some $\varrho<1$ so that for all  $ a,b\in\wbK$ with $b\notin{\nb}_a$,
		\begin{align*}
			|S_a(b)|<\varrho
		\end{align*}
		where 
		\begin{align*}
			S_a(b)=\sum_{k\in{\nb}_a}\sign\left(\theta_k^{a,{\nb}_a}\right)\theta_k^{b,{\nb}_a}.
		\end{align*}
		
		\item \emph{Asymptotic upper bound on the mean:} $\mu_B(n)=o(\log n)$ for $n\rightarrow\infty$.
	\end{enumerate}
	
	\item[A2] \emph{Block size of networks}: 
	There exists constants $c>0$ and $n_0,$ such that 
	\begin{align*}
		m_{\min}(n) \ge c\cdot n^\nu
		\quad
		\text{for }n\geq n_0,
	\end{align*}
	where $\nu>\max\{4-4\xi,2-2\xi+2\kappa\}.$
\end{itemize}
The following theorem shows that, for proper choice of $\lambda=\lambda_n$, our selection procedure finds the correct neighborhoods with high probability, provided $n$ is large enough. 
\begin{theorem} \label{corollary: net_mb0_asy}
	Let assumptions A1 and  A2 hold, and assume $\bbeta$ to be known. Let $\epsilon$ be such that 
	\begin{align*}
		0 < \max\big\{\kappa, \tfrac{4-\xi-\nu}{3},\tfrac{2+2\kappa-\nu}{2} \big\} < \epsilon <\xi. 
	\end{align*}
	If, for some $d_T, d_\lambda >0,$ we have $T_n \sim d_T\log n$ and $\lambda_n\sim d_\lambda n^{-(1-\epsilon)/2}$ \footnote{For two sequence $\{a_n\}$, $\{b_n\}$ of real numbers, we write $a_n \sim b_n$ for $\frac{a_n}{b_n}\rightarrow c$ for some $0 < c <\infty$. }, 
	respectively, then there exists a constant $c>0,$ such that 
	\begin{align*}
		P(\wh{E}^{\lambda,lasso} = E)
		= 1 - O\left(\exp\left(-c(\log n)^2\right)\right)
		\quad
		\text{as }n\rightarrow\infty.
	\end{align*}
\end{theorem}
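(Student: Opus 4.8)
The plan is to carry out the node-wise neighborhood selection of \cite{meinshausen2006high} on the perturbed model \eqref{model: mus}, but to control every term created by the unobservability of $\bEta$ \emph{deterministically} through Lemma~\ref{lemma: net_etaDiff}, rather than through any cancellation in expectation. First I would reduce the edge statement to a node-wise statement: since $\bSigma$ and hence $\bD$ are symmetric, the true edge set $E$ is symmetric, so the event $\{\wh{\nb}_a^{\lambda,lasso}=\nb_a \text{ for all } a\in\wbK\}$ forces both $\wh E^{\lambda,\wedge,lasso}$ and $\wh E^{\lambda,\vee,lasso}$ to equal $E$; it therefore suffices to recover every neighborhood simultaneously. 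I would then condition on the event $\mathcal{E}$ of Lemma~\ref{lemma: net_etaDiff}, on which $\|\bR\|_\infty=\max_{k,t}|\wh\eta_k^{(t)}-\eta_k^{(t)}|\le\delta_n:=8Me^{L}\sqrt{\log(nK)/m_{\min}}$. Choosing the free lemma parameters $L,M$ of order $\log n$ and $\sqrt{\log n}$ as in the Remark, and using $T_n\sim d_T\log n$ together with $m_{\min}\ge cn^\nu$ from A2, gives $\delta_n=O(n^{-p})$ for a suitable $p>0$ while $P(\mathcal{E}^c)=O(\exp(-c(\log n)^2))$.

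On $\mathcal{E}$ I would compare the observable empirical second moments formed from $\wh{\bH}$ with their unobservable ideal counterparts formed from $\bH$. Writing $\wh{\bH}-\bH=\bR$, expansions such as $n^{-1}(\wh{\bH}-\bIn\bmu^T)^T(\wh{\bH}-\bIn\bmu^T)=n^{-1}(\bH-\bIn\bmu^T)^T(\bH-\bIn\bmu^T)+\Delta$ hold with $\|\Delta\|_\infty=O(\delta_n\sqrt{\log(nK)})$ on a Gaussian maximal-inequality event for the centered ideal data, and an analogous bound holds for the response cross term, which differs from the ideal one only through $\wh{\bEta}_a-\bEta_a$. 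The key point is that I do not try to exploit cancellation between $\bm{\xi}_a$ and $\bR_{-a}$ in \eqref{model: mus} — the independence used in \cite{rosenbaum2010sparse} is unavailable here — but instead absorb the whole discrepancy into a uniform $O(\delta_n)$ bound, up to logarithmic factors. The admissible range of $\epsilon$ and the lower bound $\nu>\max\{4-4\xi,2-2\xi+2\kappa\}$ are exactly what make this discrepancy of smaller order than both the tuning level $\lambda_n\sim n^{-(1-\epsilon)/2}$ and the minimal partial-correlation scale $n^{-(1-\xi)/2}$, after the polynomial factor $e^{L}$ coming from the truncation level has been accounted for.

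With the perturbation controlled I would re-run the two halves of the argument of \cite{meinshausen2006high} on the hatted design. For correct exclusion, I would take the oracle solution supported on $\nb_a$ and verify that it satisfies the subgradient (KKT) conditions of the full perturbed Lasso \eqref{def: thetaLasso}; this reduces to showing that for every $b\notin\nb_a$ the empirical correlation of the corresponding residual with $\wh{\bEta}_b$ stays strictly below $\lambda_n$. The leading term is handled by the neighborhood-stability assumption A1.5 (which keeps $|S_a(b)|<\varrho<1$) together with Gaussian concentration of the ideal correlations as in \cite{meinshausen2006high}, while the additional $O(\delta_n)$ perturbation is absorbed into the slack $1-\varrho$. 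For correct inclusion, I would lower-bound the fitted coefficients at genuine neighbors using the partial-correlation lower bound of A1.4 and the choice $\lambda_n\sim n^{-(1-\epsilon)/2}$ with $\epsilon<\xi$, again absorbing the $O(\delta_n)$ term, so that no true edge is dropped.

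Finally I would take a union bound over the $K=O(n^\gamma)$ nodes and the finitely many events above. Each ideal failure probability (as in \cite{meinshausen2006high}) decays like $\exp(-cn^{\epsilon'})$ for some $\epsilon'>0$, which is dominated by the Lemma~\ref{lemma: net_etaDiff} contribution $P(\mathcal{E}^c)=O(\exp(-c(\log n)^2))$; a polynomial factor $K$ does not change this rate, exactly as in the Remark. Combining yields $P(\wh E^{\lambda,lasso}=E)=1-O(\exp(-c(\log n)^2))$. I expect the main obstacle to be precisely the dependence between $\bm{\xi}_a$ and $\bR_{-a}$ in \eqref{model: mus}: the whole argument must be rebuilt around the uniform deterministic bound of Lemma~\ref{lemma: net_etaDiff} instead of a mean-zero cancellation, and the bookkeeping showing that this bound is simultaneously smaller than $\lambda_n$ and than the partial-correlation threshold — which is what pins down the range of $\epsilon$ and the lower bound on $\nu$ — is where the real effort lies.
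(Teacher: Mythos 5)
Your proposal follows essentially the same route as the paper: the paper introduces the deterministic perturbation bound as an explicit assumption (C0: $|\bR|_\infty<\delta$), re-runs both halves of the Meinshausen--B\"uhlmann argument under C0 with all discrepancy terms absorbed via triangle/Cauchy--Schwarz inequalities into $O(\delta)$ corrections (Lemmas~\ref{lemma: lasso_dual}--\ref{lemma: lasso_ineq}, Theorems~\ref{thm: lasso_type1}--\ref{thm: lasso_type2}, Corollary~\ref{corollary: mb0_asy}), and then invokes Lemma~\ref{lemma: net_etaDiff} with $L,M$ of order $\log n,\sqrt{\log n}$ to verify C0 at the rate $\delta_n=O(n^{-p})$ with failure probability $O(\exp(-c(\log n)^2))$, exactly as you describe. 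Your identification of the key points — no cancellation between $\bm{\xi}_a$ and $\bR_{-a}$, absorption of the perturbation into the slack $1-\varrho$ and into the partial-correlation margin, and the dominance of the $\exp(-c(\log n)^2)$ term over the $\exp(-cn^\epsilon)$ ideal-model failures — matches the paper's proof.
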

\begin{remark}
	Assumption A2 says that the rate of increase of the minimum block size, which behaves like $\sqrt{m_{\min}},$ depends on the neighborhood size in  our graphical model, and on the magnitude of the partial correlations in the graphical model. Roughly speaking, large neighborhoods (large $\kappa$), and small partial correlations (small $\xi$), both require a large minimum block size (large $\nu$), which appears reasonable. The choice of a proper penalty parameter $\lambda_n$ also depends on these two parameters.
\end{remark}

\subsection{Edge selection with a class of Dantzig-type selectors under uncertainty} \label{subsec: ds}
In this section, we propose a novel class of Dantzig-type selectors that are iterated over all $a\in\wbK$. For a linear model as in \eqref{eq: etaV}, i.e. for fixed $a$, \cite{candes2007dantzig} introduced the Dantzig selector as a solution to the convex problem 
\begin{multline*}
	\quad
	\min\Big\{\|\btheta\|_1:\btheta\in\mathbb{R}^{K(n)}, \theta_a=0\text{ and } \\
	\Big|\frac{1}{n}\big(\bH_{-a} - \bIn\bmu_{-a}^T\big)^T\big((\bEta_a - \mu_a\bIn) - (\bH - \bIn{\bmu}^T)\btheta\big)\Big|_\infty \leq \lambda\Big\}, 
	\quad
\end{multline*}
where $\lambda\geq0$ is a  tuning parameter, and for a matrix $A =(a_{ij})$, $|\cdot|_\infty = \max_{ij}|a_{ij}|.$  Under our model, we define the Dantzig selector as a solution of the minimization problem 
\begin{multline} \label{def: thetaDs}
	\quad
	\min\Big\{\|\btheta\|_1:\btheta\in\mathbb{R}^{K(n)}, \theta_a=0\text{ and } \\
	\Big|\frac{1}{n}\big(\wh{\bH}_{-a} - \bIn\bmu_{-a}^T\big)^T\big((\wh{\bEta}_a - \mu_a\bIn) - (\wh{\bH} - \bIn\bmu^T)\btheta\big)\Big|_\infty \leq \lambda\Big\}
	\quad
\end{multline}
with $\lambda\geq0$. Moreover, when considering \eqref{model: mus}, the idea of matrix uncertainty selector (MU-selector) comes into our mind. In our setting, we define an MU-selector, a generalization of the Dantzig selector under matrix uncertainty, as a solution of the minimization problem
\begin{multline} \label{def: thetaMus}
	\quad \min\Big\{\|\btheta\|_1:\btheta\in\mathbb{R}^{K(n)}, \theta_a=0\text{ and} \\
	\Big|\frac{1}{n}\big(\wh{\bH}_{-a} - \bIn\bmu_{-a}^T\big)^T\big((\wh{\bEta}_a - \mu_a\bIn)-(\wh{\bH} - \bIn\bmu^T)\btheta\big)\Big|_\infty \leq \mu\|\btheta\|_1+\lambda\Big\}
\end{multline}
with tuning parameters $\mu\geq0$ and $\lambda\geq0$. Note that our MU-selector deals with matrix uncertainty directly, rather than replacing $\bH$ by $\wh{\bH}$ in the optimization equations like the Lasso or the Dantzig selector. What we mean by this is that our MU-selector is based on the structural equation \eqref{model: mus}, while both Lasso-based estimator and Dantzig selector are based on the linear model \eqref{eq: etaV} with the unknown $\bm{\eta}$'s simply replaced by their estimators.

Now we consider a class of Dantzig-type selectors, which can be considered as generalizations of the Dantzig selector and the MU-selector.  For each $a\in\wbK$, let the Dantzig-type selector $\wt{\btheta}^{a,\lambda,ds}$ be a solution of the optimization problem 
\begin{multline} \label{def: thetaG}
	\quad \min\Big\{\|\btheta\|_1: \btheta\in\mathbb{R}^{K(n)}: \theta_a=0\text{ and } \\
	\Big|\frac{1}{n}(\wh{\bH}_{-a} - \bIn\bmu_{-a}^T)^T\big((\wh{\bEta}_a - \mu_a\bIn)-(\wh{\bH} - \bIn\bmu^T)\btheta\big)\Big|_\infty
	\leq \lambda_{a,n}(\|\btheta\|_1)\Big\}, 
\end{multline}
where for each $n\in\mathbb{N}$, $\{\lambda_{a,n}(\cdot): a\in\wbK\}$ is a set of functions such that 
\begin{itemize}
	\item For each $n\in\mathbb{N}$ and $a\in\wbK$, $\lambda_{a,n}(\cdot)$ is an increasing function. 
	
	\item For all $n\in\mathbb{N}$, $\min_{a\in\wbK}\lambda_{a,n}(\cdot)$ is lower bounded by some constant $\lambda_n \geq 0$, i.e, for all $n\in\mathbb{N}$, there exists some $\lambda_n >0$ so that 
	\begin{align*}
		\min_{ a\in\wbK}\min_{\btheta\in\mathbb{R}^{K}: \theta_a = 0}\lambda_{a,n}(\|\btheta\|_1) \geq \lambda_n. 
	\end{align*}
	
	\item $\max_{a\in\wbK}\lambda_{a,n}(\|\btheta^a\|_1) = o(n^{-\frac{1-\xi}{2}}(\log n)^{-1})$, i.e, there exist $u_n = o(1)$ and $n_0\in\mathbb{N}$, so that, for all $n\geq n_0$, 
	\begin{align*}
		\lambda_{a,n}(\|\btheta^a\|_1) \leq u_nn^{-\frac{1-\xi}{2}}(\log n)^{-1}, \text{ for all }a\in\wbK. 
	\end{align*}
\end{itemize}
The Dantzig-type selector $\wt{\btheta}^{a,\lambda,ds}$ always exists, because the LSE $\wh{\btheta}^a$ defined as $\wh{\btheta}^a_{-a} = (\wh{\bH}_{-a} -\bIn\bmu_{-a}^T )^{+}(\hat{\bEta}_a - \mu_a\bIn)$ and $\wh{\theta}^a_{a} = 0$ belongs to the feasible set $\Theta_a$, where  
\begin{multline*}
	\Theta_a=\Big\{\btheta\in\mathbb{R}^{K(n)}: \theta_a=0\text{ and } \\
	\Big|\frac{1}{n}\big(\wh{\bH}_{-a} - \bIn\bmu_{-a}^T\big)^T\big((\wh{\bEta}_a - \mu_a\bIn)-(\wh{\bH} - \bIn\bmu^T)\btheta\big)\Big|_\infty \leq \lambda_{a,n}(\|\btheta\|_1)\Big\} 
\end{multline*}
for any $\lambda_{a,n}(\|\btheta\|_1)\geq0$. It may not be unique, however.  We will show that, similar to \cite{candes2007dantzig} and \cite{rosenbaum2010sparse}, under certain conditions, for large $n$, there exists a constant $t > 0$ such that the $l_\infty$-norm of the difference between the Dantzig-type selector $\wt{\btheta}^{a,\lambda,ds}$ and the population quantity $\btheta^a,$ can be bounded by $t\lambda_{a,n}(\|\wt{\btheta}^{a,\lambda,ds}\|_1)$ for all $a\in\wbK$ with large probability, where $t_n$ can be a constant large enough or of order $\log n$.  However, in general, sparseness cannot be guaranteed. This already has been observed in \cite{rosenbaum2010sparse}. Therefore, we consider a thresholded version of the Dantzig-type selector, which can also significantly improve the accuracy of the estimation of the sign.
Let $\wh{\btheta}^{a,\lambda,ds}\in\mathbb{R}^{K(n)}$ be defined as 
\begin{align} \label{def: thetaPd}
	\wh{\theta}^{a,\lambda,ds}_b=
	\begin{cases}
		0, & \quad b=a \\
		\wt{\theta}^{a,\lambda,ds}_bI\big(|\wt{\theta}^{a,\lambda,ds}_b| > t_n\lambda_{a,n}(\|\wt{\btheta}^{a,\lambda,ds}\|_1)\big), & \quad
		b\in\wbK\backslash\{a\}, \\
	\end{cases}
\end{align}
where $I(\cdot)$ is the indicator function, and $t_n$ is a sequence that satisfies $t_n^{-1} = o(1)$ and $t_n = O(\log n)$.  The corresponding neighborhood selector is, for all $a\in\wbK,$  defined as $\wh{\nb}_a^{\lambda,ds}=\{b\in\wbK: \wh{\theta}_b^{a,\lambda,ds}\neq 0\},$
and the corresponding full edge selector is 
\begin{align*}
	\wh{E}^{\lambda,\wedge,ds}=\big\{(a,b): a\in\wh{\nb}_b^{\lambda,ds} \text{ and } b\in\wh{\nb}_a^{\lambda,ds}\big\}
\end{align*}
or 
\begin{align*}
	\wh{E}^{\lambda,\vee,ds}=\big\{(a,b): a\in\wh{\nb}_b^{\lambda,ds} \text{ or } b\in\wh{\nb}_a^{\lambda,ds}\big\}.
\end{align*}

Similar to the Section~\ref{subsec: lasso}, in order to derive some consistency properties, we need assumption about the underlying Gaussian graph (B1), and the minimum block size in the underlying network (B2). 
\begin{itemize}
	\item[B1] \emph{Assumptions on the underlying Gaussian graph}
	\begin{enumerate}
		\item \emph{Dimensionality}: There exists $\gamma>0$ such that
		$K(n)=O(n^\gamma)$ as $n\rightarrow\infty$.
		
		\item \emph{Nonsingularity}: For all $ a\in\wbK$ and $n\in\mathbb{N}$, $\var(\eta_a)=1$ and there exists $\upsilon^2>0$ so that
		\begin{align*}
		\var(\eta_a|\bEta_{\wbK\backslash\{a\}})\geq \upsilon^2.
		\end{align*}
		
		\item \emph{Sparsity}
		\begin{enumerate}
			\item There exists
			$0\leq\kappa<1/2$, so that $
			\max_{a\in\wbK}\left|{\nb}_a\right|=O(n^\kappa)$, as $n\rightarrow\infty$.
			\item $\|\bSigma^{-1}\|_\infty = O(1)$ as $n\rightarrow\infty$.  
		\end{enumerate}
		
		\item \emph{Magnitude of partial correlations}: There exist a constant $c>0$ and $1\geq\xi>\kappa$, so that, for all $(a,b)\in E$, $|\pi_{ab}|\geq cn^{-(1-\xi)/2}$.

%
		\item \emph{Asymptotic upper bound on the mean:} $\mu_B(n)=o(\log n)$ for $n\rightarrow\infty$.
	\end{enumerate}
	
	\item[B2] \emph{Block size of networks}: $m_{\min}^{-1}(n)=O(n^{-\nu})$ with some $\nu>1-\xi+2\kappa$ for $n\rightarrow\infty$.
\end{itemize}
Here, the assumption on $m_{\min}$ (assumption B2) is weaker than that assumed for the Lasso-based estimator (assumption A2). Similar remarks as given for A2 also apply to B2 (see Remark right below Theorem~\ref{corollary: net_mb0_asy}).

Assumptions A1 and B1 are similar but not equivalent: A1.1 and B1.1, A1.2 and B1.2, A1.4 and B1.4 respectively, are exactly the same. B1.2(a)  is stronger than A1.3.(a), indicating the underlying graph should be even sparser than the graph in Section~\ref{subsec: lasso}; assumption B1 does not have an analog to A1.3.(b) and A1.5.
\begin{theorem} \label{corollary: net_ds0_asy}
	Let assumptions B1 and B2 hold, and assume $\bbeta$ is known. Let $\epsilon>0$ be such that $\xi>\epsilon>1+2\kappa-\nu$. If $T_n \sim d_T\log n$ with some $d_T>0$, and  $\lambda_n^{-1} = O(n^{\frac{1-\epsilon}{2}})$, there exists $c>0$, so that  
	\begin{align*}
		P(\wh{E}^{\lambda,ds} = E)
		= 1 - O\left(\exp\left(-c(\log n)^2\right)\right)
		\quad
		\text{as }n\rightarrow\infty.
	\end{align*}
\end{theorem}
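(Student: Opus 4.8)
The plan is to adapt the Dantzig-selector analysis of \cite{candes2007dantzig}, in the matrix-uncertainty form of \cite{rosenbaum2010sparse} and the graphical form of \cite{yuan2010high}, but to replace every step that would normally exploit independence of the regression noise and the design perturbation by the uniform, deterministic control on the approximation error furnished by Lemma~\ref{lemma: net_etaDiff} --- this is forced on us because, as noted after \eqref{model: mus}, $\bm{\xi}_a$ and $\bR_{-a}$ are dependent. Throughout I would condition on the intersection of three events. The first is the event $\mathcal{E}_0$ of Lemma~\ref{lemma: net_etaDiff}, on which $\max_{k,t}|\wh\eta_k^{(t)}-\eta_k^{(t)}|\le\delta_n:=8Me^{L}\sqrt{\log(nK)/m_{\min}}$, so that every entry of $\bR$ and of $\wh{\bEta}_a-\bEta_a$ is at most $\delta_n$; choosing $L\sim d_L\log n$ with $d_L$ small, together with $T_n\sim d_T\log n$ and $\mu_B=o(\log n)$ (B1.5), makes both explicit terms in \eqref{eq: net_etaDiff} of order $\exp(-c(\log n)^2)$, exactly as in the Remark after Theorem~\ref{corollary: net_mb0_asy}, while keeping $\delta_n=O(n^{-\nu/2+d_L}\sqrt{\log n})$. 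The second is a Gaussian-concentration event $\mathcal{E}_1$ on which, uniformly in $a\in\wbK$, the ``clean'' score $\frac1n(\bH_{-a}-\bIn\bmu_{-a}^T)^T\bv_a$ is $O(n^{-1/2}\log n)$ and $\max_{t,b}|\eta_b^{(t)}-\mu_b|=O(\sqrt{\log(nK)})$. The third is an event $\mathcal{E}_2$ on which the population Gram matrix $\bSigma_{-a,-a}$ is well approximated by $\frac1n(\bH_{-a}-\bIn\bmu_{-a}^T)^T(\bH_{-a}-\bIn\bmu_{-a}^T)$ entrywise, with deviation $O(\sqrt{\log(nK)/n})$. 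Using $K=O(n^\gamma)$ (B1.1) and sub-exponential tail bounds, each event has probability $1-O(\exp(-c(\log n)^2))$.

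First I would verify feasibility of $\btheta^a$. By the structural identity \eqref{model: mus} and $\theta^a_a=0$, the residual at $\btheta^a$ equals $\bm{\xi}_a-\bR_{-a}\btheta^a_{-a}$, so the score in \eqref{def: thetaG} at $\btheta^a$ is $\frac1n(\bH_{-a}-\bIn\bmu_{-a}^T+\bR_{-a})^T(\bm{\xi}_a-\bR_{-a}\btheta^a_{-a})$. Expanding produces the clean term, of order $n^{-1/2}\log n$, plus remainder terms each bounded deterministically on $\mathcal{E}_0\cap\mathcal{E}_1$ by a multiple of $\delta_n\sqrt{\log n}$, using $\max_{t,b}|\eta_b^{(t)}-\mu_b|=O(\sqrt{\log(nK)})$ and that $\|\btheta^a\|_1=O(1)$ (which follows from B1.3(b) and B1.2, since $\theta^a_b=-d_{ab}/d_{aa}$ with $\sum_b|d_{ab}|=O(1)$ and $d_{aa}\ge1$). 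Hence the score at $\btheta^a$ is $O(n^{-1/2}\log n+\delta_n\sqrt{\log n})$. Since the penalty satisfies $\lambda_{a,n}(\|\btheta^a\|_1)\ge\lambda_n\gtrsim n^{-(1-\epsilon)/2}$ and $\nu>1-\epsilon$ (a consequence of $\epsilon>1+2\kappa-\nu$), this dominates the score, so $\btheta^a$ lies in the feasible set $\Theta_a$. Minimality of the $\ell_1$ norm then gives $\|\wt{\btheta}^{a,\lambda,ds}\|_1\le\|\btheta^a\|_1$, and by monotonicity $\lambda_{a,n}(\|\wt{\btheta}^{a,\lambda,ds}\|_1)\le\lambda_{a,n}(\|\btheta^a\|_1)=o(n^{-(1-\xi)/2}(\log n)^{-1})$.

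Next I would establish the coordinatewise error bound $\|\wt{\btheta}^{a,\lambda,ds}-\btheta^a\|_\infty\le t_n\lambda_{a,n}(\|\wt{\btheta}^{a,\lambda,ds}\|_1)$. Writing $h=\wt{\btheta}^{a,\lambda,ds}-\btheta^a$, feasibility of both $\wt{\btheta}^{a,\lambda,ds}$ and $\btheta^a$ bounds $|\wh{\bSigma}_{-a}h_{-a}|_\infty$ by $2\lambda_{a,n}(\|\btheta^a\|_1)$, where $\wh{\bSigma}_{-a}=\frac1n(\wh{\bH}_{-a}-\bIn\bmu_{-a}^T)^T(\wh{\bH}_{-a}-\bIn\bmu_{-a}^T)$, while $\ell_1$-minimality yields the cone condition $\|h_{\nb_a^c}\|_1\le\|h_{\nb_a}\|_1$, hence $\|h\|_1\lesssim|\nb_a|\,\|h\|_\infty=O(n^{\kappa})\|h\|_\infty$. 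Decomposing $\wh{\bSigma}_{-a}=\bSigma_{-a,-a}+\bDelta$, the perturbation $\bDelta$ collects the sampling fluctuation ($|\cdot|_\infty=O(\sqrt{\log(nK)/n})$ on $\mathcal{E}_2$) and the $\bR$-induced terms ($|\cdot|_\infty=O(\delta_n\sqrt{\log n})$ on $\mathcal{E}_0\cap\mathcal{E}_1$), so that $|\bDelta h_{-a}|_\infty\lesssim(\sqrt{\log(nK)/n}+\delta_n\sqrt{\log n})\,n^\kappa\|h\|_\infty$. Inverting the restricted action of $\bSigma_{-a,-a}$ --- bounded below on the cone by the nonsingularity assumption B1.2 --- and absorbing the perturbation (which is $o(1)$ precisely because $\kappa<1/2$ by B1.3(a) and $\nu>2\kappa$) yields $\|h\|_\infty\lesssim\lambda_{a,n}(\|\wt{\btheta}^{a,\lambda,ds}\|_1)+\delta_n\sqrt{\log n}\,n^\kappa$, which I would absorb into $t_n\lambda_{a,n}(\|\wt{\btheta}^{a,\lambda,ds}\|_1)$ with $t_n=O(\log n)$. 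I expect this restricted-invertibility estimate for the \emph{doubly} perturbed Gram matrix $\wh{\bSigma}_{-a}$ to be the main obstacle: because $\bm{\xi}_a$ and $\bR_{-a}$ cannot be decoupled as in \cite{rosenbaum2010sparse}, the $\bR$-terms must be handled purely through the deterministic bound $\delta_n$, and it is the amplification of this bound by the neighborhood size $n^\kappa$ that produces the term $\delta_n\sqrt{\log n}\,n^\kappa$; requiring it to remain below the signal level $n^{-(1-\xi)/2}$ is exactly what forces $\nu>1-\xi+2\kappa$ in assumption B2 and, equivalently, makes the interval for $\epsilon$ nonempty.

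Finally, support recovery follows by thresholding. For $b\notin\nb_a$ we have $\theta^a_b=0$, so $|\wt{\theta}^{a,\lambda,ds}_b|\le\|h\|_\infty\le t_n\lambda_{a,n}(\|\wt{\btheta}^{a,\lambda,ds}\|_1)$ and the definition \eqref{def: thetaPd} sets $\wh{\theta}^{a,\lambda,ds}_b=0$, so there are no false edges; for $b\in\nb_a$, B1.4 gives $|\theta^a_b|\ge cn^{-(1-\xi)/2}$, while $t_n\lambda_{a,n}(\|\wt{\btheta}^{a,\lambda,ds}\|_1)=O(\log n)\cdot o(n^{-(1-\xi)/2}(\log n)^{-1})=o(n^{-(1-\xi)/2})$, whence $|\wt{\theta}^{a,\lambda,ds}_b|\ge|\theta^a_b|-\|h\|_\infty$ exceeds the threshold and $\wh{\theta}^{a,\lambda,ds}_b\ne0$, so there are no missed edges. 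Thus $\wh{\nb}_a^{\lambda,ds}=\nb_a$ on $\mathcal{E}_0\cap\mathcal{E}_1\cap\mathcal{E}_2$, simultaneously for all $a\in\wbK$ after a union bound over the $K=O(n^\gamma)$ nodes. Since every neighborhood is recovered exactly, the $\wedge$- and $\vee$-edge estimators both coincide with $E$, and collecting the three event probabilities gives $P(\wh{E}^{\lambda,ds}=E)=1-O(\exp(-c(\log n)^2))$.
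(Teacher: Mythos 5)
Your proposal follows essentially the same route as the paper's proof, which factors the argument into a deterministic perturbation bound (assumption C0, verified via Lemma~\ref{lemma: net_etaDiff}), feasibility of $\btheta^a$, concentration of the score and of the Gram matrix (Claims~\ref{claim: ds2} and~\ref{claim: ds3}), a lower bound on the $\ell_\infty$ cone-invertibility constant $\kappa^a_\infty(s)\geq\|\bSigma^{-1}\|_\infty^{-1}-2s\left|\bPsi-\bSigma\right|_\infty$ (Claim~\ref{claim: ds4}), and then the beta-min condition, thresholding and a union bound in Theorem~\ref{thm: ds0_asy} --- precisely your three events and four steps. The one slip is attributing the restricted lower bound on the cone to the nonsingularity assumption B1.2; it actually comes from B1.3(b), i.e.\ $\|\bSigma^{-1}\|_\infty=O(1)$, which you do invoke correctly elsewhere.
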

\begin{remark}	
	The choice of proper $\lambda_{a,n}(\cdot)$ depends on the three parameters $\xi, \kappa$ and $\nu$. However, even the best scenario does not allow for the order $\lambda_p \sim \sqrt{\frac{\log p}{n}},$ which often can be found in the literature. This stems from the fact that we have to deal with an additional estimation error (coming in through the estimation of $\boldsymbol \eta$).
\end{remark}

\subsection{Extension} \label{subsec: extension}
Here we consider the case of an unknown coefficient vector $\bbeta,$ or unknown mean $\bmu = \bX\bbeta$. 
Recall that $\bEta^{(t)}\sim N(\bmu,\bSigma),$ $t=1,\cdots,n$ are i.i.d. Given $\{\bEta^{(t)}: t=1,\cdots,n\}$, a natural way to estimate $\bmu$ is via the MLE $\wb{\bEta}=\frac{1}{n}\sum_{t=1}^{n}\bEta^{(t)}$. Recall, however, that we only have estimates $\wh{\bEta}^{(t)}, t=1,\cdots,n$, available. Using the estimates $\wh{\bEta}^{(t)}$, we estimate the underlying mean $\bmu$ by  $\wb{\wh{\bEta}}=\frac{1}{n}\sum_{t=1}^{n}\wh{\bEta}^{(t)}$. Moreover, we can estimate $\bbeta$ via $\wh{\bbeta}=\bX^{+}\wb{\wh{\bEta}}$, where $\bX^+$ is the Moore-Penrose pseudoinverse of $\bX$ (when $\rank(\bX)=L$, $\bX^+=(\bX^T\bX)^{-1}\bX^T$). In order to derive consistency properties for $\wh{\bbeta}$, assumptions on the design matrix are needed.
Theorem~\ref{thm: net_beta} below states asymptotic properties of the estimators.
\begin{theorem} \label{thm: net_beta}
	Let assumptions A1.1 (or B1.1) and A1.6 (or B1.5) hold. If $m^{-1}_{\min}=O(n^{-\nu})$ for some $\nu>0$, then, for any $b<\min\{1,\nu\}$, and fixed $\delta>0$, there exists some $c>0$ so that
	\begin{align*}
		P\big(n^{\tfrac{b-\gamma}{2}}\|\wb{\wh{\bEta}}-\bmu\|_2>\delta\big)
		=
		O\left(\exp\left(-c(\log n)^2\right)\right)
		\quad
		\text{as }n\rightarrow\infty. 
	\end{align*}
	If, moreover, the design matrix is of full rank and the singular value of $\bX$ is asymptotically upper bounded, that is, $\rank(\bX)=L$ and $\sigma_{\max}(\bX)=O(1)$, then there exists $c>0,$ so that 
	\begin{align*}
		P\big(n^{\tfrac{b-\gamma}{2}}\|\wh{\bbeta}-\bbeta\|_2>\delta\big)=O\left(\exp\left(-c(\log n)^2\right)\right)
		\quad
		\text{as }n\rightarrow\infty.
	\end{align*}
\end{theorem}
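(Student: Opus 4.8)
The plan is to split the total error into a \emph{sampling} part and an \emph{estimation} part. Writing $\wb{\bEta}=\frac1n\sum_{t=1}^{n}\bEta^{(t)}$ for the infeasible sample mean of the true log-odds, the triangle inequality gives
\[
\|\wb{\wh{\bEta}}-\bmu\|_2\;\le\;\|\wb{\wh{\bEta}}-\wb{\bEta}\|_2\;+\;\|\wb{\bEta}-\bmu\|_2,
\]
where the first term is the estimation error incurred by replacing $\bEta^{(t)}$ with $\wh{\bEta}^{(t)}$, and the second is the usual sampling fluctuation of a Gaussian mean. Accordingly I would bound $P\big(n^{(b-\gamma)/2}\|\wb{\wh{\bEta}}-\bmu\|_2>\delta\big)$ by a union bound over the two events $n^{(b-\gamma)/2}\|\wb{\wh{\bEta}}-\wb{\bEta}\|_2>\tfrac\delta2$ and $n^{(b-\gamma)/2}\|\wb{\bEta}-\bmu\|_2>\tfrac\delta2$, and control each separately.

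For the sampling error, $\wb{\bEta}-\bmu\sim N(\bm 0,\bSigma/n)$, so each coordinate $\wb\eta_k-\mu_k$ is centered Gaussian with variance $\sigma_{kk}/n\le\sigma^2/n$, where $\sigma^2=\max_k\sigma_{kk}$ is bounded (the boundedness of $\sigma^2$ used after Lemma~\ref{lemma: net_etaDiff}). A union bound over the $K$ coordinates together with the Gaussian tail and $\|\cdot\|_2\le\sqrt K\,\|\cdot\|_\infty$ gives, with $s=\tfrac{\delta}{2\sqrt K}n^{(\gamma-b)/2}$,
\[
P\big(\|\wb{\bEta}-\bmu\|_2>\tfrac\delta2 n^{(\gamma-b)/2}\big)\le 2K\exp\!\Big(-\tfrac{n s^2}{2\sigma^2}\Big)=2K\exp\!\Big(-c\,\tfrac{n^{1+\gamma-b}}{K}\Big).
\]
By A1.1 (or B1.1), $K=O(n^\gamma)$, so the exponent is at least of order $n^{1-b}$, and since $b<1$ this probability is $O\big(\exp(-c\,n^{1-b})\big)=O\big(\exp(-c(\log n)^2)\big)$; it is in fact much smaller than what the estimation error contributes.

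The estimation error is where the work lies, and it is handled entirely through Lemma~\ref{lemma: net_etaDiff}. Coordinatewise, $\big|\wb{\wh\eta}_k-\wb\eta_k\big|\le\frac1n\sum_t|\wh\eta_k^{(t)}-\eta_k^{(t)}|\le\max_{k,t}|\wh\eta_k^{(t)}-\eta_k^{(t)}|$, so $\|\wb{\wh{\bEta}}-\wb{\bEta}\|_2\le\sqrt K\,\max_{k,t}|\wh\eta_k^{(t)}-\eta_k^{(t)}|$, and on the event of Lemma~\ref{lemma: net_etaDiff} this is at most $\sqrt K\cdot 8Me^{L}\sqrt{\log(nK)/m_{\min}}$. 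Following the Remark after that lemma I would take $T_n=L_n$ with $L_n-\mu_B\sim\gamma'\log n$ for a small constant $\gamma'>0$ and $M_n\sim\sqrt{\log n}$; since $\mu_B=o(\log n)$ by A1.6 (or B1.5), this makes $e^{L_n}=O(n^{\gamma'})$ while forcing both exceptional probabilities on the right of \eqref{eq: net_etaDiff} down to $O(\exp(-c(\log n)^2))$. Using $K=O(n^\gamma)$, $M_n\sqrt{\log(nK)}=O(\log n)$ and $m_{\min}^{-1}=O(n^{-\nu})$, the scaled deterministic bound becomes $n^{(b-\gamma)/2}\|\wb{\wh{\bEta}}-\wb{\bEta}\|_2=O\big(n^{\,b/2+\gamma'-\nu/2}\log n\big)$. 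The crux — and the main obstacle — is making this exponent negative, which is exactly where the hypothesis $b<\nu$ is used: since $(\nu-b)/2>0$ I would choose $\gamma'\in\big(0,(\nu-b)/2\big)$, so that $b/2+\gamma'-\nu/2<0$ and the bound is eventually below $\tfrac\delta2$. The tension is that the approximation factor $e^{L}$ grows polynomially, so $L$ (and $M$) must be large enough to push the exceptional probabilities in \eqref{eq: net_etaDiff} to $\exp(-c(\log n)^2)$, yet small enough that $n^{(b-\gamma)/2}\sqrt K\,e^{L}/\sqrt{m_{\min}}$ still vanishes; one checks that the chosen $\gamma'$ is admissible in the lemma (the requirements $\min(L_n,T_n)>\mu_B$ and $m_{\min}\ge16M_n^2\log(nK)e^{2L_n}$ hold for large $n$ because $\nu-2\gamma'>0$ lets $n^\nu$ dominate $(\log n)^2 n^{2\gamma'}$). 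Combining the two parts by a union bound then yields $P\big(n^{(b-\gamma)/2}\|\wb{\wh{\bEta}}-\bmu\|_2>\delta\big)=O(\exp(-c(\log n)^2))$.

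For the statement about $\wh{\bbeta}=\bX^+\wb{\wh{\bEta}}$, full column rank $\rank(\bX)=L$ gives $\bX^+\bX=\bI$, whence $\bX^+\bmu=\bX^+\bX\bbeta=\bbeta$ and therefore $\wh{\bbeta}-\bbeta=\bX^+(\wb{\wh{\bEta}}-\bmu)$. Hence $\|\wh{\bbeta}-\bbeta\|_2\le\|\bX^+\|_{\mathrm{op}}\,\|\wb{\wh{\bEta}}-\bmu\|_2$, and the singular-value condition on $\bX$ guarantees $\|\bX^+\|_{\mathrm{op}}=O(1)$, so the scaling factor $n^{(b-\gamma)/2}$ and the tail bound just derived transfer directly from $\wb{\wh{\bEta}}$ to $\wh{\bbeta}$, completing the argument.
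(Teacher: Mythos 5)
Your proposal is correct and follows essentially the same route as the paper: the triangle-inequality split into the Gaussian sampling fluctuation of $\wb{\bEta}$ (handled by a union bound and Gaussian tails, giving the negligible $O(\exp(-cn^{1-b}))$ term) plus the estimation error controlled via Lemma~\ref{lemma: net_etaDiff}, followed by the pseudoinverse argument for $\wh{\bbeta}$. The only cosmetic difference is that the paper first proves the bound conditional on the event $|\bR|_\infty<\delta$ (assumption C0) and discharges that event with Lemma~\ref{lemma: net_etaDiff} at the end, whereas you plug the lemma in directly with explicit choices of $L_n, T_n, M_n$ — your version is, if anything, more explicit about why $b<\nu$ is needed.
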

Next we consider the estimation of the edge set $E$ based on $\bD=\bSigma^{-1}$. We write $\bEta-\bmu\sim N(\bm{0},\bSigma)$ and consider $(\wh{\bEta}^{(t)}-\wb{\wh{\bEta}})_{t=1,\cdots,n}$ as the observations. We estimate the edge set in the same way as described in Section~\ref{subsec: lasso}, but replace $\wh{\bEta}_a - \mu_a\bIn$ by $\wh{\bEta}_a-\wb{\wh{\eta}}_a\bIn$ and replace $\wh{\bH}$ by $\wh{\bH}-\bIn\wb{\wh{\bEta}}^{T}$ in \eqref{def: thetaLasso}, where $\wb{\wh{\eta}}_a=\frac{1}{n}\sum_{t=1}^n\wh{\eta}_a^{(t)}$ and $\wb{\wh{\bEta}}$ is as above. 
The following consistency result parallels Theorem~\ref{corollary: net_mb0_asy} and Theorem~\ref{corollary: net_ds0_asy}, but stronger assumption are needed to control the additional estimate error. 
\begin{corollary} \label{corollary: net_mb_asy}
	Let assumptions A1 - A2 hold with $\xi>3/4$, and let $\epsilon$ be such that 
	\begin{align*}
		\max\{\kappa+1/2, \tfrac{3-\xi}{3}, \tfrac{4-\xi-\nu}{3},\tfrac{2+2\kappa-\nu}{2}\} < \epsilon < \xi.
	\end{align*}
	Suppose that $T_n \sim d_T\log n$, for $d_T>0,$ and that the penalty parameter satisfies $\lambda_n\sim d_\lambda n^{-(1-\epsilon)/2}$ for some $d_\lambda>0$. Then, there exists $c>0,$ so that 
	\begin{align*}
		P(\wh{E}^{\lambda,lasso}=E)=1- O\left(\exp\left(-c(\log n)^2\right)\right)
		\quad
		\text{as }n\rightarrow\infty.
	\end{align*}
\end{corollary}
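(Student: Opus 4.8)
The plan is to show that using the estimated centering $\wb{\wh{\bEta}}$ in place of the known mean $\bmu$ is an asymptotically negligible perturbation of the known-mean procedure, so that the neighborhood-selection guarantee of Theorem~\ref{corollary: net_mb0_asy} carries over. The only change in the estimator is that each $\mu_a$ is replaced by $\wb{\wh{\eta}}_a$ and $\bmu$ by $\wb{\wh{\bEta}}$. Write $\bDelta = \wb{\wh{\bEta}} - \bmu$ for the centering error, with $a$-th component $\Delta_a = \wb{\wh{\eta}}_a - \mu_a$, and split it as $\bDelta = \frac{1}{n}\sum_t(\bEta^{(t)} - \bmu) + \frac{1}{n}\sum_t(\wh{\bEta}^{(t)} - \bEta^{(t)})$. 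The first (sampling) part is an average of $n$ i.i.d.\ centered Gaussians, hence of order $n^{-1/2}$ per coordinate with Gaussian tails; the second (estimation) part is of order $n^{-\nu/2}$ up to logarithmic factors by Lemma~\ref{lemma: net_etaDiff}. Combining these with a union bound over the $K=O(n^\gamma)$ coordinates, Theorem~\ref{thm: net_beta} and its proof give $\|\bDelta\|_\infty = O\big(n^{-\min\{1,\nu\}/2}(\log n)^{1/2}\big)$ with probability $1-O(\exp(-c(\log n)^2))$.

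The second step is a purely algebraic reduction exploiting that re-centering makes every regressor orthogonal to $\bIn$. Using the identities $\frac{1}{n}(\wh{\bH} - \bIn\bmu^T)^T\bIn = \bDelta$, $\frac{1}{n}\bIn^T(\wh{\bEta}_a - \mu_a\bIn) = \Delta_a$ and $\frac{1}{n}\bIn^T\bIn = 1$, a direct expansion gives
\begin{align*}
	\tfrac{1}{n}(\wh{\bH} - \bIn\wb{\wh{\bEta}}^T)^T(\wh{\bH} - \bIn\wb{\wh{\bEta}}^T) &= \tfrac{1}{n}(\wh{\bH} - \bIn\bmu^T)^T(\wh{\bH} - \bIn\bmu^T) - \bDelta\bDelta^T, \\
	\tfrac{1}{n}(\wh{\bH} - \bIn\wb{\wh{\bEta}}^T)^T(\wh{\bEta}_a - \wb{\wh{\eta}}_a\bIn) &= \tfrac{1}{n}(\wh{\bH} - \bIn\bmu^T)^T(\wh{\bEta}_a - \mu_a\bIn) - \Delta_a\bDelta.
\end{align*}
Hence the empirical Gram matrix differs from the known-mean one only by the rank-one term $-\bDelta\bDelta^T$, and the design--response cross term only by $-\Delta_a\bDelta$; equivalently the unknown-mean Lasso satisfies the known-mean KKT system with the single extra term $\bDelta(\bDelta^T\wh{\btheta}^{a} - \Delta_a)$. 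In particular, the constant-vector part of the residual induced by re-centering (a multiple of $\bIn$) is annihilated against the centered regressors and never enters the KKT conditions.

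The third step re-runs the neighborhood-stability (no-false-inclusion) and signal-versus-noise (no-false-exclusion) arguments of Theorem~\ref{corollary: net_mb0_asy} with these two rank-one terms inserted, and checks that each added contribution stays below the penalty $\lambda_n\sim n^{-(1-\epsilon)/2}$ and below the partial-correlation signal $cn^{-(1-\xi)/2}$. Using $\|\bDelta\|_\infty = O(n^{-\min\{1,\nu\}/2})$, the $\ell_1$-bound $\|\wh{\btheta}^{a}\|_1 = O(n^{\kappa})$ available from the known-mean analysis, and the lower bound $\upsilon^2$ on the conditional variances, each added term is a product of a factor of $\bDelta$ with quantities already controlled in the known-mean proof. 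The decisive point is that the sampling part of $\bDelta$ is an $n^{-1/2}$ average irrespective of the true block size, so it behaves exactly like the known-mean estimation error would with block-size exponent $1$. Consequently the two constraints of Theorem~\ref{corollary: net_mb0_asy} that involve $\nu$, namely $\frac{2+2\kappa-\nu}{2}$ and $\frac{4-\xi-\nu}{3}$, acquire their $\nu=1$ companions $\frac{1+2\kappa}{2}=\kappa+\frac{1}{2}$ and $\frac{3-\xi}{3}$, which are exactly the enlarged lower bounds on $\epsilon$ in the statement; feasibility of $\frac{3-\xi}{3}<\epsilon<\xi$ is precisely what forces $\xi>3/4$.

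The main obstacle is that $\bDelta$ is built from the same $n$ networks as the regressors and residuals, so the clean independence used in Theorem~\ref{corollary: net_mb0_asy} is lost and the relevant cross terms $\bDelta^T\wh{\btheta}^{a}$, $\frac{1}{n}\bIn^T\bm{\xi}_a$ and $\frac{1}{n}\bIn^T\bR_{-a}$ are correlated with the design. These must be controlled by Gaussian and sub-Gaussian concentration yielding the $\exp(-c(\log n)^2)$ tails supplied by Lemma~\ref{lemma: net_etaDiff} and Theorem~\ref{thm: net_beta}, uniformly over the $K=O(n^\gamma)$ neighborhoods via a union bound. Once the two rank-one perturbations are shown to be dominated in this way, the selection event reduces to that of the known-mean case and the conclusion $P(\wh{E}^{\lambda,lasso}=E)=1-O(\exp(-c(\log n)^2))$ follows.
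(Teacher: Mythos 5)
Your overall strategy is the same as the paper's: reduce to the known-mean result by treating the centering error $\bDelta=\wb{\wh{\bEta}}-\bmu$ as extra design uncertainty, split it into an $n^{-1/2}$ Gaussian sampling part and an $n^{-\nu/2}$ estimation part (this is Theorem~\ref{thm: net_beta} together with inequality \eqref{Pineq: maxMu}), and read off the strengthened conditions $\epsilon>\kappa+1/2$ and $\epsilon>(3-\xi)/3$ as the ``$\nu=1$ companions'' of the two $\nu$-dependent constraints, with $\xi>3/4$ forced by feasibility. That bookkeeping is exactly the paper's. The paper's implementation is more economical, though: it simply notes that replacing $\mu_a$ by $\wb{\wh{\eta}}_a$ turns the uncertainty matrix $\bR$ into $\wt{\bR}=\bR-\bIn\bDelta^T$ (display \eqref{def: tildeR}), bounds $|\wt{\bR}|_\infty\le 2\delta+\max_k|\wb{\eta}_k-\mu_k|$, and then re-invokes Corollary~\ref{corollary: mb0_asy} verbatim with this inflated $\delta$; no KKT conditions are re-derived.

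Where your write-up is internally inconsistent is the middle step. Your rank-one identities are correct, and they do show that the Gram matrix and the design--response cross term are perturbed only by $-\bDelta\bDelta^T$ and $-\Delta_a\bDelta$, i.e.\ quadratically in $\|\bDelta\|_\infty$. Taken at face value this would make the $n^{-1/2}$ sampling part of $\bDelta$ harmless (quadratic contributions of size $O(\|\bDelta\|_\infty^2\lambda_n^{-1})=o(\lambda_n)$) and would suggest that the conditions of Theorem~\ref{corollary: net_mb0_asy} need no strengthening at all --- contradicting the constraints you then (correctly) impose. The resolution is that the proofs of Lemma~\ref{lemma: lasso_sign} and Theorem~\ref{thm: lasso_type1} do not run on the Gram matrix alone: they use column-level bounds such as $\|\wh{\bEta}_k-\bEta_k\|_2\le\sqrt{n}\delta$ and inner products of residuals with $\wh{\bw}_b^{\bot}$, $\wh{\bv}_b$ and $\wt{\bv}_b$, and the direction $\bIn$ is \emph{not} annihilated against those vectors. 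So the perturbation that actually has to be controlled is the design-level one, $\wt{\bR}$, in which $\bDelta$ enters linearly --- which is what your final constraint count implicitly assumes. Either drop the rank-one detour and argue via $|\wt{\bR}|_\infty$ as the paper does, or commit to the quadratic analysis and re-verify every step of the known-mean proof under the recentered decomposition; as sketched, the two halves of your argument do not match.
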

\begin{corollary} \label{corollary: net_ds_asy}
	Let assumptions B1 - B2 hold with $\xi>2\kappa$. Let $\epsilon$ be such that  $\xi>\epsilon>\max\{2\kappa, 2\kappa+1-\nu\}$. If $T_n \sim d_T\log n,$ for some $d_T>0$, and  $\lambda_n^{-1} = O(n^{\frac{1-\epsilon}{2}})$, there exists $c>0$ so that  
	\begin{align*}
		P(\wh{E}^{\lambda,ds}=E)=1- O\left(\exp\left(-c(\log n)^2\right)\right)
		\quad
		\text{as }n\rightarrow\infty.
	\end{align*}
\end{corollary}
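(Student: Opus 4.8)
The plan is to treat the unknown-mean selector as a perturbation of the known-mean Dantzig-type selector of Theorem~\ref{corollary: net_ds0_asy}, where the perturbation is driven entirely by $\bDelta := \wb{\wh{\bEta}} - \bmu$, a quantity already controlled by Theorem~\ref{thm: net_beta}. Write $g(\btheta)$ for the left-hand side of the constraint in \eqref{def: thetaG} (evaluated with the true mean $\bmu$), and $\wt g(\btheta)$ for the version actually used in the corollary, in which every occurrence of $\bmu$ is replaced by $\wb{\wh{\bEta}}$. The first task is a purely algebraic reduction of $\wt g(\btheta)$ to $g(\btheta)$ plus an explicit $\bDelta$-term.

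Substituting $\wh{\bH} - \bIn\wb{\wh{\bEta}}^T = (\wh{\bH} - \bIn\bmu^T) - \bIn\bDelta^T$ (and the analogous identities for the $-a$ block and for the response $\wh{\bEta}_a - \wb{\wh\eta}_a\bIn$) and using the two elementary identities $\frac{1}{n}(\wh{\bH}_{-a} - \bIn\bmu_{-a}^T)^T\bIn = \bDelta_{-a}$ and $\frac{1}{n}\bIn^T\big((\wh{\bEta}_a - \mu_a\bIn) - (\wh{\bH} - \bIn\bmu^T)\btheta\big) = \Delta_a - \bDelta^T\btheta$, all first-order terms in $\bDelta$ cancel and one is left with
\[ \wt g(\btheta) - g(\btheta) = -(\Delta_a - \bDelta^T\btheta)\,\bDelta_{-a}. \]
Hence $\|\wt g(\btheta) - g(\btheta)\|_\infty \le (|\Delta_a| + \|\bDelta\|_\infty\|\btheta\|_1)\,\|\bDelta\|_\infty \le \|\bDelta\|_\infty^2\,(1 + \|\btheta\|_1)$. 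The decisive structural fact is that the mean-estimation error enters only \emph{quadratically} in $\bDelta$, modulated by $\|\btheta\|_1$; this is what makes the added uncertainty a genuinely higher-order effect and is the reason one can hope to recover the same rate as in the known-mean case.

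Next I would control $\|\bDelta\|_\infty$ uniformly in $a\in\wbK$. Rather than the $\ell_2$ statement of Theorem~\ref{thm: net_beta}, I would use the per-coordinate decomposition underlying it, $\Delta_a = \frac{1}{n}\sum_t(\eta_a^{(t)} - \mu_a) + \frac{1}{n}\sum_t(\wh\eta_a^{(t)} - \eta_a^{(t)})$. A Gaussian maximal inequality with a union bound over $K = O(n^\gamma)$ coordinates bounds the first sum by $O(n^{-1/2}\sqrt{\log n})$, while Lemma~\ref{lemma: net_etaDiff} bounds the second, uniformly in $t$ and $a$, by $8Me^{L}\sqrt{\log(nK)/m_{\min}} = O(n^{-\nu/2 + o(1)})$, once $T_n \sim d_T\log n$, $L$ is chosen with $\mu_B < L = o(\log n)$ (so that $e^L = n^{o(1)}$ by B1.5), and $m_{\min}^{-1} = O(n^{-\nu})$ from B2. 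This gives $\|\bDelta\|_\infty = O_P(n^{-\min\{1,\nu\}/2 + o(1)})$ on an event of probability $1 - O(\exp(-c(\log n)^2))$, matching the probability budget of the target statement.

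Finally I would re-run the proof of Theorem~\ref{corollary: net_ds0_asy} while carrying the extra term $\|\bDelta\|_\infty^2(1+\|\btheta\|_1)$ through its three stages: (i) feasibility of the truth $\btheta^a$, where $\|\btheta^a\|_1 = O(n^\kappa)$ by the sparsity and bounded-precision assumption B1.3, so the perturbation is $O(n^{\kappa-\min\{1,\nu\}+o(1)})$ and must be absorbed into $\lambda_n^{-1} = O(n^{(1-\epsilon)/2})$; (ii) the restricted-eigenvalue / cone step, where the centering also perturbs the Gram matrix by $O(\|\bDelta\|_\infty)$ but, by the nonsingularity B1.2, leaves an $O(1)$ restricted-eigenvalue constant for large $n$, so the same order transfers to $\|\wt{\btheta}^{a,\lambda,ds} - \btheta^a\|_\infty$; and (iii) the thresholding in \eqref{def: thetaPd}, which requires the total $\ell_\infty$ error to stay below the signal floor $|\pi_{ab}| \ge cn^{-(1-\xi)/2}$ from B1.4. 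Balancing the quadratic perturbation against both $\lambda_n$ and this floor is precisely what forces $\xi > 2\kappa$ and the enlarged range $\epsilon > \max\{2\kappa,\,2\kappa + 1 - \nu\}$; the remaining bookkeeping is identical to Theorem~\ref{corollary: net_ds0_asy}, and a union bound over $a\in\wbK$ preserves the $1 - O(\exp(-c(\log n)^2))$ rate. The hard part will be that $\wb{\wh{\bEta}}$ is a function of all $n$ networks and is therefore \emph{not} independent of the centered Gram matrix or of the residuals $\bxi_a$, which compounds the $\bxi_a$–$\bR_{-a}$ dependence already flagged after \eqref{model: mus}; the quadratic-in-$\bDelta$ structure derived above is exactly what lets one dispense with independence and instead bound every perturbation deterministically on a single high-probability event.
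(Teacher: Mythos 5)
Your proposal is correct, but it takes a genuinely different route from the paper. The paper handles the unknown mean by absorbing the re-centering into the matrix-uncertainty framework: it replaces $\bR$ by $\wt{\bR}=(\wh{\bH}-\bIn\wb{\wh{\bEta}}^{T})-(\bH-\bIn\bmu^{T})$, bounds $|\wt{\bR}|_\infty\le 2\delta+\delta_\mu$ using the Gaussian tail bound \eqref{Pineq: maxMu}, and then invokes Theorem~\ref{thm: ds0_asy} as a black box with this enlarged uncertainty level; the mean-estimation error $\delta_\mu\asymp n^{-1/2}\log n$ therefore enters at \emph{first} order, and it is this bookkeeping (via the requirements $\delta=O(n^{-p})$ with $p>\kappa+(1-\xi)/2$ and $\epsilon>2\kappa-2p+1$, applied with $p=\min\{\nu,1\}/2$) that yields $\xi>2\kappa$ and $\epsilon>\max\{2\kappa,2\kappa+1-\nu\}$. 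Your identity $\wt g(\btheta)-g(\btheta)=-(\Delta_a-\bDelta^T\btheta)\,\bDelta_{-a}$ with $\bDelta=\wb{\wh{\bEta}}-\bmu$ is correct (it is the usual sample-centering identity), so the mean estimation really contributes only a quadratic-in-$\bDelta$ correction; this is strictly sharper than the paper's treatment, at the price of reopening the proof of Theorem~\ref{thm: ds0_asy} to push the correction through feasibility, the cone step and the thresholding rather than citing the theorem verbatim. Since the stated hypotheses imply everything either route needs, your argument does prove the corollary. Three small repairs: to reach failure probability $O(\exp(-c(\log n)^2))$ after a union bound over $K=O(n^\gamma)$ coordinates you must take the Gaussian part of $\|\bDelta\|_\infty$ of order $n^{-1/2}\log n$, not $n^{-1/2}\sqrt{\log n}$; your claim that balancing the quadratic term ``forces'' $\xi>2\kappa$ and $\epsilon>\max\{2\kappa,2\kappa+1-\nu\}$ is a misattribution --- under your own bound the perturbation at the truth is $O\bigl(n^{-\min\{1,\nu\}+o(1)}(1+\|\btheta^a\|_1)\bigr)=o(\lambda_n)$ under much weaker conditions, so the stated hypotheses are merely sufficient here and actually originate from the paper's coarser first-order argument; and there is no separate first-order Gram-matrix perturbation to track in your step (ii), since your identity has already cancelled it exactly --- reintroducing it would double count.
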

\begin{example} \label{eg: all1}
	Let $\kappa=0$ and $\xi=1$, that is, the number of blocks is finite, and the partial correlations are lower bounded for the graphcial model. If, in addition, for some $\nu > 0,$ $m_{\min}^{-1}(n)=O(n^{\nu})$ as $n\rightarrow\infty$, then Corollaries~\ref{corollary: net_mb_asy} and \ref{corollary: net_ds_asy}, respectively, apply in the following scenarios: 
	\begin{itemize}
		\item \textbf{The Lasso}: If assumption A1 - A2 hold: Choose the tuning parameter $\lambda_n\sim dn^{-(1-\epsilon)/2}$ with any $\epsilon$ satisfying $1>\epsilon>\max\{0,1-\nu/2\}$ in case $\bmu$ is known, and $\epsilon$ satisfying $1>\epsilon>\max\{2/3,1-\nu/2\}$ for $\bmu$ unknown.
		
		\item \textbf{The Dantzig-type selector}: If assumptions B1 - B2 hold, whether $\bmu$ is known or unknown, 
		choose $\max_{a\in\wbK}\lambda_{a,n}^{-1}(\btheta^a) = O(n^{\frac{1-\epsilon}{2}})$ with any positive $\epsilon$ satisfying $1>\epsilon>1-\nu$. In particular, for \\[-15pt]
		\begin{itemize}[label=\textasteriskcentered]
			\item \textbf{the Dantzig selector}: $\lambda_{a,n}(\|\btheta\|_1) = dn^{-\frac{1-\epsilon}{2}}$ for any $d>0$. Problem \eqref{def: thetaG} becomes  \eqref{def: thetaDs} with tuning parameter $\lambda = dn^{-\frac{1-\epsilon}{2}}$. 
			
			\item \textbf{the MU-selector}: $\lambda_{a,n}(\|\btheta\|_1) = dn^{-\frac{1-\epsilon}{2}}\|\btheta\|_1+dn^{-\frac{1-\epsilon}{2}}$ for any $d>0$. Problem \eqref{def: thetaG} becomes \eqref{def: thetaMus} with tuning parameter $\mu=dn^{-\frac{1-\epsilon}{2}}$ and $\lambda = dn^{-\frac{1-\epsilon}{2}}$. 
		\end{itemize}
	\end{itemize}
\end{example}

\subsection{Selection of penalty parameters in finite samples.} \label{subsec: finite}
The results above only show that consistent edge selection is possible with the Lasso and the Dantzig-type selector in a high-dimensional setting. However, we still have not given a concrete way to choose the penalty parameter for a given data set. In this section, we discuss the choice of tuning parameter for finite $n$ for the following estimation methods: 
\begin{itemize}
	\item The Lasso
	\item The Dantzig-type selectors:
	\begin{itemize}
		\item the Dantzig selector: $\lambda(\|\btheta\|_1) = \lambda$
		\item the MU-selector: $\lambda(\|\btheta\|_1) = \lambda\|\btheta\|_1 + \lambda$
	\end{itemize}
\end{itemize}

\cite{meinshausen2006high} discussed a data-driven choice of the penalty parameter of the Lasso for Gaussian random vectors. Our data are not Gaussian, however. Moreover, according to our numerical studies, 
the choice suggested by Meinshausen and B\"{u}hlmann tends to result in a very sparse graph, which goes along with a very small type I error. Another natural idea is choosing the penalty parameter via cross-validation. However, \cite{meinshausen2006high} already state that the choice $\lambda_{\rm oracle}$ gives an inconsistent estimator, and $\lambda_{\rm cv}$ is an estimate of $\lambda_{\rm oracle}$. So the cross-validation approach is also not recommended. Instead we here consider the following two-stage procedure: for each $a\in\wbK$, let
\begin{equation} \label{def: theta2stage}
	\wh{\theta}_b^{a, \lambda, \tau} = 
	\begin{cases}
		\wt{\theta}_b^{a, \lambda}I\big(|\wt{\theta}_b^{a, \lambda}|/\max_{k\in\wbK\backslash\{a\}}|\wt{\theta}_b^{a, \lambda}| > \tau \big) & \mbox{if  $\wt{\btheta}^{a, \lambda} \neq \bm{0}$ }\\
		
		0 & \mbox{if  $\wt{\btheta}^{a, \lambda} = \bm{0}$ }, 
	\end{cases}
\end{equation}
where $\wt{\btheta}^{a, \lambda}$ is obtained by solving either \eqref{def: thetaLasso}, \eqref{def: thetaDs} or \eqref{def: thetaMus} with $\mu = \lambda$. Such procedures have also been used in \cite{rosenbaum2010sparse} and \cite{zhou2011high}. However, the use of $\max_{k\in\wbK\backslash\{a\}}|\wh{\theta}_b^{a, \lambda}|$ in the truncation is novel. By using $\max_{k\in\wbK\backslash\{a\}}|\wh{\theta}_b^{a, \lambda}|$, we have $\tau \in [0, 1]$, making the tuning parameter $\tau$ more standardized. 
Note that when  $\wh{\theta}_b^{a, \lambda}$ is a Dantzig-type selector, then, under the assumptions in Section~\ref{sec: est}, and for large $n$, \eqref{def: theta2stage} is equivalent to \eqref{def: thetaPd}. 

For the choice of $\lambda$ and $\tau$, we follow a similar idea as in \cite{zhou2011high}, but with some modification: for each $a\in\wbK$, we select $\lambda_a$ via cross-validation to minimize the squared error prediction loss for $a$-th regression. After all $\lambda_a$,  $a\in\wbK$, are chosen, we select $\tau$ via BIC based on a Gaussian assumption: 
\begin{align*}
	{BIC}(\bD) = -2l_n(\bD) + \log(n)\dim(\bD), 
\end{align*}
where $l_n(\bD)$ is the $n$-sample Gaussian log-likelihood and $\dim(\bD) = $ number of free parameters. Note that we do not have a nice form of the likelihood, so we use the Gaussian likelihood instead. 

\section{Simulation study} \label{sec: num}
In this section, we mainly study the finite sample behavior of the three estimation methods mentioned in Section~\ref{subsec: finite}, that is, 
\begin{itemize}
	\item the Lasso; 
	\item the Dantzig selector; 
	\item the MU-selector with $\mu = \lambda$.
\end{itemize}

\subsection{Finite-sample performance as a function of the penalty parameter.} \label{subsec: finite_performance}
Here we consider the methods proposed in Section~\ref{subsec: lasso} and \ref{subsec: ds} with an AR(1) type covariance structure $\bSigma_{K\times K} = \{\rho^{|i-j|}\}_{i, j \in \wbK}$ with $\rho = 0.2, 0.5$ and $0.8$. In this setup, $d_{ij} = 0$ if and only if $|i - j| > 1$. The minimum blocksize in our simulation is set to be $m_{\min} = 100$. We consider the following choices of the sample size and number of blocks: 
\begin{itemize}
	\item $n = 20$ with $K = 15$; 
	\item $n = 100$ with $K = 15, 30, 50, 80, 100$ and $150$. 
\end{itemize}
We only present the results for $n = 20, K = 15$ and $n=100, K=150.$ The rest of the results can be found in the supplementary material. 
Figures~\ref{fig: roc1} - \ref{fig: roc7} show ROC-curves; average error rates (total error, type \uppercase\expandafter{\romannumeral1} error and type \uppercase\expandafter{\romannumeral2} error) as functions of the tuning parameter $\lambda$ are shown in figures~\ref{fig: rate1}  and \ref{fig: rate7}.  
The shown curves are color-coded: Lasso: red, Dantzig selector: blue and MU-selector: green. $\lambda_{\rm opt}$ is the tuning parameter corresponding to the total (overall) minimum error rate.
\begin{figure} [H]
	\centering
	\includegraphics[width=1\linewidth]{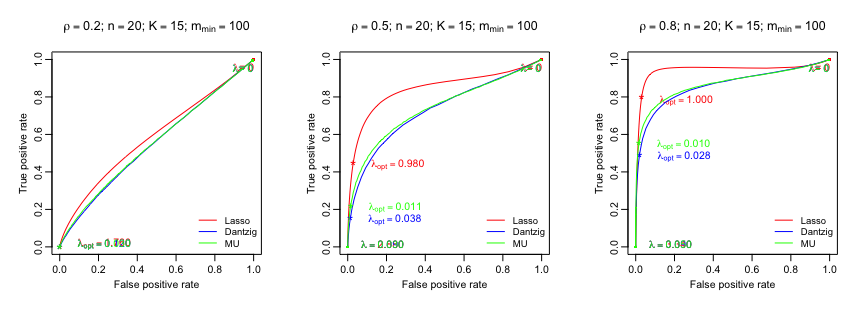} 
	\vspace*{-1cm}
	\caption{\small ROC curves comparing the three proposed methods for $K = 15$ and $n = 20$}
	\label{fig: roc1} 
\end{figure}
\begin{figure} [H]
	\centering
	\includegraphics[width=1\linewidth]{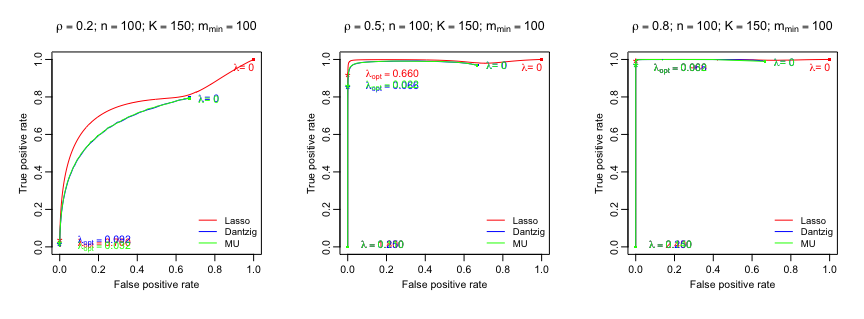} 
	\vspace*{-1cm}
	\caption{\small ROC curves comparing the three proposed methods for $K = 150$ and $n = 100$}
	\label{fig: roc7} 
\end{figure}
We can see that the value of $\rho$ is important. The performance of all the three methods improves as $\rho$ grows. This can be understood by the fact that it determines the size of the partial correlations (cf. assumption A1.4).  
\begin{figure} [H]
	\centering
	\includegraphics[width=1\linewidth]{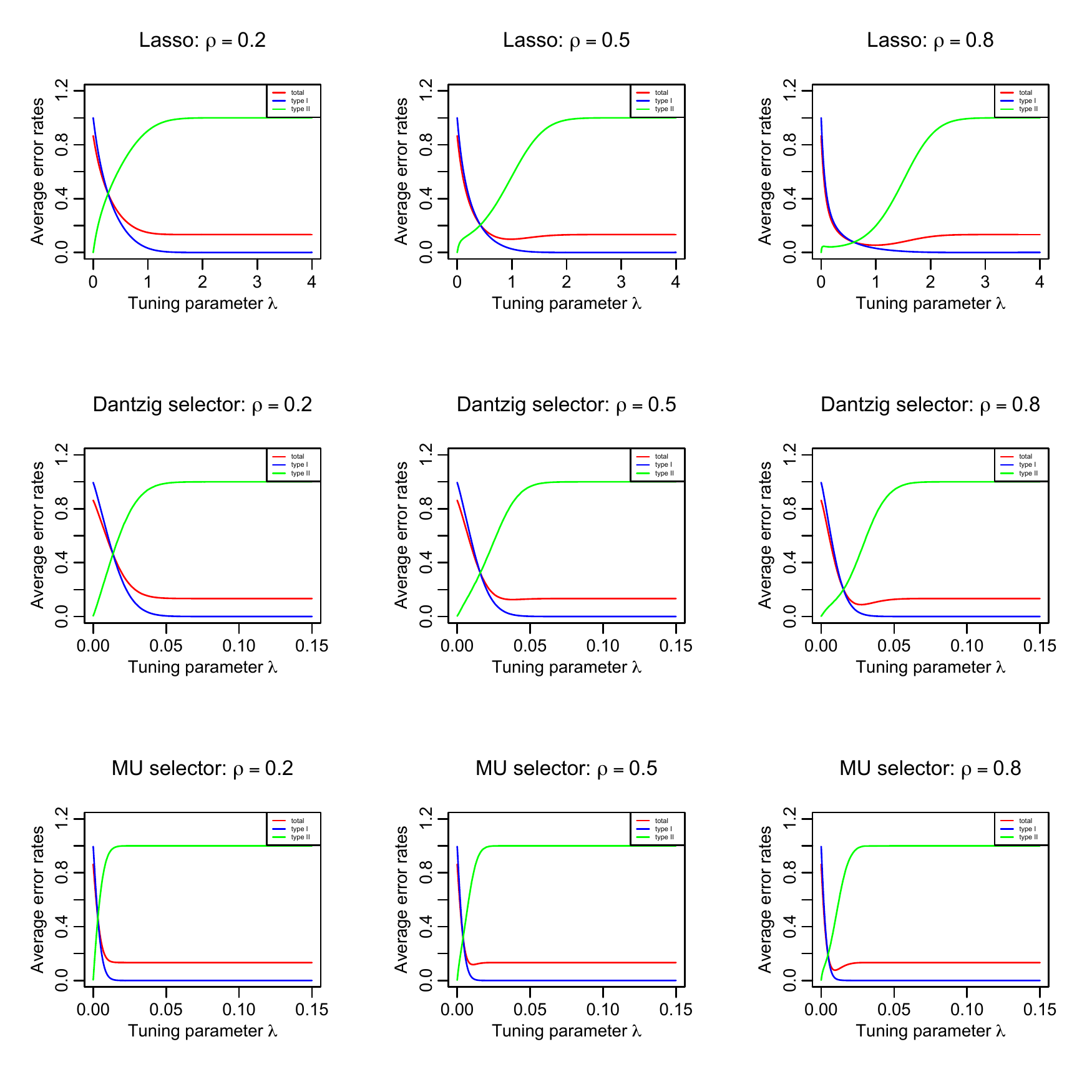} 
	\caption{\small Average error rates as functions of $\lambda$ for $K = 15$ and $n = 20$}
	\label{fig: rate1} 
\end{figure}
Moreover, when $n \geq K$, and for $\lambda = 0$, all these methods result in estimates with all components being non-zero, which result in type \uppercase\expandafter{\romannumeral1} error rate equal to $1$ and type \uppercase\expandafter{\romannumeral2} error equals $0$, that is, $(1, 1)$ in the ROC curves. However, when $n < K$, and $\lambda = 0$, the feasible set $\Theta_a$ is dimension at least $(K - n)$.  The Dantzig-type selectors minimize the $L_1$-norm of these $\btheta$'s, which produces some zero terms of the solution; thus, the corresponding type \uppercase\expandafter{\romannumeral1}  error rate will be less than $1$ and the type \uppercase\expandafter{\romannumeral2} error rate might be greater than $0$, that is why the ROC curves of the Dantzig selector and the MU-selector cannot reach $(1, 1)$ for the case $n = 100$ with $K = 150$. However, the solution of the Lasso is not unique, the coordinate decent algorithm could return a solution with all its elements non-zero, resulting $(1, 1)$ in the ROC curves. 
\begin{figure} [H]
	\centering
	\includegraphics[width=1\linewidth]{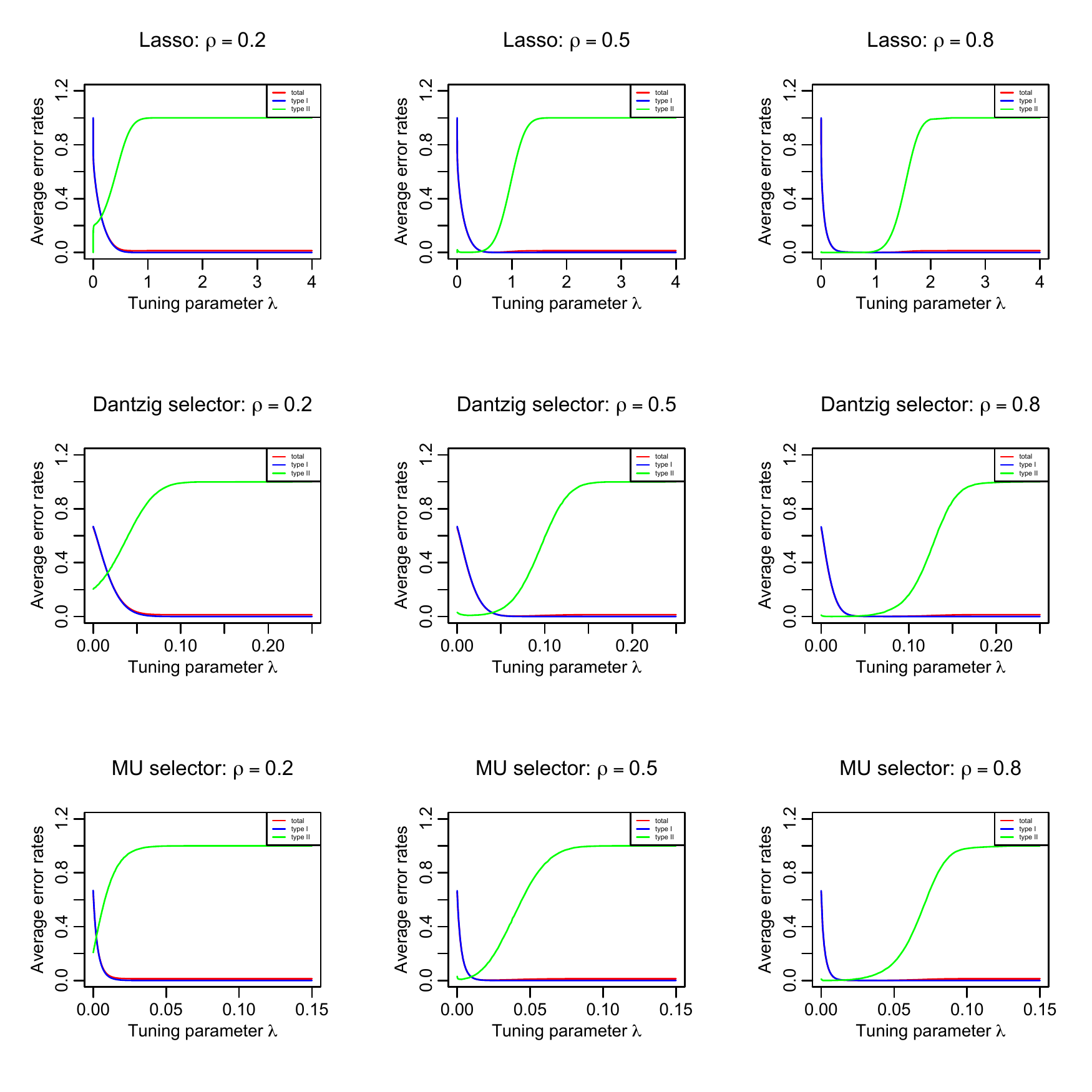} 
	\caption{Average error rates as functions of  $\lambda$ for $K = 150$ and $n = 100$}
	\label{fig: rate7} 
\end{figure}

\subsection{Finite-sample performance with data-driven penalty selection} \label{subsec: finite_penalty}
In this section, we  study the three methods for finite-sample setup discussed in Section~\ref{subsec: finite}. In our simulation study, we consider three different covariance models with $K = 30, 100, 200$, $m_{\min} = 45$ and $n = 100, 500, 1000$. Below we only present the case $K=100$. See supplemental material for the other cases.
\begin{itemize}
	\item AR(1): $\bSigma_{K\times K} = \{\rho^{|i-j|}\}_{1\leq i, j \leq K}$ with $\rho = 0.7$. 
	
	\item AR(4): $d_{ij} = I(|i - j| = 0) + 0.4\dot I(|i - j| = 1) + 0.2\cdot I(|i - j| = 2) + 0.2\cdot I(|i - j| = 3) + 0.1\cdot I(|i - j| = 4)$.
	
	\item A random precision matrix model (see \cite{rothman2008sparse}): $\bD_K = \bB + \delta\bI$ with each off-diagonal entry in $\bB$ is generated independently and equals $0.5$ with probability $\alpha$ or $0$ with probability $1 - \alpha$. $\bB$ has zeros on the diagonal, and $\delta$ is chosen so that the condition number of $\bD$ is $K$.
\end{itemize}
As mentioned in Section~\ref{subsec: finite}, we choose $\lambda$ via cross-validation, and $\tau$ based on $BIC(\tau)$. As for the choice of $\tau$, we often encountered the problem of a very flat BIC-function close to the level of the minimum (some $BIC(\tau)$ plots are shown in figure~\ref{fig: bic}). To combat this problem, we use the following strategy in our simulations: if more than half of the $\tau\in[0, 1]$ result in the same BIC, then we choice the third quartile of these $\tau$'s, otherwise, we choose the one resulting the minimum BIC.
\begin{figure} [H]
	\begin{subfigure}[b]{0.5\linewidth}
		\centering
		\includegraphics[width=1\linewidth]{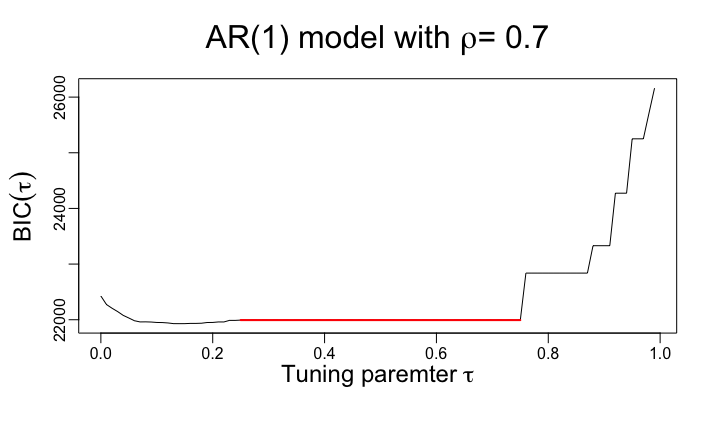} 
	\end{subfigure}
	\begin{subfigure}[b]{0.5\linewidth}
		\centering
		\includegraphics[width=1\linewidth]{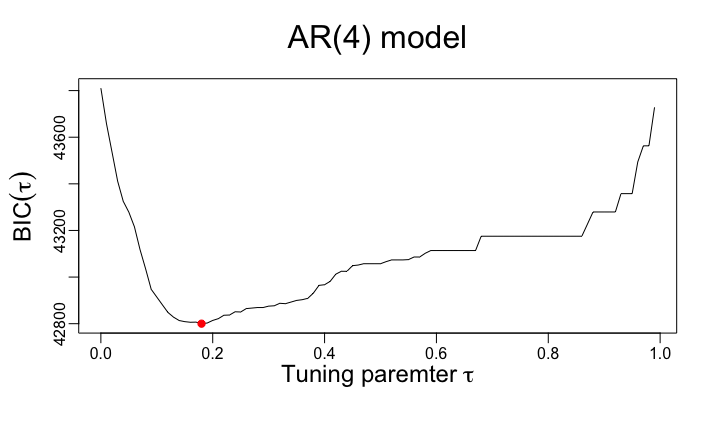} 
	\end{subfigure} 
	\begin{subfigure}[b]{0.5\linewidth}
		\centering
		\includegraphics[width=1\linewidth]{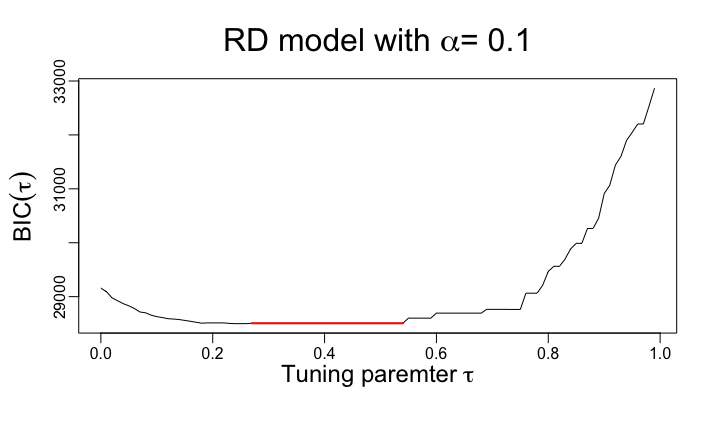} 
	\end{subfigure}
	\begin{subfigure}[b]{0.5\linewidth}
		\centering
		\includegraphics[width=1\linewidth]{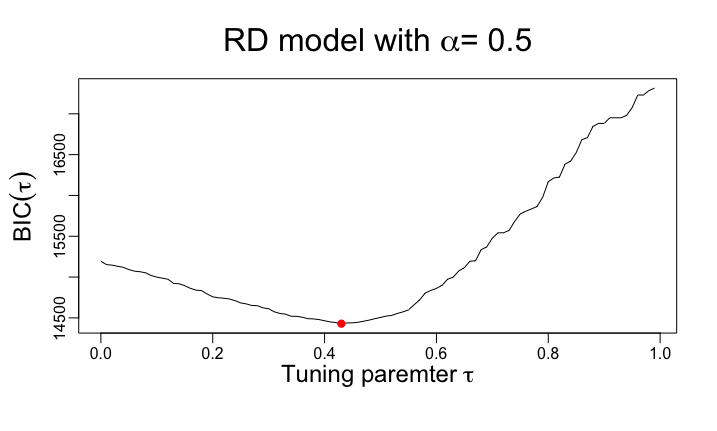} 
	\end{subfigure} 
	\caption{Tuning parameter $\tau$ vs $BIC(\tau)$ plots for the four models with $K = 30$: the red points corresponds to the optimal $\tau$}
	\label{fig: bic} 
\end{figure}

Simulation results for AR(1) and AR(4) models are shown in tables~\ref{tab: ar1_k100} and \ref{tab: ar4_k100}, respectively. For the random precision matrix model we consider $\alpha = 0.1$ and $\alpha = 0.5$ (as in \cite{zhou2011high}). The simulation results are shown in tables~\ref{tab: rd0.1_k100} and \ref{tab: rd0.5_k100}, respectively. The tables show averages and SEs of classification errors in \% over 100 replicates for the three proposed methods with both $``\vee"$ (left) and $``\wedge"$ (right).

\begin{table} [H]
	\caption{AR(1) model with $K = 100$}
	\begin{subtable}{1\textwidth}
		\sisetup{table-format=-1.2}   
		\centering
		\caption{$n = 100$}
		\begin{tabular}{c|c c c }
			\hline
			Ave (SE) 
			& Total (\%) &  Type \uppercase\expandafter{\romannumeral1} (\%)  & Type \uppercase\expandafter{\romannumeral2} (\%) \\
			\hline
			{Lasso}&0.90(0.45); 1.16(0.44)&0.91(0.47); 1.18(0.45)&0.39(0.68); 0.06(0.28)\\
			{Dantzig} &0.84(0.42); 1.26(0.44)&0.85(0.43); 1.28(0.45)&0.38(0.68); 0.12(0.36)
			\\
			{MU} &0.93(0.49); 1.22(0.43)&0.94(0.50); 1.24(0.44)&0.23(0.53); 0.15(0.44)\\
			\hline 
		\end{tabular}
	\end{subtable}
	
	\bigskip
	
	\begin{subtable}{1\textwidth}
		\sisetup{table-format=-1.2}   
		\centering
		\caption{$n = 500$}
		\begin{tabular}{c|c c c }
			\hline
			Ave (SE) 
			& Total (\%) &  Type \uppercase\expandafter{\romannumeral1} (\%)  & Type \uppercase\expandafter{\romannumeral2} (\%) \\
			\hline
			{Lasso}&0.566(0.220); 0.716(0.231)&0.577(0.224); 0.730(0.236)&0(0)\\
			{Dantzig} &0.601(0.208); 0.700(0.255)&0.613(0.212); 0.714(0.260)&0(0)\\
			{MU} &0.574(0.203); 0.676(0.238)&0.586(0.207); 0.690(0.243)&0(0)\\
			\hline 
		\end{tabular}
	\end{subtable}
	
	\bigskip
	
	\begin{subtable}{1\textwidth}
		\sisetup{table-format=-1.2}   
		\centering
		\caption{$n = 1000$}
		\begin{tabular}{c|c c c }
			\hline
			Ave (SE) 
			& Total (\textperthousand) &  Type \uppercase\expandafter{\romannumeral1} (\%)  & Type \uppercase\expandafter{\romannumeral2} (\%) \\
			\hline
			{Lasso}&0.473(0.529); 0.720(0.510)&0.483(0.539); 0.735(0.521)& 0(0) \\
			{Dantzig} &0.501(0.533); 0.749(0.523)&0.511(0.544); 0.764(0.534)& 0(0) \\
			{MU} &0.522(0.532); 0.741(0.501)&0.532(0.543); 0.756(0.511)& 0(0)\\
			\hline 
		\end{tabular}
	\end{subtable}
	
	\label{tab: ar1_k100}
\end{table}

\vspace{6mm}

\begin{table} [H]
	\caption{AR(4) model with $K = 100$}
	\begin{subtable}{1\textwidth}
		\sisetup{table-format=-1.2}   
		\centering
		\caption{$n = 100$}
		\begin{tabular}{c|c c c }
			\hline
			Ave (SE) 
			& Total (\%) &  Type \uppercase\expandafter{\romannumeral1} (\%)  & Type \uppercase\expandafter{\romannumeral2} (\%) \\
			\hline
			{Lasso}&8.18(0.45); 8.26(0.37)&2.37(0.59); 2.27(0.44)&76.0(2.13); 78.3(1.93)\\
			{Dantzig} &8.21(0.39); 8.21(0.33)&2.38(0.51); 2.21(0.43)&76.3(1.96); 78.3(2.23)
			\\
			{MU} &8.33(0.44); 8.28(0.37)&2.57(0.52); 2.33(0.44)&75.5(1.71); 77.7(1.82)\\
			\hline 
		\end{tabular}
	\end{subtable}
	
	\bigskip
	
	\begin{subtable}{1\textwidth}
		\sisetup{table-format=-1.2}   
		\centering
		\caption{$n = 500$}
		\begin{tabular}{c|c c c }
			\hline
			Ave (SE) 
			& Total (\%) &  Type \uppercase\expandafter{\romannumeral1} (\%)  & Type \uppercase\expandafter{\romannumeral2} (\%) \\
			\hline
			{Lasso}&4.69(0.28); 4.76(0.27)&1.22(0.36); 1.30(0.37)&45.2(3.91); 45.2(3.36)\\
			{Dantzig} &4.77(0.28); 4.81(0.27)&1.21(0.45); 1.30(0.38)&46.4(3.87); 45.9(3.61)
			\\
			{MU} &4.74(0.24); 4.79(0.25)&1.18(0.37); 1.30(0.38)&46.3(3.77); 45.6(3.76)
			\\
			\hline 
		\end{tabular}
	\end{subtable}
	
	\bigskip
	
	\begin{subtable}{1\textwidth}
		\sisetup{table-format=-1.2}   
		\centering
		\caption{$n = 1000$}
		\begin{tabular}{c|c c c }
			\hline
			Ave (SE) 
			& Total (\%) &  Type \uppercase\expandafter{\romannumeral1} (\%)  & Type \uppercase\expandafter{\romannumeral2} (\%) \\
			\hline
			{Lasso}&2.82(0.31); 2.77(0.29)&1.37(0.37); 1.34(0.35)&19.8(1.97); 19.5(2.01)\\
			{Dantzig} &2.86(0.25); 2.80(0.29)&1.35(0.30); 1.34(0.35)&20.5(1.96); 19.8(2.07)
			\\
			{MU} &2.87(0.28); 2.79(0.28)&1.37(0.35); 1.31(0.32)&20.4(2.02); 20.1(2.02)
			\\
			\hline 
		\end{tabular}
	\end{subtable}
	
	\label{tab: ar4_k100}
\end{table}

\begin{table} [H]
	\caption{The random precision matrix model with $\alpha = 0.1$ and $K = 100$}
	\begin{subtable}{1\textwidth}
		\sisetup{table-format=-1.2}   
		\centering
		\caption{$n = 100$}
		\begin{tabular}{c|c c c }
			\hline
			Ave (SE) 
			& Total (\%) &  Type \uppercase\expandafter{\romannumeral1} (\%)  & Type \uppercase\expandafter{\romannumeral2} (\%) \\
			\hline
			{Lasso} &10.3(0.68); 9.84(0.65)&4.40(0.59); 3.91(0.61)& 63.9(4.19); 64.3(5.11)\\
			{Dantzig} &10.3(0.70); 9.97(0.68)&4.23(0.74); 4.08(0.56) &66.0(4.32); 64.0(4.40)\\
			{MU}&10.1(0.65); 9.96(0.69)&4.09(0.58); 4.05(0.57)&65.6(4.44); 64.0(4.24)\\
			\hline 
		\end{tabular}
	\end{subtable}
	
	\bigskip
	
	\begin{subtable}{1\textwidth}
		\sisetup{table-format=-1.2}   
		\centering
		\caption{$n = 500$}
		\begin{tabular}{c|c c c }
			\hline
			Ave (SE) 
			& Total (\%) &  Type \uppercase\expandafter{\romannumeral1} (\%)  & Type \uppercase\expandafter{\romannumeral2} (\%) \\
			\hline
			{Lasso} &3.47(0.51); 3.65(0.50)&2.71(0.50); 2.92(0.52)&10.4(2.80); 10.2(2.90)\\
			{Dantzig} &4.02(0.51); 4.12(0.55)&3.11(0.50); 3.43(0.58)&12.2(3.06); 10.2(2.72)\\
			{MU}&4.04(0.45); 4.25(0.61)&3.12(0.50); 3.53(0.64)&12.2(3.27); 10.5(2.39)\\
			\hline 
		\end{tabular}
	\end{subtable}
	
	\bigskip
	
	\begin{subtable}{1\textwidth}
		\sisetup{table-format=-1.2}   
		\centering
		\caption{$n = 1000$}
		\begin{tabular}{c|c c c }
			\hline
			Ave (SE) 
			& Total (\%) &  Type \uppercase\expandafter{\romannumeral1} (\%)  & Type \uppercase\expandafter{\romannumeral2} (\%) \\
			\hline
			{Lasso} &1.34(0.38); 1.47(0.32)&1.35(0.43); 1.50(0.36)&1.27(0.67); 1.32(0.71)\\
			{Dantzig} &1.77(0.38); 1.74(0.38)&1.79(0.43); 1.77(0.42)&1.74(0.87); 1.42(0.70)\\
			{MU}&1.91(0.36); 2.29(0.58)&1.92(0.40); 2.32(0.65)&1.78(0.81); 1.79(0.80)\\
			\hline 
		\end{tabular}
	\end{subtable}
	
	\label{tab: rd0.1_k100}
\end{table}

\vspace{6mm}

\begin{table} [H]
	\caption{The random precision matrix model with $\alpha = 0.5$ and $K = 100$}
	\begin{subtable}{1\textwidth}
		\sisetup{table-format=-1.2}   
		\centering
		\caption{$n = 100$}
		\begin{tabular}{c|c c c }
			\hline
			Ave (SE) 
			& Total (\%) &  Type \uppercase\expandafter{\romannumeral1} (\%)  & Type \uppercase\expandafter{\romannumeral2} (\%) \\
			\hline
			{Lasso} &49.0(0.88); 49.2(0.87)&7.04(1.16); 6.08(1.04)& 90.9(1.40); 92.1(1.30)\\
			{Dantzig} &49.2(0.88); 49.2(0.87) & 6.48(0.92); 6.22(0.90) &91.9(1.10); 92.1(1.09)\\
			{MU}&49.2(0.85); 49.2(0.89) & 6.39(0.88); 6.20(0.86) & 91.9(1.11); 92.1(1.13)\\
			\hline 
		\end{tabular}
	\end{subtable}
	
	\bigskip
	
	\begin{subtable}{1\textwidth}
		\sisetup{table-format=-1.2}   
		\centering
		\caption{$n = 500$}
		\begin{tabular}{c|c c c }
			\hline
			Ave (SE) 
			& Total (\%) &  Type \uppercase\expandafter{\romannumeral1} (\%)  & Type \uppercase\expandafter{\romannumeral2} (\%) \\
			\hline
			{Lasso} &42.5(1.30); 42.6(1.34)&12.6(1.44); 12.8(1.66)&72.2(2.64); 72.3(2.67)\\
			{Dantzig} &44.9(1.32); 44.2(1.25)&13.7(1.73); 14.4(1.72)&76.1(2.81); 73.8(2.51)\\
			{MU}&45.7(1.23); 44.7(1.29)&13.6(1.53); 14.1(1.52)&77.6(2.13); 75.2(2.21)\\
			\hline 
		\end{tabular}
	\end{subtable}
	
	\bigskip
	
	\begin{subtable}{1\textwidth}
		\sisetup{table-format=-1.2}   
		\centering
		\caption{$n = 1000$}
		\begin{tabular}{c|c c c }
			\hline
			Ave (SE) 
			& Total (\%) &  Type \uppercase\expandafter{\romannumeral1} (\%)  & Type \uppercase\expandafter{\romannumeral2} (\%) \\
			\hline
			{Lasso} &33.0(1.43); 32.9(1.43)&13.5(1.58); 13.8(1.58)&52.6(3.17); 52.0(3.00)\\
			{Dantzig} &36.4(1.29); 35.4(1.29)&16.2(1.47); 15.8(1.82)&56.6(2.93); 55.0(3.24)\\
			{MU}&41.2(1.12); 39.7(1.26)&17.5(2.05); 17.8(1.69)&64.8(2.28); 61.7(2.25)\\
			\hline 
		\end{tabular}
	\end{subtable}
	\label{tab: rd0.5_k100}
\end{table}

\section*{Acknowledgment}
This research has partially been supported by the National Science Foundation under Grant No. DMS 1713108.

\section{Appendix: proofs} \label{sec: pfs}
Recall the notation introduced in Section~\ref{subsec: uncertainty}, 
\begin{align*}
	&\wh{\bH} =\bH+\bR \\
	&\bEta^{(t)}=\bX\bbeta+\bm{\epsilon}^{(t)}\;\text{ for }t=1,\cdots,n \\
	&\bm{\epsilon}^{(t)}\sim N(\bm{0},\bSigma) \;\text{ i.i.d. for }t=1,\cdots,n. 
\end{align*}
In the proofs we denote by ``$c$" a positive constant that can be different in each formula. 

\subsection{Proof of Lemma~\ref{lemma: net_etaDiff}}
We first consider the case $\bbeta=\bm{0}$ and show the following result: 
\begin{lemma} \label{lemma: net_b0}
	Let $\sigma=\max_{k\in\wbK}\sqrt{\sigma_{kk}}$. Under the latent block model with $\bbeta=\bm{0}$, for $L>0$, if $m_{\min}\ge 16M^2\log (nK)e^{2L}$,
	\begin{multline*}
		P\Big(\max_{1 \le k \le K, 1 \le t \le n} |\wh \eta_k^{(t)} - \eta_k^{(t)}| < 8Me^L\sqrt{\log(nK)/m_k}\Big) \\
		\geq
		1 -\sqrt{\frac{2}{\pi}}\frac{nK\sigma}{\min\{L, T\}}\exp\Big(-\frac{(\min\{L, T\})^2}{2\sigma^2}\Big) - \Big(\frac{1}{nK}\Big)^{2M^2 - 1}.
	\end{multline*}
\end{lemma}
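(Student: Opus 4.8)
The plan is to split the error $\wh\eta_k^{(t)}-\eta_k^{(t)}$ into a part caused by the binomial sampling of the within-block edges and a part caused by the logit transform together with the truncation, and to control each on a suitable high-probability event. Since $\bbeta=\bm 0$, each $\eta_k^{(t)}\sim N(0,\sigma_{kk})$, and conditionally on $\eta_k^{(t)}$ (equivalently on $p_{k,k}^{(t)}$) the edge count $S_k^{(t)}$ is $\mathrm{Binomial}(m_k,p_{k,k}^{(t)})$, so $\wt p_{k,k}^{(t)}=S_k^{(t)}/m_k$ is an average of $m_k$ i.i.d.\ Bernoulli variables. The governing event is
\[
A=\bigcap_{1\le k\le K,\,1\le t\le n}\bigl\{|\eta_k^{(t)}|\le \min\{L,T\}\bigr\},
\]
on which the true log-odds lie inside the truncation window and the logit map is Lipschitz with a controlled constant. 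Writing $B=\bigcap_{k,t}\{|\wt p_{k,k}^{(t)}-p_{k,k}^{(t)}|\le s\}$ for $s=M\sqrt{\log(nK)/m_k}$, I would show that the event in the statement contains $A\cap B$, and then bound the probability by $1-P(A^c)-P(B^c)$.

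First I would bound $P(A^c)$. Because $\eta_k^{(t)}\sim N(0,\sigma_{kk})$ with $\sigma_{kk}\le\sigma^2$, the Gaussian Mills-ratio inequality gives $P(|\eta_k^{(t)}|>a)\le\sqrt{2/\pi}\,(\sigma/a)\exp(-a^2/(2\sigma^2))$ with $a=\min\{L,T\}$, the bound being monotone in the variance. A union bound over the $nK$ pairs $(k,t)$ then produces exactly the first subtracted term $\sqrt{2/\pi}\,\frac{nK\sigma}{\min\{L,T\}}\exp(-(\min\{L,T\})^2/(2\sigma^2))$. For $P(B^c)$, I work conditionally on $\bEta$: Hoeffding's inequality for the Bernoulli average gives $P(|\wt p_{k,k}^{(t)}-p_{k,k}^{(t)}|>s\mid\bEta)\le 2\exp(-2m_ks^2)$ for every value of $p_{k,k}^{(t)}$, and the choice $s=M\sqrt{\log(nK)/m_k}$ makes each tail $\le 2(nK)^{-2M^2}$. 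Taking expectations (tower property) and a union bound over the $nK$ indices yields $P(B^c)\le 2nK(nK)^{-2M^2}$, i.e.\ the last subtracted term of order $(1/(nK))^{2M^2-1}$.

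Next I would verify that $A\cap B$ implies the asserted bound. The hypothesis $m_{\min}\ge 16M^2\log(nK)e^{2L}$ enters here: it forces $s\le e^{-L}/4$, while on $A$ one has $p_{k,k}^{(t)}\in[(1+e^L)^{-1},(1+e^{-L})^{-1}]\subset[e^{-L}/2,\,1-e^{-L}/2]$, so on $A\cap B$ both $p_{k,k}^{(t)}$ and $\wt p_{k,k}^{(t)}$ stay in $[e^{-L}/4,\,1-e^{-L}/4]$, bounded away from $0$ and $1$. Applying the mean value theorem to $p\mapsto\log(p/(1-p))$, whose derivative is $1/(p(1-p))$, gives $|\wt\eta_k^{(t)}-\eta_k^{(t)}|=|\wt p_{k,k}^{(t)}-p_{k,k}^{(t)}|/(\zeta(1-\zeta))$ for some $\zeta$ between the two probabilities; since $\zeta\in[e^{-L}/4,1-e^{-L}/4]$ forces $\zeta(1-\zeta)\ge e^{-L}/8$, we obtain $|\wt\eta_k^{(t)}-\eta_k^{(t)}|\le 8e^LM\sqrt{\log(nK)/m_k}$. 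Finally, on $A$ the true value $\eta_k^{(t)}$ lies in $[-T,T]$, and the truncation $x\mapsto\wh x$ is the metric projection onto $[-T,T]$, hence a contraction toward any point of that interval; therefore $|\wh\eta_k^{(t)}-\eta_k^{(t)}|\le|\wt\eta_k^{(t)}-\eta_k^{(t)}|$, and the target bound follows for all $(k,t)$ simultaneously on $A\cap B$.

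The main obstacle is the interplay of the three error sources through the nonlinear logit. The delicate point is precisely the calibration $m_{\min}\ge 16M^2\log(nK)e^{2L}$: it is what keeps $\wt p_{k,k}^{(t)}$ away from $0$ and $1$ on $A\cap B$ and hence makes the logit's Lipschitz constant of order $e^{L}$ rather than blowing up, so that the sampling error $s$ translates into the stated $\eta$-scale bound $8Me^L\sqrt{\log(nK)/m_k}$. The remaining bookkeeping (Gaussian tail, Hoeffding constants, and the elementary inequalities bounding $p_{k,k}^{(t)}$ and $\zeta(1-\zeta)$) is routine once this calibration is in place.
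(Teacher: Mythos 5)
Your proposal is correct and follows essentially the same route as the paper's proof: Hoeffding's inequality conditionally on $p_{k,k}^{(t)}$ with a union bound over the $nK$ indices, the mean value theorem for the logit with the intermediate point kept away from $0$ and $1$ via the blocksize condition, the Gaussian tail bound for the event that all $\eta_k^{(t)}$ stay inside the truncation window, and the contraction property of the truncation. Your organization is in fact cleaner -- you deterministically confine $p_{k,k}^{(t)}$, $\wt p_{k,k}^{(t)}$ and hence $\zeta$ to $[e^{-L}/4,\,1-e^{-L}/4]$ on the intersection of the two events, where the paper reaches the same conclusion through a more roundabout probabilistic bound involving an auxiliary parameter $B=e^{L^2/2}$ -- and the only blemish (a harmless factor of $2$ on the $(nK)^{1-2M^2}$ term from the two-sided Hoeffding bound) is one the paper's own derivation shares.
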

\begin{proof}
	From Hoeffding's inequality we have, for any $M > 0$, that
	\begin{align*}
		P\big(\sqrt{m_k} | \wt{p}_{k,k}^{(t)} - p_{k,k}^{(t)}| > M\,\big|\,p_{k,k}^{(t)}\big)
		& =
		P\Big(\big|\sum\nolimits_{z[i] =k, z[j] = k} (Y^{(t)}_{ij} - p_{k,k}^{(t)})/m_k\big| > {M}/{\sqrt{m_k}}\, \Big|\, p_{k,k}^{(t)}\Big)\\
		& \leq 
		2\exp\big( - 2m_k \big({M}/{\sqrt{m_k}}\big)^2 \big) = 
		2\exp\big(- 2M^2\big).
	\end{align*}
	Thus, using the fact that, given $\bp = (p_{k,k},\ldots,p_{K,K})^T$, all the $Y_{ij}$ are independent, we obtain
	\begin{align*} 
		P\Big( \max_{1 \leq k \leq K, 1 \leq t \leq n}\sqrt{m_k} & |\wt{p}_{k,k}^{(t)} - p_{k,k}^{(t)}| > M\sqrt{\log(nK)}\,\big|\,\big\{\bp^{(t)}: 1\leq t\leq n\big\}\Big) \\
		& \leq 
		nK\max _{1 \leq k \leq K, 1\le t \leq n}P\left( \sqrt{m_k}|\wt{p}_{k,k}^{(t)} - p_{k,k}^{(t)}|  > M \sqrt{\log (nK)}\, \Big|\, p_{k,k}^{(t)}\right) \\
		& \leq 
		nK\exp\left(-2M^2\log(nK)\right)  = 
		(nK)^{1-2M^2}, 
	\end{align*}
	By integrating over $\{\bp^{(t)}: 1\leq t\leq n\}$, we obtain
	\begin{align}\label{Pineq: net_pDiff}
		P\Big( \max_{1 \leq k \leq K, 1 \leq t \leq n}\sqrt{m_k} & |\wt{p}_{k,k}^{(t)} - p_{k,k}^{(t)}| > M\sqrt{\log(nK)}\Big) \leq (nK)^{1-2M^2}.
	\end{align}
	Note that if $|\eta_k^{(t)}| < T$, we have  
	$|\wh{\eta}_k^{(t)} - \eta_k^{(t)} |\leq |\wt{\eta}_k^{(t)} - \eta_k^{(t)} |$,  and we can write
	\begin{align*}
		\wt{\eta}_k^{(t)} - \eta_k^{(t)} 
		= 
		\log\tfrac{\wt{p}^{(t)}_{k,k}}{ 1 - \wt{p}_{k,k}^{(t)}} - \log\tfrac{p_{k,k}^{(t)}}{1 - p_{k,k}^{(t)}} 
		= 
		\tfrac{1}{\xi_k^{(t)} (1-\xi_k^{(t)})}(\wt{p}_{k,k}^{(t)} - p_{k,k}^{(t)}), 
	\end{align*}
	where $\xi_{k}^{(t)}$ lies between $\wt{p}_{k,k}^{(t)}$ and $p_{k,k}^{(t)}$. Since $|\xi_{k}^{(t)} - p_{k,k}^{(t)}| \leq |\wt{p}_{k,k}^{(t)} - p_{k,k}^{(t)}|$, inequality \eqref{Pineq: net_pDiff} also applies to $|\xi_{k}^{(t)} - p_{k,k}^{(t)}|$, so that
	\begin{align*}
		P\Big(\max _{1 \leq k \leq K, 1\leq t \leq n}\sqrt{m_k} |\xi_{k}^{(t)} - p_{k,k}^{(t)}| > M\sqrt{\log (nK) }\Big) 
		\leq (nK)^{1-2M^2}.
	\end{align*}
	It follows that 
	\begin{multline} \label{Pineq: net_xi}
		P\left( \xi^{(t)}_{k} <  \epsilon \text{ or }\; \xi^{(t)}_{k} > 1- \epsilon,\text{ for all }1 \leq k \leq K, 1 \leq t \leq n\right) \\
		\leq
		P\Big( p_{k,k}^{(t)} < \epsilon + \tfrac{M\sqrt{ \log (nK)}}{\sqrt{m_k}}   \text{ or }\;p_{k,k}^{(t)}  \geq 1 - \epsilon - \tfrac{M\sqrt{ \log (nK)}}{\sqrt{m_k}},\text{ for all } 1 \leq k \leq K,\;1 \leq t \leq n\Big) \\
		+ P\Big(\max _{1 \le k \le K, 1\le t \le n}\sqrt{m_k} |\xi_{k}^{(t)} - p_{k,k}^{(t)}| > M\sqrt{\log(nK)}\Big).
	\end{multline}
	As for $\eta_{k}$, since $\eta_k^{(t)}-\mu_k\sim N(0,\sigma_{kk})$ and  
	\begin{align*}
	\max _{1 \leq k \leq K, 1\leq t \leq n}|\eta_k^{(t)}| \leq \max _{1 \leq k \leq K, 1\leq t \leq n}|\eta_k^{(t)}-\mu_k|+\max_{1\leq k\leq K}|\mu_k|, 
	\end{align*}
    we have, for $C > 1$, that
	\begin{align*}
		P\Big(\max _{1 \leq k \leq K, 1\leq t \leq n}|\eta_k^{(t)}| > \sqrt{2\log C}+\mu_B\Big)
		& \leq 
		P\Big(\max _{1 \leq k \leq K, 1\leq t \leq n}\big|(\eta_{k}^{(t)} - \mu_k)/\sqrt{\sigma_{kk}}\big| > \sqrt{2\log C}/\sigma\Big) \\
		& \leq 2nK\wt{\Phi}\big({\sqrt{2\log C}}/\sigma\big), 
	\end{align*}
	where $\wt{\Phi} = 1-\Phi$ ($\Phi$ is the c.d.f. of $N(0,1)$). Note that 
	\begin{multline*}
		P\Big(\max _{1 \leq k \leq K, 1\leq t \leq n}|\eta_k^{(t)}| > \sqrt{2\log B}\Big) \\
		=
		P\Big(\max _{1 \leq k \leq K, 1\leq t \leq n} p_{k,k}^{(t)} > \frac{1}{1 + e^{-\sqrt{2\log B}}}\Big) + P\Big(\min _{1 \leq k \leq K, 1\leq t \leq n} p_{k,k}^{(t)} < \frac{1}{1 + e^{\sqrt{2\log B}}}\Big). \quad 
	\end{multline*}
	Now we are using the following: 
	\begin{fact}
		For each $c_0>0,$ we can find $x_0=e^{1/c_0^2} > 1$ such that, for $x \ge x_0 > 1,$ we have $e^{-\sqrt{\log x}} \ge x^{-c_0}$. 
	\end{fact}
	\noindent 
	Using this fact, it follows that, for any $L > 0,$ we have that  $\frac{1}{1 + B^{-2/L}} \geq  \frac{1}{1 + e^{-\sqrt{2\log B\,}}}$ for $B\geq e^{L^2/2}$. W.l.o.g. assume that $B > 1.$ Then,
	\begin{align*}
		P\Big(\max _{1 \leq k \leq K, 1\leq t \le n} p_{k,k}^{(t)} > \frac{1}{1 + e^{-\sqrt{2\log B\,}}}\Big) 
		& \geq
		P\Big(\max _{1 \leq k \leq K, 1\leq t \leq n} p_{k,k}^{(t)} > \frac{1}{1 + B^{-2/L}}\Big) \\
		& \geq
		P\Big(\max _{1 \leq k \leq K, 1\leq t \leq n} p_{k,k}^{(t)} > 1 - 2^{-1}B^{-\frac{2}{L}}\Big).
	\end{align*}
	Further, using that $\frac{e^x}{1 + e^x} \ge \frac{1}{2}$ for $x > 0$, we obtain (by using the above fact again) that for any $L > 0$ and $B\geq e^{L^2/2}$,
	\begin{align*}
		P\Big(\min_{1 \leq k \leq K, 1\leq t \leq n} p_{k,k}^{(t)} < \frac{1}{1 + e^{\sqrt{2\log B}}}\Big) 
		& = 
		P\Big(\min_{1 \leq k \leq K, 1\leq t \leq n} p_{k,k}^{(t)} < e^{-\sqrt{2\log B}} \frac{e^{\sqrt{2\log B\,}}}{1 + e^{\sqrt{2\log B\,}}}\Big)\\
		& \geq
		P\Big(\min_{1 \leq k \leq K, 1\leq t \leq n} p_{k,k}^{(t)} < \tfrac{1}{2}B^{-\frac{2}{L}}\Big).
	\end{align*}
	This means that in \eqref{Pineq: net_xi} we can choose $\epsilon = \epsilon(L) = \frac{1}{4} B^{-\frac{2}{L}}$. It follows that  with this choice of $\epsilon$ (for arbitrarily large $L$) and assuming that
	\begin{align}\label{ineq: net_mk}
		\max_{1 \leq k \leq K} \tfrac{M \sqrt{\log (nK)}}{\sqrt{m_k}} \leq \epsilon,
	\end{align}
	we have
	\begin{align*}
		P\big( \xi^{(t)}_{k} <  \epsilon \text{ or } \xi^{(t)}_{k} > 1- \epsilon,\;1 \leq k \leq K, 1 \leq t \leq n\big)
		\leq
		2nK\wt{\Phi}\big({\sqrt{2\log B}}/{\sigma}\big)+ (nK)^{1 - 2M^2}.
	\end{align*}
	Finally, this leads to: let $\epsilon = \epsilon(L) = \frac{1}{4} B^{-\frac{2}{L}}$. If \eqref{ineq: net_mk} holds, then for $B\geq e^{L^2/2}$  
	\begin{align*}
		P\Big(\tfrac{1}{\xi^{(t)}_{k} (1-\xi^{(t)}_{k})} \geq {2}/{\epsilon},\;1 \leq t \leq n, 1 \leq k \leq K  \Big) 
		\leq
		2nK\wt{\Phi}\big({\sqrt{2\log B}}/{\sigma}\big) + (nK)^{1 - 2M^2}.
	\end{align*}
	Then
	\begin{multline*}
		\quad P\Big(\max_{1 \leq k \leq K, 1 \leq t \leq n}\sqrt{m_k} |\wh \eta_k^{(t)} - \eta_k^{(t)}| < 2M\epsilon^{-1}\sqrt{\log(nK)}\Big) \\
		\geq
		1 - 2nK\wt{\Phi}\big(\sigma^{-1}\min\big\{\sqrt{2\log B}, T\big\}\big) - ({nK})^{1 - 2M^2}, \quad 
	\end{multline*}
	i.e. for $L>0$, if $m_{\min}\ge 16M^2\log( nK)B^{4/L}$ and $B\geq e^{L^2/2}$,
	\begin{multline*}
		\quad P\Big(\max_{1 \leq k \leq K, 1 \leq t \leq n}\sqrt{m_k} |\wh \eta_k^{(t)} - \eta_k^{(t)}| <8MB^{\frac{2}{L}}\sqrt{\log(nK)}\Big) \\ 
		\geq
		1 - 2nK\wt{\Phi}\big(\sigma^{-1}\min\big\{\sqrt{2\log B}, T\big\}\big) - ({nK})^{1 - 2M^2}. \quad 
	\end{multline*}
	Choosing $B=e^{L^2/2}$, then for $L>0$,  if $m_{\min}\geq 16M^2\log (nK)e^{2L}$,
	\begin{align*}
		& P\Big(\max_{1 \leq k \leq K, 1 \leq t \leq n} |\wh \eta_k^{(t)} - \eta_k^{(t)}| <8Me^L\sqrt{{\log(nK)}/{m_{\min}}}\,\Big) \\
		& \hspace{2.2cm} \geq 
		1 -\sqrt{\frac{2}{\pi}}\frac{nK\sigma}{\min\{L, T\}}\exp\Big(-\frac{(\min\{L, T\})^2}{2\sigma^2}\Big) - \Big(\frac{1}{nK}\Big)^{2M^2 - 1}.
	\end{align*}
\end{proof}
\noindent
The proof of Lemma~\ref{lemma: net_etaDiff} with $\bbeta \neq \bm{0}$ is similar, but it uses 
\begin{align*}
	P\big(\max _{1 \leq k \leq K, 1\leq t \leq n}|\eta_k^{(t)}| > \sqrt{2\log C}+\mu_B\big) 
	\leq
	nKC^{-1/\sigma^2}/{\sqrt{\pi\log C^{1/\sigma^2}}}, 
\end{align*}
which comes from $\eta_k^{(t)}-\mu_k\sim N(0,\sigma_{kk})$ and 
\begin{align*}
	\max _{1 \leq k \leq K, 1\leq t \leq n}|\eta_k^{(t)}| \leq \max _{1 \leq k \leq K, 1\leq t \leq n}|\eta_k^{(t)}-\mu_k|+\max_{1\leq k\leq K}|\mu_k|.
\end{align*}

\subsection{Proof of Theorem~\ref{corollary: net_mb0_asy}}
We first introduce assumption 
\begin{itemize}
	\item[C0] $|\bR|_\infty < \delta$ for fixed $\delta > 0$. 
\end{itemize}
This assumption makes the proofs more transparent. Intermediate results are using this assumptions. When applying these intermediate results to prove the main results (that are not using assumption C0 explicitly), we will show that C0 holds with sufficiently large probability. The following fact immediately follows from the definition of $\bR$ (see Section~\ref{subsec: uncertainty}):
\begin{fact} \label{claim: mb1}
	Under assumption C0, $\|\wh{\bEta}_k-\bEta_k\|_2\leq \sqrt{n}\delta$.
\end{fact}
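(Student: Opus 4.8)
The plan is to read the bound off directly from the definitions, since it is nothing more than the comparison between the $\ell_2$-norm on $\mathbb{R}^n$ and the entrywise maximum norm of a matrix. First I would recall the index conventions set up in Section~\ref{subsec: uncertainty}: by construction $\wh{\bEta}_k$ and $\bEta_k$ are the $k$-th columns of the $n\times K$ matrices $\wh{\bH}$ and $\bH$, respectively. Hence the difference $\wh{\bEta}_k - \bEta_k$ is exactly the $k$-th column of $\bR = \wh{\bH} - \bH$, and its $n$ components are the scalars $\wh{\eta}_k^{(t)} - \eta_k^{(t)}$, $t = 1,\ldots,n$.

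Next I would invoke assumption C0, recalling that $|\cdot|_\infty$ denotes the entrywise maximum over the \emph{whole} matrix, not a row or column norm. Thus $|\bR|_\infty = \max_{t,k}|\wh{\eta}_k^{(t)} - \eta_k^{(t)}| < \delta$, so the single bound $\delta$ applies uniformly to every coordinate of the $k$-th column: $|\wh{\eta}_k^{(t)} - \eta_k^{(t)}| < \delta$ for all $t$. Squaring and summing over the $n$ rows then gives
\begin{align*}
	\|\wh{\bEta}_k - \bEta_k\|_2^2 = \sum_{t=1}^n \big(\wh{\eta}_k^{(t)} - \eta_k^{(t)}\big)^2 \leq \sum_{t=1}^n \delta^2 = n\delta^2,
\end{align*}
and taking square roots yields $\|\wh{\bEta}_k - \bEta_k\|_2 \leq \sqrt{n}\,\delta$, which is the claim (for every $k\in\wbK$).

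There is no substantive obstacle here; this fact is essentially definitional, and the only points requiring mild care are the bookkeeping of the index conventions (columns indexed by the block label $k\in\wbK$, rows by the replicate $t\in\{1,\ldots,n\}$) and the convention on $|\cdot|_\infty$. I would also note that the hypothesis gives a strict coordinatewise inequality, which merely propagates to the stated non-strict bound $\sqrt{n}\,\delta$ after summing, so no additional argument is needed.
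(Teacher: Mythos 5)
Your proof is correct and is exactly the argument the paper has in mind: the paper states this fact "immediately follows from the definition of $\bR$" under C0, and your coordinatewise bound $|\wh{\eta}_k^{(t)}-\eta_k^{(t)}|<\delta$ summed over the $n$ rows is that immediate argument spelled out.
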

\noindent 
We prove a series of results: Theorem~\ref{thm: lasso_type1} and \ref{thm: lasso_type2}, Corollary~\ref{corollary: mb0_asy}, which will then imply Theorem~\ref{corollary: net_mb0_asy}.  The assertion of Theorem~\ref{corollary: net_mb0_asy} then follows from Corollary~\ref{corollary: mb0_asy} together with Lemma~\ref{lemma: net_etaDiff}. The proof is an adaptation of \cite{meinshausen2006high} (proof of Theorem 1), to our more complex situation. Both of the proofs are mainly established with the property of chi-square distribution and the Lasso.  For any $\mathcal{A} \subset \wbK$, let the Lasso estimate $\wh{\btheta}^{a,\mathcal{A},\lambda, lasso}$ of $\btheta^{a,\mathcal{A}}$ be defined as
\begin{align} \label{def: thetaALasso}
	\wh{\btheta}^{a,\mathcal{A},\lambda,lasso}=\argmin_{\btheta: \theta_k=0,\forall k\notin\mathcal{A}}
	\Big(n^{-1}\|(\wh{\bEta}_a - \mu_a\bIn) - (\wh{\bH} - \bIn\bm{\mu})\btheta\|_2^2+\lambda\|\btheta\|_1\Big).
\end{align}
\begin{claim} \label{claim: mb2}
	For problem \eqref{def: thetaALasso}, under assumption C0, for any $q>1$, 
	\begin{align*}
		P\Big(\|\wh{\btheta}^{a,\mathcal{A},\lambda,lasso}\|_1\leq (q+\delta)^2\lambda^{-1}\Big)
		\geq
		1-\exp\Big(-\tfrac{q^2-\sqrt{2q^2-1}}{2}n\Big).
	\end{align*}
\end{claim}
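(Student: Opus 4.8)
The plan is to use the defining optimality of the Lasso together with a chi-square tail bound. Since $\wh{\btheta}^{a,\mathcal{A},\lambda,lasso}$ minimizes the objective in \eqref{def: thetaALasso} and $\btheta=\bm{0}$ is feasible (it satisfies $\theta_k=0$ for all $k\notin\mathcal{A}$), comparing the two objective values and discarding the nonnegative quadratic term evaluated at the minimizer gives
\[
\lambda\|\wh{\btheta}^{a,\mathcal{A},\lambda,lasso}\|_1 \leq n^{-1}\|\wh{\bEta}_a - \mu_a\bIn\|_2^2 .
\]
Hence it suffices to show that $n^{-1}\|\wh{\bEta}_a - \mu_a\bIn\|_2^2 \leq (q+\delta)^2$ with probability at least $1-\exp(-\tfrac{q^2-\sqrt{2q^2-1}}{2}n)$, since this immediately yields the asserted bound on $\|\wh{\btheta}^{a,\mathcal{A},\lambda,lasso}\|_1$.

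To control this quantity, I would split off the estimation error by writing $\wh{\bEta}_a - \mu_a\bIn = (\bEta_a - \mu_a\bIn) + (\wh{\bEta}_a - \bEta_a)$ and applying the triangle inequality,
\[
n^{-1/2}\|\wh{\bEta}_a - \mu_a\bIn\|_2 \leq n^{-1/2}\|\bEta_a - \mu_a\bIn\|_2 + n^{-1/2}\|\wh{\bEta}_a - \bEta_a\|_2 .
\]
Under assumption C0, Fact~\ref{claim: mb1} bounds the second summand by $\delta$ deterministically. It therefore remains to argue that $n^{-1/2}\|\bEta_a - \mu_a\bIn\|_2 \leq q$ with the stated probability, for then $n^{-1/2}\|\wh{\bEta}_a - \mu_a\bIn\|_2 \leq q+\delta$ and squaring completes the reduction.

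For the remaining term, the $n$ entries of $\bEta_a - \mu_a\bIn$ are i.i.d.\ $N(0,\sigma_{aa})=N(0,1)$ copies (using the normalization $\var(\eta_a)=1$), so $\|\bEta_a - \mu_a\bIn\|_2^2 \sim \chi^2_n$. I would then invoke the Laurent--Massart deviation inequality $P(\chi^2_n \geq n + 2\sqrt{nx} + 2x) \leq e^{-x}$, choosing $x=\tfrac{n}{2}(q^2-\sqrt{2q^2-1})$ so that $n + 2\sqrt{nx} + 2x = nq^2$ exactly; this produces
\[
P\big(\|\bEta_a - \mu_a\bIn\|_2^2 \geq nq^2\big) \leq \exp\Big(-\tfrac{q^2-\sqrt{2q^2-1}}{2}n\Big),
\]
which is precisely the failure probability in the statement. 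On the complementary event all the inequalities above hold, and chaining them gives $\|\wh{\btheta}^{a,\mathcal{A},\lambda,lasso}\|_1 \leq (q+\delta)^2\lambda^{-1}$.

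The only genuinely non-routine step is pinning down the correct chi-square tail inequality and the exact value of $x$: one has to solve the quadratic $2\sqrt{nx}+2x=n(q^2-1)$ in $\sqrt{x}$ to recognize that the clean exponent $\tfrac{q^2-\sqrt{2q^2-1}}{2}$ arises from the Laurent--Massart bound, rather than from the (differently shaped) optimized Chernoff bound $\tfrac{1}{2}(q^2-1-\log q^2)$. The optimality comparison, the triangle-inequality split, and the appeal to Fact~\ref{claim: mb1} are all routine bookkeeping.
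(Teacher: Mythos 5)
Your proposal is correct and follows essentially the same route as the paper's (very terse) proof: the optimality comparison giving $n\lambda\|\wh{\btheta}^{a,\mathcal{A},\lambda,lasso}\|_1\leq\|\wh{\bEta}_a-\mu_a\bIn\|_2^2$, the triangle-inequality split with Fact~\ref{claim: mb1} under C0, and the Laurent--Massart $\chi^2_n$ tail bound; your choice $x=\tfrac{n}{2}(q^2-\sqrt{2q^2-1})$ indeed makes $n+2\sqrt{nx}+2x=nq^2$ exactly and reproduces the stated exponent. You have simply filled in the details the paper leaves implicit.
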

\begin{proof}
	The claim follows directly from the tail bounds of the $\chi^2$-distribution~(see \cite{Laurent2000adaptive}) and the inequality 
	\begin{align*}
		n\lambda\|\wh{\btheta}^{a,\mathcal{A},\lambda,lasso}\|_1
		\leq
		\|\wh{\bEta}_a - \mu_a\bIn\|_2^2
		\leq
		\left(\|\wh{\bEta}_a-\bEta_a\|_2+\|\bEta_a - \mu_a\bIn\|_2\right)^2.
	\end{align*}
\end{proof}
\begin{lemma} \label{lemma: lasso_dual}
	Given $\btheta\in\mathbb{R}^{K}$, let $G(\btheta)$ be a $K$-dimensional vector with elements
	\begin{align*}
		G_b(\btheta) = -2n^{-1}\langle(\wh{\bEta}_a-\mu_a\bIn) - (\wh{\bH} - \bIn\bmu^T)\btheta, \wh{\bEta}_b - \mu_b\bIn\rangle.
	\end{align*}
	A vector $\wh{\btheta}$ with $\wh{\theta}_k=0, \forall k\in\wbK\backslash\mathcal{A}$ is a solution to \eqref{def: thetaALasso} iff for all $b\in\mathcal{A}$, $G_b(\wh{\btheta})=-\sign(\wh{\theta}_b)\lambda$ in case $\wh{\theta}_b\neq0$, and $|G_b(\wh{\btheta})|\leq\lambda$ in case $\wh{\theta}_b=0$. Moreover, if the solution is not unique, and $|G_b(\wh{\btheta})|<\lambda$ for some solution $\wh{\btheta}$, then $\wh{\theta}_b=0$ for all solutions of \eqref{def: thetaALasso}.
\end{lemma}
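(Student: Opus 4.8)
The plan is to treat Lemma~\ref{lemma: lasso_dual} as the standard subgradient (KKT) characterization of a Lasso minimizer, adapted to the restricted parameter set $\{\btheta: \theta_k = 0,\ \forall k \in \wbK\backslash\mathcal{A}\}$. I would write the objective in \eqref{def: thetaALasso} as $f(\btheta) = Q(\btheta) + \lambda\|\btheta\|_1$, where $Q(\btheta) = n^{-1}\|(\wh{\bEta}_a - \mu_a\bIn) - (\wh{\bH} - \bIn\bmu^T)\btheta\|_2^2$ is a convex quadratic and $\lambda\|\btheta\|_1$ is convex. A direct differentiation shows that $G_b(\btheta)$ is precisely $\partial Q/\partial\theta_b$ (the factor $-2n^{-1}$ and the inner product against the $b$-th column $\wh{\bEta}_b - \mu_b\bIn$ of $\wh{\bH} - \bIn\bmu^T$ come out exactly), so $G(\btheta) = \nabla Q(\btheta)$. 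Since $f$ is convex and finite everywhere, $\wh{\btheta}$ (with $\wh\theta_k = 0$ off $\mathcal{A}$) is a minimizer if and only if $\bm 0$ lies in the subdifferential of $f$ along the coordinates in $\mathcal{A}$, i.e. $-G_b(\wh\btheta) \in \lambda\,\partial|\wh\theta_b|$ for every $b\in\mathcal{A}$. Using $\partial|x| = \{\sign(x)\}$ for $x\neq 0$ and $\partial|0| = [-1,1]$, this splits into the two stated conditions: $G_b(\wh\btheta) = -\sign(\wh\theta_b)\lambda$ when $\wh\theta_b\neq 0$, and $|G_b(\wh\btheta)|\le\lambda$ when $\wh\theta_b = 0$. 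Convexity makes these conditions both necessary and sufficient, which establishes the first assertion.

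For the ``moreover'' part, the key step is to show that $G(\btheta)$ takes the \emph{same} value at every minimizer, even when the minimizer itself is not unique. Because $G$ depends on $\btheta$ only through the residual $(\wh{\bEta}_a - \mu_a\bIn) - (\wh{\bH} - \bIn\bmu^T)\btheta$, it suffices to show that the fitted vector $(\wh{\bH} - \bIn\bmu^T)\btheta$ is invariant across minimizers. I would argue this by strict convexity of the quadratic loss in the fitted vector: if two minimizers $\wh\btheta^{(1)}, \wh\btheta^{(2)}$ produced different fitted vectors, then their average $\tfrac12(\wh\btheta^{(1)}+\wh\btheta^{(2)})$ would strictly decrease $Q$ (the squared-error term is strictly convex as a function of the fitted value) while not increasing $\lambda\|\cdot\|_1$ (by convexity of the norm), so $f$ would be strictly smaller at the average, contradicting minimality. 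Hence all minimizers share the same residual and therefore the same $G$-vector.

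With that invariance in hand, the conclusion is immediate: if $|G_b(\wh\btheta)| < \lambda$ for some minimizer $\wh\btheta$, then $|G_b(\cdot)| < \lambda$ at \emph{every} minimizer; and by the first characterization, $\wh\theta_b\neq 0$ would force $|G_b| = \lambda$, a contradiction. Therefore $\wh\theta_b = 0$ at all solutions of \eqref{def: thetaALasso}.

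I expect the main obstacle to be the invariance-of-fitted-values step. In the high-dimensional regime the design $(\wh{\bH} - \bIn\bmu^T)$ is typically rank-deficient, so the minimizer $\btheta$ itself need not be unique and one cannot argue coordinatewise; the argument must instead pass through the fitted value, where strict convexity of the quadratic loss is genuinely available. Once that invariance is isolated, the remainder is routine convex-analytic bookkeeping with the subdifferential of the $\ell_1$ norm.
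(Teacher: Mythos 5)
Your proposal is correct and follows essentially the same route as the paper, which simply defers to the subgradient/KKT characterization in Lemma~A.1 of \cite{meinshausen2006high} (noting that Gaussianity plays no role); your write-out of the subdifferential conditions and the invariance of the fitted vector across minimizers is exactly the content of that standard argument, here made self-contained.
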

\noindent
This Lemma is almost the same as Lemma A.1 in \cite{meinshausen2006high} but without normality assumption of $\wh{\bH}$. Since the Gaussian assumption is not needed, the proof is a straightforward adaptation of the proof of Lemma A.1 in \cite{meinshausen2006high}. 
\begin{lemma} \label{lemma: lasso_sign}
	For every $a\in\wbK$, let $\wh{\btheta}^{a,\nb_a,\lambda,lasso}$ be defined as in \eqref{def: thetaALasso}. Let the penalty parameter satisfy $\lambda_n\sim dn^{-(1-\epsilon)/2}$ with some $d>0$ and $\kappa<\epsilon<\xi$. Suppose that assumptions A1 and C0 hold with  $\delta = o(n^{-(4-\xi-3\epsilon)/2})$.  Then there exists $c>0$ so that, for all $a\in\wbK$, 
	\begin{align*}
		P\big(\sign(\wh{\theta}_b^{a,\nb_a,\lambda,lasso})=\sign(\theta_b^a),\; \forall b\in\nb_a\big)
		=
		1 - O\big(\exp(-cn^\epsilon)\big)
		\text{ as }n\rightarrow\infty.
	\end{align*}
\end{lemma}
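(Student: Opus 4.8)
The plan is to adapt the sign-consistency argument of \cite{meinshausen2006high} for the Lasso restricted to the true neighborhood, while tracking the extra perturbation from replacing $\bH$ by $\wh{\bH}=\bH+\bR$. I would work on the intersection of the event in assumption C0 (where $|\bR|_\infty<\delta$, so Fact~\ref{claim: mb1} gives $\|\wh{\bEta}_k-\bEta_k\|_2\le\sqrt{n}\delta$) and the high-probability event of Claim~\ref{claim: mb2}, on which $\|\wh{\btheta}^{a,\nb_a,\lambda,lasso}\|_1=O(\lambda^{-1})=O(n^{(1-\epsilon)/2})$. The first step is to reduce the problem to an estimation-error versus minimal-signal comparison. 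Since $\theta_b^a=-d_{ab}/d_{aa}=\pi_{ab}\sqrt{d_{bb}/d_{aa}}$, and A1.2 gives $d_{aa}\le\upsilon^{-2}$ while $\var(\eta_a)=1$ forces $d_{bb}\ge1$, assumption A1.4 yields $\min_{b\in\nb_a}|\theta_b^a|\ge \upsilon c\,n^{-(1-\xi)/2}$. Hence it suffices to prove that, with probability $1-O(\exp(-cn^\epsilon))$, we have $\|\wh{\btheta}^{a,\nb_a,\lambda,lasso}-\btheta^a\|_\infty=o(n^{-(1-\xi)/2})$.

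Next I would invoke the KKT characterization of Lemma~\ref{lemma: lasso_dual}. Writing $\bZ$ for the centered design restricted to $\nb_a$ (the columns of $\wh{\bH}-\bIn\bmu^T$ indexed by $\nb_a$), $\wh{\bm C}=n^{-1}\bZ^T\bZ$ for the empirical Gram matrix, and $\bw=(\wh{\bEta}_a-\mu_a\bIn)-\bZ\btheta^a_{\nb_a}$ for the residual at the truth, stationarity gives the representation
\[
\wh{\btheta}^{a,\nb_a,\lambda,lasso}_{\nb_a}-\btheta^a_{\nb_a}
=\wh{\bm C}^{-1}\Big(n^{-1}\bZ^T\bw-\tfrac{\lambda}{2}\,\wh{\bm s}\Big),
\]
with $\wh{\bm s}$ a subgradient vector having entries in $[-1,1]$. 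I would then bound the $\ell_\infty$ norm of the right-hand side term by term. For the Gram matrix I would decompose $\wh{\bm C}$ into its population value, a centered empirical fluctuation controlled by $\chi^2$ tail bounds (\cite{Laurent2000adaptive}), and cross terms in $\bR$ bounded through $|\bR|_\infty<\delta$; combined with A1.2 (nonsingularity) and A1.3(b) (which keeps the relevant $\ell_1$/row-sum factors $O(1)$), this shows $\wh{\bm C}^{-1}$ exists and is well-conditioned with overwhelming probability. The noise cross term $n^{-1}\bZ^T\bw$ I would split into the Gaussian part $n^{-1}(\bH_{\nb_a}-\bIn\bmu_{\nb_a}^T)^T\bv_a$, whose coordinates, thresholded at order $\lambda\sim n^{-(1-\epsilon)/2}$, satisfy the bound with probability $1-O(\exp(-cn^\epsilon))$ (here $nt^2\sim n^\epsilon$), and which is $o(n^{-(1-\xi)/2})$ precisely because $\epsilon<\xi$; the remaining linear-in-$\bR$ pieces are $O(\delta)$ by Cauchy--Schwarz and Fact~\ref{claim: mb1}. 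The penalty contributes $O(\lambda)=o(n^{-(1-\xi)/2})$, again since $\epsilon<\xi$.

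The delicate step, which I expect to be the main obstacle, is the quadratic-in-$\bR$ perturbation. Because the restricted estimator is not a priori bounded by the constant $\vartheta$ but only by the crude bound $\|\wh{\btheta}\|_1=O(n^{(1-\epsilon)/2})$ from Claim~\ref{claim: mb2}, the cross terms arising from $\wh{\bm C}$ and $\bw$ of the form $n^{-1}\btheta^T(\bH^T\bR+\bR^T\bH+\bR^T\bR)\btheta$ evaluated at $\wh{\btheta}$ are of order $\delta\|\wh{\btheta}\|_1^2=O(\delta\,n^{1-\epsilon})$. Forcing this to be $o(n^{-(1-\xi)/2})$ is exactly the hypothesis $\delta=o(n^{-(4-\xi-3\epsilon)/2})$. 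This is where the lack of independence between $\bxi_a$ and $\bR_{-a}$ bites: unlike \cite{rosenbaum2010sparse}, I cannot decouple the measurement error from the design and integrate it out, so I must bound these cross terms deterministically via $|\bR|_\infty<\delta$, and the worst term is governed by the pessimistic $\ell_1$ bound on the solution rather than by the bounded true coefficients.

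Finally, collecting the estimates on the intersection of these high-probability events yields $\|\wh{\btheta}^{a,\nb_a,\lambda,lasso}-\btheta^a\|_\infty=o(n^{-(1-\xi)/2})$. Comparing with the minimal-signal lower bound $\min_{b\in\nb_a}|\theta_b^a|\ge\upsilon c\,n^{-(1-\xi)/2}$ forces $\sign(\wh{\theta}_b^{a,\nb_a,\lambda,lasso})=\sign(\theta_b^a)$ for every $b\in\nb_a$. A union bound over the $|\nb_a|=O(n^\kappa)$ coordinates (and the polynomially many entries entering the Gram and noise bounds) is absorbed by the $\exp(-cn^\epsilon)$ factor coming from the Gaussian tails at threshold $\lambda\sim n^{-(1-\epsilon)/2}$, giving the stated probability $1-O(\exp(-cn^\epsilon))$.
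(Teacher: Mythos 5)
Your overall strategy---reduce sign recovery to an $\ell_\infty$ estimation-error bound against the minimal signal $\min_{b\in\nb_a}|\theta_b^a|\ge c\upsilon n^{-(1-\xi)/2}$, obtained from the stationarity identity $\wh{\btheta}_{\nb_a}-\btheta^a_{\nb_a}=\wh{\bm C}^{-1}\big(n^{-1}\bZ^T\bw-\tfrac{\lambda}{2}\wh{\bm s}\big)$---is coherent, and you correctly locate both the minimal-signal computation and the place where $\delta=o(n^{-(4-\xi-3\epsilon)/2})$ gets consumed. The genuine gap is the step ``$\wh{\bm C}^{-1}$ exists and is well-conditioned.'' To push an $O(\lambda)$ bound on $\|n^{-1}\bZ^T\bw-\tfrac{\lambda}{2}\wh{\bm s}\|_\infty$ through to an $O(\lambda)$ bound on the $\ell_\infty$ error you need $\|\wh{\bm C}^{-1}\|_{\infty\to\infty}=O(1)$. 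Assumptions A1.2 and A1.3(b) do give $\|(\bSigma_{\nb_a,\nb_a})^{-1}\|_{\infty\to\infty}\le\upsilon^{-2}(1+\vartheta)=O(1)$ for the \emph{population} matrix (row $b$ of that inverse has absolute row sum $d_{bb}^{(\nb_a)}(1+\|\btheta^{b,\nb_a\backslash\{b\}}\|_1)$), but transferring this to the empirical Gram matrix requires $\|\wh{\bm C}-\bSigma_{\nb_a,\nb_a}\|_{\infty\to\infty}=o(1)$ on an event of probability $1-O(\exp(-cn^\epsilon))$. At that probability level the entrywise fluctuation is of order $n^{-(1-\epsilon)/2}$, so the $\infty\to\infty$ norm of the perturbation is only $O(|\nb_a|\,n^{-(1-\epsilon)/2})=O(n^{\kappa-(1-\epsilon)/2})$, which is \emph{not} $o(1)$ unless $\kappa<(1-\epsilon)/2$; the lemma assumes only $\kappa<\epsilon<\xi$ with $\kappa<1$. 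Routing through the operator norm instead costs a factor $\sqrt{|\nb_a|}=n^{\kappa/2}$ when converting back to $\ell_\infty$, which would force the extra condition $\epsilon+\kappa<\xi$, also not assumed. So, as written, your argument proves the lemma only under a substantially stronger sparsity condition than the one stated. (A smaller point: your quadratic-in-$\bR$ term is $O(\delta\lambda^{-2})$, whereas the binding term in the paper is $O(\delta\lambda^{-3})$ compared against $n^{-(1-\xi)/2}$, which is exactly where $(4-\xi-3\epsilon)/2$ comes from; your hypothesis covers your term but the identification is not ``exact.'')

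The paper avoids the inversion problem entirely by following the Meinshausen--B\"uhlmann one-dimensional perturbation device. If $\sign(\wh{\theta}_b^{a,\nb_a,\lambda,lasso})\ne\sign(\theta_b^a)$, then by Claim~\ref{claim: mb2} and Lemma~\ref{lemma: lasso_dual} some constrained solution $\wt{\btheta}^{a,b,\lambda}(\omega)$ with $\omega\,\sign(\theta_b^a)\le0$ and $|\omega|\le(2+\delta)^2\lambda^{-1}$ must satisfy $G_b(\wt{\btheta}^{a,b,\lambda}(\omega))\ge-\lambda$; one then lower-bounds $2n^{-1}\langle\br_a^{\lambda}(\omega),\wh{\bw}_b\rangle$ by splitting $\wh{\bw}_b$ into components parallel and orthogonal to ${\rm span}\{\bEta_k:k\in\nb_a\backslash\{b\}\}$ and using $2n^{-1}\langle\bw_b^{\bot},\bw_b^{\bot}\rangle\to\sigma_{w,b}^2\ge\upsilon^2$. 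The only place the neighborhood size enters is the $\chi^2_{|\nb_a|-1}$ bound on $\|\bw_b^{||}\|_2$, which needs only $|\nb_a|=o(n\lambda^2)=o(n^\epsilon)$, i.e.\ $\kappa<\epsilon$. To keep your inversion-based route you would have to add a condition like $\kappa<(1-\epsilon)/2$; to prove the lemma as stated you need the $\omega$-perturbation argument.
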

\begin{proof}
	Using similar notation as in \cite{meinshausen2006high}, we set
	\begin{align} \label{def: thetaNeLasso}
		\wh{\btheta}^{a,\nb_a,\lambda,lasso}=\argmin\limits_{\btheta: \theta_k=0,\forall k\notin\nb_a}\Big(n^{-1}\|(\wh{\bEta}_a-\mu_a\bIn) - (\wh{\bH} - \bIn{\bmu}^T)\btheta\|_2^2+\lambda\|\btheta\|_1\Big), 
	\end{align}
	and for all $a,b\in\wbK$ with $b\in\nb_a$, we let 
	\begin{align} \label{def: thetaOmegaLasso}
		\wt{\btheta}^{a,b,\lambda}(\omega)=\argmin\limits_{\btheta\in\Theta_{a,b}(\omega)}\Big(n^{-1}\|(\wh{\bEta}_a-\mu_a\bIn) - (\wh{\bH} - \bIn{\bmu}^T)\btheta\|_2^2+\lambda\|\btheta\|_1\Big),
	\end{align}
	where
	\begin{align*}
		\Theta_{a,b}(\omega)=\big\{\btheta\in\mathbb{R}^{K(n)}:\theta_b=\omega;\theta_k=0,\forall k\notin\nb_a\big\}.
	\end{align*}
	Setting $\omega=\wh{\theta}_b^{a,\nb_a,\lambda,lasso}$, then  $\wt{\btheta}^{a,b,\lambda}(\omega)= \wh{\btheta}^{a,\nb_a,\lambda,lasso}$,  and by Claim~\ref{claim: mb2} with $q=2$, 
	\begin{align*}
		P\Big(|\wh{\theta}_b^{a,\nb_a,\lambda,lasso}|\leq(2+\delta)^2\lambda^{-1}\Big)
		\geq
		1-\exp\left(-2^{-1}n\right).
	\end{align*}
	Thus, if $\sign(\wh{\theta}_b^{a,\nb_a,\lambda,lasso})\neq\sign(\theta_b^a)$,  with probability at least $1-\exp(2^{-1}n)$, there would exist some $\omega$ with $|\omega| \leq (2+\delta)^2\lambda^{-1}$ so that  $\wt{\btheta}^{a,b,\lambda}(\omega)$ is a solution to \eqref{def: thetaNeLasso} but $\sign(\omega)\sign(\theta_b^a)\leq0$. 
	Without loss of generality, we assume $\theta_b^a>0$ since $\sign(\theta_b^a)\neq0$ for all $b\in\nb_a$. Note that by Lemma~\ref{lemma: lasso_dual}, $\wt{\btheta}^{a,b,\lambda}(\omega)$ can be a solution to \eqref{def: thetaNeLasso} only if $G_b(\wt{\btheta}^{a,b,\lambda}(\omega))\geq-\lambda$ when $\omega\leq0$. This means
	\begin{multline} \label{Pineq: sign}
		\quad P\Big(\sign(\wh{\theta}_b^{a,\nb_a,\lambda,lasso})\neq\sign(\theta_b^a)\Big) \\
		\leq P\Big(\sup_{-\lambda^{-1}(2+\delta)^2\leq\omega\leq0}G_b(\wt{\btheta}^{a,b,\lambda}(\omega))\geq-\lambda\Big)+\exp\left(-2^{-1}n\right). 
	\end{multline}
	Let $\br_a^{\lambda}(\omega)=(\wh{\bEta}_a - \mu_a\bIn) - (\wh{\bH} - \bIn{\bmu}^T)\wt{\btheta}^{a,b,\lambda}(\omega)$
	and write $\wh{\eta}_b$, $\eta_b$ as
	\begin{align*} 
		\wh{\eta}_b - \mu_b = \sum_{k\in\nb_a\backslash\{b\}}\theta_k^{b,\nb_a\backslash\{b\}}(\wh{\eta}_k - \mu_k) + \wh{w}_b 
	\end{align*}
	and 
	\begin{align*}
		\eta_b - \mu_b = \sum_{k\in\nb_a\backslash\{b\}}\theta_k^{b,\nb_a\backslash\{b\}}(\eta_k - \mu_k) + w_b;  
	\end{align*}
	and $\wh{\eta}_a$, $\eta_a$ as
	
	\begin{align*}
		\wh{\eta}_a - \mu_a = \sum_{k\in\nb_a}\theta_k^a(\wh{\eta}_k - \mu_k) + \wh{v}_a
		\quad\text{and}\quad
		\eta_a - \mu_a = \sum_{k\in\nb_a}\theta_k^a(\eta_k - \mu_k) + v_a, 
	\end{align*}
	where $v_a$ and $w_b$ are independent normally distributed random variables with variances $\sigma_{-a}^2$ and $\sigma_{w,b}^2$, respectivley, and $0<\upsilon^2\leq\sigma_{-a}^2, \sigma_{w,b}^2\leq1$ by A1.2. Now we can write 
	\begin{align} \label{eq: etaVW}
		\eta_a - \mu_a = \sum_{k\in\nb_a\backslash\{b\}}(\theta_k^a+\theta_b^a\theta_k^{b,\nb_a\backslash\{b\}})(\eta_k - \mu_k) + \theta_b^aw_b+v_a. 
	\end{align}
	As in \cite{meinshausen2006high}, split the $n$-dimensional vector $\wh{\bw}_b$ of observations of $\wh{w}_b$,  and also the vector $\bw_b$ of observations of $w_b$ into the sum of two vectors, respectively, 
	\begin{align*} 
		\wh{\bw}_b=\wh{\bw}_b^{\bot}+\wh{\bw}_b^{||}
		\quad\text{and}\quad
		\bw_b=\bw_b^{\bot}+\bw_b^{||},
	\end{align*}
	where $\bw_b^{||}$ and $\wh{\bw}_b^{||}$ are contained in the at most $(|\nb_a|-1)$-dimensional space $\mathbb{W}^{||}$ spanned by the vectors $\{\bEta_k: k\in\nb_a\backslash\{b\}\}$, while  $\bw_b^{\bot}$ and $\wh{\bw}_b^{\bot}$ are contained in the orthogonal complement $\mathbb{W}^{\bot}$ of $\mathbb{W}^{||}$ in $\mathbb{R}^n$. Following the proof of \cite{meinshausen2006high} (Appendix, Lemma A.2), we have
	\begin{align} \label{ineq: G}
		G_b(\wt{\btheta}^{a,b,\lambda}(\omega))
		\leq 
		-2n^{-1}\langle\br_a^{\lambda}(\omega),\wh{\bw}_b\rangle+\lambda\vartheta, 
	\end{align}
	where $2n^{-1}\langle\br_a^{\lambda}(\omega),\wh{\bw}_b\rangle$ can be written as $2n^{-1}\langle\br_a^{\lambda}(\omega),\wh{\bw}_b^{\bot}\rangle+2n^{-1}\langle\br_a^{\lambda}(\omega),\wh{\bw}_b^{||}\rangle$.
	By definition of $\br_a^{\lambda}(\omega)$, the orthogonality property of $\wh{\bw}_b^{\bot}$, and \eqref{eq: etaVW},
	\begin{align} \label{ineq: main_Psign}
		2n^{-1}\langle\br_a^{\lambda}(\omega),\wh{\bw}_b^{\bot}\rangle
		&=
		2n^{-1}\langle(\wh{\bEta}_a - \mu_a\bIn) - (\wh{\bH} - \bIn{\bmu}^T)\wt{\btheta}^{a,b,\lambda}(\omega),\wh{\bw}_b^{\bot}\rangle\nonumber\\
		&=
		2n^{-1}\langle(\bEta_a - \mu_a\bIn)-(\bH - \bIn{\bmu}^T)\wt{\btheta}^{a,b,\lambda}(\omega),\wh{\bw}_b^{\bot}\rangle\nonumber\\
		&\qquad\quad + 2n^{-1}\langle(\wh{\bEta}_a-\bEta_a)-(\wh{\bH}-\bH)\wt{\btheta}^{a,b,\lambda}(\omega),\wh{\bw}_b^{\bot}\rangle\nonumber\\
		&=
		2n^{-1}(\theta_b^a-\omega)\langle\bw_b^{\bot},\wh{\bw}_b^{\bot}\rangle+2n^{-1}\langle\bv_a,\wh{\bw}_b^{\bot}\rangle\nonumber\\
		&\qquad\quad + 2n^{-1}\langle(\wh{\bEta}_a-\bEta_a)-(\wh{\bH}-\bH)\wt{\btheta}^{a,b,\lambda}(\omega),\wh{\bw}_b^{\bot}\rangle\nonumber\\
		&\geq
		2n^{-1}(\theta_b^a-\omega)\langle\bw_b^{\bot},\wh{\bw}_b^{\bot}\rangle-|2n^{-1}\langle\bv_a,\wh{\bw}_b^{\bot}\rangle|\nonumber\\
		&\qquad\quad
		- 2n^{-1}\|(\wh{\bEta}_a-\bEta_a)-(\wh{\bH}-\bH)\wt{\btheta}^{a,b,\lambda}(\omega)\|_2\|\wh{\bw}_b^{\bot}\|_2.
	\end{align}
	\begin{claim} \label{claim: mb3}
		$\|\wh{\bw}_b-\bw_b\|_2\leq (\vartheta+1)\sqrt{n}\delta$ under assumption C0.
	\end{claim}
	\begin{proof}
		By assumption $\|\btheta^{b,\nb_a\backslash\{b\}}\|_1\leq\vartheta$, an application of the triangle inequality and Claim~\ref{claim: mb1} gives the assertion. 
	\end{proof}
	In order to estimate the second term $|2n^{-1}\langle\bv_a,\wh{\bw}_b^{\bot}\rangle|$, we first consider $|2n^{-1}\langle\bv_a,\bw_b^{\bot}\rangle|,$ which has already been estimated in \cite{meinshausen2006high}: for every $g>0$, there exists some $c=c(g,d)>0$ so that, 
	\begin{align} \label{Pineq: VW1}
		P\left(|2n^{-1}\langle\bv_a,\bw_b^{\bot}\rangle|\geq g\lambda\right)
		\leq 
		P\left(|2n^{-1}\langle\bv_a,\bw_b\rangle|\geq g\lambda\right) 
		= O(\exp(-cn^{\epsilon})) \quad \text{as }n\rightarrow \infty.
	\end{align}
	Then for the difference $||\langle\bv_a,\wh{\bw}_b^{\bot}\rangle|-|\langle\bv_a,\bw_b^{\bot}\rangle||$, we have
	\begin{align} \label{Pineq: VW2}
		P\left(2n^{-1}| |\langle\bv_a,\wh{\bw}_b^{\bot}\rangle|-|\langle\bv_a,\bw_b^{\bot}\rangle||\leq 4(\vartheta+1)\delta\right)
		\geq 
		1 - \exp(-2^{-1}n), 
	\end{align}
	which follows from the inequality
	\begin{align*}
		\big||\langle\bv_a,\wh{\bw}_b^{\bot}\rangle|-|\langle\bv_a,\bw_b^{\bot}\rangle|\big|
		\leq
		\|\bv_a\|_2\|\wh{\bw}_b^{\bot}-\bw_b^{\bot}\|_2
		\leq
		(\vartheta+1)\sqrt{n}\delta\|\bv_a\|_2
	\end{align*}
	together with $\|\bv_a\|_2\sim\sigma_{-a}\sqrt{\chi_n^2}$. 
	Thus, by \eqref{Pineq: VW1} and \eqref{Pineq: VW2},
	\begin{align} \label{Pineq: VW}
		P\left(|2n^{-1}\langle\bv_a,\wh{\bw}_b^{\bot}\rangle|\geq g\lambda+4(\vartheta+1)\delta\right)
		= O(\exp(-cn^{\epsilon}))
		\quad\text{as }n\rightarrow\infty.
	\end{align}
	Similarly, we have 
	\begin{align*}
		P\left(2n^{-1}|\langle\bw_b^{\bot},\wh{\bw}_b^{\bot}\rangle-\langle\bw_b^{\bot},\bw_b^{\bot}\rangle|\leq 4(\vartheta+1)\delta\right) 
		\geq 
		1-\exp\left(-2^{-1}n\right).
	\end{align*}
	Note that $\sigma_{w,b}^{-2}\langle\bw_b^{\bot},\bw_b^{\bot}\rangle$ follows a $\chi^2_{n-|\nb_a|+1}$ distribution for $n \ge |\nb_a|$. Using again the tail bound of the $\chi^2$-distribution from \cite{Laurent2000adaptive}, we obtain with assumption A.1.3.(a) and $\sigma_{w,b}^2\geq \upsilon^2$, that there exists $n_0$ so that for $n>n_0$,
	\begin{align*}
		P(2n^{-1}\langle\bw_b^{\bot},\bw_b^{\bot}\rangle>\upsilon^2)
		\geq 
		1-\exp\left(-32^{-1}n\right).
	\end{align*}
	It follows that, 
	\begin{align} \label{Pineq: W}
		P\left(2n^{-1}\langle\bw_b^{\bot},\wh{\bw}_b^{\bot}\rangle>\upsilon^2-4(\vartheta+1)\delta\right) = O(\exp(-cn^{\epsilon}))
		\quad\text{as }n\rightarrow\infty.
	\end{align}
	For the third term of \eqref{ineq: main_Psign}, note that by definition of $\wt{\btheta}^{a,b,\lambda}(\omega)$, 
	\begin{align*}
		\|(\wh{\bEta}_a-\bEta_a)-(\wh{\bH}-\bH)\wt{\btheta}^{a,b,\lambda}(\omega)\|_2
		&\leq
		\|\wh{\bEta}_a-\bEta_a\|_2+\sum_{k\in\nb_a}|\wt{\theta}^{a,b,\lambda}_k(\omega)|\|\wh{\bEta}_k-\bEta_k\|_2 \nonumber \\
		&\leq
		\sqrt{n}\delta\big(1+\|\wt{\theta}^{a,b,\lambda}(\omega)\|_1\big)
	\end{align*}
	and we also have 
	\begin{align*}
		\|\wt{\btheta}^{a,b,\lambda}(\omega)\|_1-|\omega|
		& \leq
		n^{-1}\|(\wh{\bEta}_a - \mu_a\bIn)-\omega(\wh{\bEta}_b - \mu_b\bIn)\|_2^2\lambda^{-1} \\
		& \leq
		\left(n^{-1/2}\|\bEta_a - \mu_a\bIn\|_2+|\omega|n^{-1/2}\|\bEta_b - \mu_b\bIn\|_2+\delta(1+|\omega|)\right)^2\lambda^{-1}. 
	\end{align*}
	Together with $\|\wh{\bw}_b\|_2\leq\|\bw_b\|_2+\|\wh{\bw}_b-\bw_b\|
	\leq
	\|\bw_b\|_2+(\vartheta+1)\sqrt{n}\delta$, and the property of the $\chi^2$-distribution, we have with probability at least
	$1-3\exp\left(-2^{-1}n\right)$, 
	\begin{multline} \label{ineq: Weta}
		2n^{-1}\|(\wh{\bEta}_a-\bEta_a)-(\wh{\bH}-\bH)\wt{\btheta}^{a,b,\lambda}(\omega)\|_2\|\wh{\bw}_b\|_2\\
		\leq 2\left(1+|\omega|+(1+|\omega|)^2(2+\delta)^2\lambda^{-1}\right)\left(2+(\vartheta+1)\delta\right)\delta.
	\end{multline}
	Using \eqref{ineq: main_Psign}, \eqref{Pineq: VW}, \eqref{Pineq: W} and \eqref{ineq: Weta}, we obatin that with probability $1-O(\exp(-cn^\epsilon))$, as $n\rightarrow\infty$,  
	\begin{multline*} 
		2n^{-1}\langle\br_a^{\lambda}(\omega),\wh{\bw}_b^{\bot}\rangle
		\geq 
		(\theta_b^a-\omega)\left(\upsilon^2-4(\vartheta+1)\delta\right)-g\lambda-4(\vartheta+1)\delta \\
		-2\left(1+|\omega|+(1+|\omega|)^2(2+\delta)^2\lambda^{-1}\right)\left(2+(\vartheta+1)\delta\right)\delta.
	\end{multline*}
	Moreover, as will be shown in Lemma~\ref{lemma: lasso_ineq}, there exists $n_g=n(g)$ so that, for all $n\geq n_g$, 
	\begin{multline*} 
		P\Big(\inf_{\omega\leq0}\{2n^{-1}\langle\br_a^{\lambda}(\omega),\wh{\bw}_b^{||}\rangle/(1+|\omega|)\}
		\geq
		-2\left(g\lambda+(\vartheta+1)\delta\right)(2+\delta)\Big) \\
		\geq 
		1-2\exp\left(-2^{-1}n\right) - \exp\left(-4^{-1}g^2n\lambda^2\right).
	\end{multline*}
	Thus, with probability $1-O(\exp(-cn^\epsilon))$, as $n\rightarrow\infty$, 
	\begin{align} \label{ineq: RW}
		2n^{-1}\langle\br_a^{\lambda}(\omega),\wh{\bw}_b\rangle
		\geq
		& (\theta_b^a-\omega)\left(\sigma_b^2-4(\vartheta+1)\delta\right)-g\lambda-4(\vartheta+1)\delta \nonumber\\
		-&
		2\left(1+|\omega|+(1+|\omega|)^2(2+\delta)^2\lambda^{-1}\right)\left(2+(\vartheta+1)\delta\right)\delta \nonumber\\
		- & 2\left(1+|\omega|\right)\left(g\lambda+(\vartheta+1)\delta\right)(2+\delta).
	\end{align}
	Note that $\lambda\sim dn^{-(1-\epsilon)/2}$ with $\epsilon < \xi$, and by A1.2 and A1.4, we have
	\begin{align*}
		|\theta_b^a|
		=
		|\pi_{ab}|\sqrt{\frac{\var(\eta_b|\bEta_{\wbK\backslash\{b\}})}{\var(\eta_a|\bEta_{\wbK\backslash\{a\}})}} \geq v\pi_{ab} \geq cvn^{-(1-\xi)/2}. 
	\end{align*}
	Together with \eqref{ineq: RW}, for $\delta=o(n^{-(4-\xi-3\epsilon)/2})$, we have for any $l>0$ that  
	\begin{align*} 
		P\Big(\inf_{-(2+\delta)^2\lambda^{-1}\leq\omega\leq0}\{2n^{-1}\langle\br_a^{\lambda}(\omega),\wh{\bw}_b\rangle\}>l\lambda\Big)
		= 1 - O(\exp(-cn^\epsilon))
		\quad\text{ as }n\rightarrow\infty.
	\end{align*}
	Choosing $l=\vartheta+1$ and using \eqref{ineq: G}, we have 
	\begin{align*}
		P\Big(\sup_{-(2+\delta)^2\lambda^{-1}\leq\omega\leq0}G_b(\wt{\btheta}^{a,b,\lambda}(\omega))<-\lambda\Big)
		= 1-O(\exp(-cn^\epsilon))
		\quad \text{as }n\rightarrow\infty.
	\end{align*}
	Then, by Bonferroni's inequality, assumption A1.3.(a) and \eqref{Pineq: sign}, 
	\begin{align*}
		P\big(\sign(\wh{\theta}_b^{a,\nb_a,\lambda,lasso})=\sign(\theta_b^a),\,\forall b\in\nb_a\big) 
		= 
		1 - O\big(\exp(-cn^\epsilon) 
		\quad \text{as }n\rightarrow\infty. 
	\end{align*}
\end{proof}

\begin{lemma} \label{lemma: lasso_ineq}
	Under assumption C0, for any $g>0$, there exists $n_g=n(g)$ so that, for all $n\geq n_g$, 
	\begin{multline*}
		P\Big(\inf_{\omega\leq0}\{2n^{-1}\langle\br_a^{\lambda}(\omega),\wh{\bw}_b^{||}\rangle/(1+|\omega|)\}
		\geq
		-2\left(g\lambda+(\vartheta+1)\delta\right)(2+\delta)\Big) \\
		\geq 
		1-2\exp\left(-2^{-1}n\right) - \exp\left(4^{-1}g^2n\lambda^2\right).
	\end{multline*}
\end{lemma}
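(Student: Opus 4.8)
The plan is to reduce the claim to two uniform-in-$\omega$ norm estimates followed by a single Cauchy--Schwarz step, streamlining the parallel-component analysis in the appendix of \cite{meinshausen2006high} by exploiting that $\wh{\bw}_b^{||}$ lives in a low-dimensional subspace. Writing $\br_a^{\lambda}(\omega)=(\wh{\bEta}_a-\mu_a\bIn)-(\wh{\bH}-\bIn\bmu^T)\wt{\btheta}^{a,b,\lambda}(\omega)$, the target rearranges to $2n^{-1}\langle\br_a^{\lambda}(\omega),\wh{\bw}_b^{||}\rangle\ge-2(g\lambda+(\vartheta+1)\delta)(2+\delta)(1+|\omega|)$, so it suffices to bound $\|\br_a^{\lambda}(\omega)\|_2$ and $\|\wh{\bw}_b^{||}\|_2$ uniformly over $\omega\le0$ and to peel off the estimation error in $\wh{\bw}_b^{||}$ via Claim~\ref{claim: mb3}. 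Throughout I work in the regime in which the lemma is applied, $\lambda=\lambda_n\sim dn^{-(1-\epsilon)/2}$ with $\kappa<\epsilon<\xi$ from A1.3--A1.4.

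For the residual I would use feasibility: evaluating the objective of \eqref{def: thetaOmegaLasso} at the admissible point $\btheta=\omega\bm{e}_b$ gives $n^{-1}\|\br_a^{\lambda}(\omega)\|_2^2\le n^{-1}\|(\wh{\bEta}_a-\mu_a\bIn)-\omega(\wh{\bEta}_b-\mu_b\bIn)\|_2^2+\lambda|\omega|$. On the event where $n^{-1/2}\|\bEta_a-\mu_a\bIn\|_2$ and $n^{-1/2}\|\bEta_b-\mu_b\bIn\|_2$ are at most $2$---a $\chi^2_n$-concentration event of probability at least $1-2\exp(-n/2)$ by the Laurent--Massart bound \citep{Laurent2000adaptive}---together with Fact~\ref{claim: mb1} (giving $\|\wh{\bEta}_k-\bEta_k\|_2\le\sqrt{n}\delta$) and the lower-order term $\lambda|\omega|$, this yields $\|\br_a^{\lambda}(\omega)\|_2\le\sqrt{n}\,(2+\delta)(1+|\omega|)$ for all $\omega\le0$ simultaneously once $n\ge n_g$; the crucial feature is that the bound is \emph{linear} in $1+|\omega|$.

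For the projection I would exploit that, by Gaussianity, the population residual $w_b$ is independent of $\{\eta_k:k\in\nb_a\backslash\{b\}\}$, so that $\bw_b$ is independent of the at most $(|\nb_a|-1)$-dimensional subspace $\mathbb{W}^{||}$; conditioning on $\mathbb{W}^{||}$ gives $\|\bw_b^{||}\|_2^2/\sigma_{w,b}^2\sim\chi^2_r$ with $r\le|\nb_a|-1$ and $\sigma_{w,b}^2\le1$. Since A1.3(a) gives $|\nb_a|=O(n^\kappa)$ while $g^2\lambda^2n\sim g^2d^2n^{\epsilon}$ with $\epsilon>\kappa$, the level $g^2\lambda^2n$ eventually dominates the mean $r$, so the upper $\chi^2$ tail of \cite{Laurent2000adaptive} yields $P(\|\bw_b^{||}\|_2>g\lambda\sqrt{n})\le\exp(-g^2n\lambda^2/4)$ for $n\ge n_g$. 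Combining this with Claim~\ref{claim: mb3} (since orthogonal projection is a contraction, $\|\wh{\bw}_b^{||}-\bw_b^{||}\|_2\le(\vartheta+1)\sqrt{n}\delta$) and the residual bound, Cauchy--Schwarz produces $2n^{-1}|\langle\br_a^{\lambda}(\omega),\bw_b^{||}\rangle|\le2g\lambda(2+\delta)(1+|\omega|)$ and $2n^{-1}|\langle\br_a^{\lambda}(\omega),\wh{\bw}_b^{||}-\bw_b^{||}\rangle|\le2(\vartheta+1)\delta(2+\delta)(1+|\omega|)$, whose sum is exactly the claimed bound; uniformity in $\omega$ is automatic because neither projection norm depends on $\omega$ and the residual estimate is uniform.

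The step I expect to be the main obstacle is pinning down the correct scale at which to concentrate $\|\bw_b^{||}\|_2^2$. The inequality $\epsilon>\kappa$ is precisely what makes the target level $g^2\lambda^2n$ exceed the mean $r\le|\nb_a|-1$, placing the deviation in the regime where the $\chi^2$ upper tail decays like $\exp(-\tfrac14 g^2n\lambda^2)$ rather than the weaker $\exp(-cn^{\kappa})$; recognizing this, and simultaneously justifying the conditional-$\chi^2$ law of the projection (which rests on the Gaussian independence of $w_b$ from the conditioning variables and on $\sigma_{w,b}^2\le1$ from A1.2), is the delicate part. By contrast, the matrix-uncertainty contributions enter only through the harmless $O(\sqrt{n}\delta)$ terms supplied by Fact~\ref{claim: mb1} and Claim~\ref{claim: mb3}, and the uniformity over $\omega\le0$ costs nothing once the linear-in-$(1+|\omega|)$ residual bound is in hand.
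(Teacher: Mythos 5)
Your proposal is correct and follows essentially the same route as the paper's proof: a Cauchy--Schwarz split of $2n^{-1}\langle\br_a^{\lambda}(\omega),\wh{\bw}_b^{||}\rangle$, a uniform-in-$\omega$ bound on $n^{-1/2}\|\br_a^{\lambda}(\omega)\|_2/(1+|\omega|)$ via feasibility of $\omega\bm{e}_b$ and $\chi^2_n$ concentration plus Fact~\ref{claim: mb1}, the conditional $\chi^2_{|\nb_a|-1}$ tail for $\|\bw_b^{||}\|_2$ using $|\nb_a|=o(n\lambda^2)$, and Claim~\ref{claim: mb3} for the $(\vartheta+1)\sqrt{n}\delta$ correction. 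Your additional justification of the conditional $\chi^2$ law of the projection, and your observation that the $\lambda|\omega|$ term in the feasibility comparison in fact cancels against $\lambda\|\wt{\btheta}^{a,b,\lambda}(\omega)\|_1\ge\lambda|\omega|$, only make explicit what the paper leaves implicit (and the paper's statement of the tail term should of course read $\exp(-4^{-1}g^{2}n\lambda^{2})$, as both derivations produce).
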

\begin{proof}
	Again following similar arguments as in \cite{meinshausen2006high} (Appendix, Lemma A.3), we have
	\begin{align*}
		|2n^{-1}\langle\br_a^{\lambda}(\omega),\wh{\bw}_b^{||}\rangle|/(1+|\omega|)
		\leq
		2n^{-1/2}\|\wh{\bw}_b^{||}\|_2\frac{n^{-1/2}\|\br_a^{\lambda}(\omega)\|_2}{1+|\omega|}
	\end{align*}
	and 
	\begin{align*}
		P\Big(\sup_{\omega\in\mathbb{R}}\frac{n^{-1/2}\|\br_a^{\lambda}(\omega)\|_2}{1+|\omega|} > 2+\delta\Big)
		&\leq 
		P\left(n^{-1/2}\max\{\|\wh{\bEta}_a - \mu_a\bIn\|_2,\|\wh{\bEta}_b - \mu_b\bIn\|_2\}>2+\delta\right)\nonumber\\
		&\leq 
		2\exp\left(-2^{-1}n\right).
	\end{align*}
	The last inequality uses that $\|\bEta_k - \mu_k\bIn\|_2^2\sim \chi_n^2$ and $\|\wh{\boldsymbol \eta}_k  - {\boldsymbol \eta}_k\|_2 \leq \sqrt{n}\delta$.
	Note that $\sigma_{w,b}^{-2}\langle\bw_b^{||},\bw_b^{||}\rangle$ follows a $\chi^2_{|\nb_a|-1}$ distribution for large $n$ and $|\nb_a|=o(n\lambda^2)$, and thus for any $g>0$, there exists $n_g=n(g)$ so that for all $n\geq n_g$, 
	\begin{align*}
		P\big(n^{-1/2}\|\bw_b^{||}\|_2>g\lambda\big)
		\leq
		\exp\left(-4^{-1}g^2n\lambda^2\right).
	\end{align*}
	Together with Claim~\ref{claim: mb3}, for any $g>0$, there exists $n_g=n(g)$ so that, for all $n\geq n_g$, 
	\begin{align*}
		P\big(n^{-1/2}\|\wh{\bw}_b^{||}\|_2 > g\lambda+(\vartheta+1)\delta\big)  
		\leq
		\exp\left(-4^{-1}g^2n\lambda^2\right), 
	\end{align*}
	and thus, 
	\begin{multline*}
		P\Big(\sup_{\omega\in\mathbb{R}}\big\{|2n^{-1}\langle\br_a^{\lambda}(\omega),\wh{\bw}_b^{||}\rangle|/\left(1+|\omega|\right)\big\}
		\leq 
		2\left(g\lambda+(\vartheta+1)\delta\right)(2+\delta)\Big) \\
		\geq 
		1 - \exp\left(-4^{-1}g^2n\lambda^2\right) - 2\exp\left(-2^{-1}n\right). 
	\end{multline*}
\end{proof}

\begin{theorem} \label{thm: lasso_type1}
	Assume that A1 holds and that $\bmu$ is known. Let the penalty parameter satisfy $\lambda_n\sim dn^{-(1-\epsilon)/2}$ with $d>0$ and $\kappa<\epsilon<\xi$. If, in addition, C0 holds with $\delta=o(n^{\min\{-(4-\xi-3\epsilon)/2, \epsilon-\kappa-1\}})$, then for all $a\in\wbK$, 
	\begin{align*}
		P\big(\wh{\nb}_a^{\lambda,lasso}\subseteq\nb_a\big)
		= 1 - O\big(\exp(-cn^\epsilon)\big)
		\quad
		\text{as }n\rightarrow\infty.
	\end{align*} 
\end{theorem}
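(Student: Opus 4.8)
The plan is to follow the ``restricted-then-verify'' strategy of \cite{meinshausen2006high}: work with the Lasso solution $\wh{\btheta}^{a,\nb_a,\lambda,lasso}$ whose support is constrained to the true neighborhood $\nb_a$ (problem \eqref{def: thetaALasso} with $\mathcal{A}=\nb_a$), show it is in fact a solution of the unconstrained problem \eqref{def: thetaLasso}, and that the dual inequality of Lemma~\ref{lemma: lasso_dual} holds \emph{strictly} at every $b\notin\nb_a$. By the last assertion of Lemma~\ref{lemma: lasso_dual}, strict dual feasibility forces $\wh{\theta}_b=0$ for every solution of \eqref{def: thetaLasso}, which is exactly $\wh{\nb}_a^{\lambda,lasso}\subseteq\nb_a$.

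Concretely, write $\wh{\btheta}=\wh{\btheta}^{a,\nb_a,\lambda,lasso}$ and let $\br_a^{\lambda}=(\wh{\bEta}_a-\mu_a\bIn)-(\wh{\bH}-\bIn\bmu^T)\wh{\btheta}$ be its residual. By Lemma~\ref{lemma: lasso_sign}, on an event of probability $1-O(\exp(-cn^\epsilon))$ we have $\sign(\wh{\theta}_k)=\sign(\theta_k^a)\neq 0$ for all $k\in\nb_a$, so every $k\in\nb_a$ is active and Lemma~\ref{lemma: lasso_dual} gives $G_k(\wh{\btheta})=-\sign(\theta_k^a)\lambda$. For $b\notin\nb_a$, decompose $\wh{\bEta}_b-\mu_b\bIn=\sum_{k\in\nb_a}\theta_k^{b,\nb_a}(\wh{\bEta}_k-\mu_k\bIn)+\wh{\bw}_b$ with the population regression coefficients $\theta_k^{b,\nb_a}$ of $\eta_b$ on $\{\eta_k:k\in\nb_a\}$. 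Substituting into $G_b(\wh{\btheta})$, using $G_k(\wh{\btheta})=-\sign(\theta_k^a)\lambda$ and $\theta_k^{a,\nb_a}=\theta_k^a$ on $\nb_a$, yields
\begin{align*}
	G_b(\wh{\btheta})=-\lambda\, S_a(b)-2n^{-1}\langle\br_a^{\lambda},\wh{\bw}_b\rangle,
\end{align*}
where $S_a(b)=\sum_{k\in\nb_a}\sign(\theta_k^{a,\nb_a})\theta_k^{b,\nb_a}$ is exactly the quantity controlled by the neighborhood-stability assumption A1.5. Hence $|G_b(\wh{\btheta})|\leq\varrho\lambda+2n^{-1}|\langle\br_a^{\lambda},\wh{\bw}_b\rangle|$, and it suffices to show $\max_{b\notin\nb_a}2n^{-1}|\langle\br_a^{\lambda},\wh{\bw}_b\rangle|<(1-\varrho)\lambda$ with probability $1-O(\exp(-cn^\epsilon))$.

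The cross term is handled as in Lemma~\ref{lemma: lasso_sign}: split $\wh{\bw}_b=\wh{\bw}_b^{\bot}+\wh{\bw}_b^{||}$ relative to the span of $\{\bEta_k:k\in\nb_a\}$, bound the parallel part by Lemma~\ref{lemma: lasso_ineq}, and for the perpendicular part exploit that the population residual $\bv_a$ of the $a$-regression is independent of $w_b$ (a function of $\{\eta_k:k\neq a\}$), so the leading term $2n^{-1}\langle\bv_a,\bw_b^{\bot}\rangle$ concentrates below $g\lambda$ for any fixed $g>0$ by the argument behind \eqref{Pineq: VW1}. Choosing $g=(1-\varrho)/4$ and collecting the uncertainty corrections (replacing $\bEta,\bw_b,\bv_a$ by their estimated counterparts) via Claim~\ref{claim: mb1}, the analog of Claim~\ref{claim: mb3} giving $\|\wh{\bw}_b-\bw_b\|_2\leq(\|\btheta^{b,\nb_a}\|_1+1)\sqrt{n}\delta$, and the $\ell_1$-bound $\|\wh{\btheta}\|_1=O(\lambda^{-1})$ from Claim~\ref{claim: mb2}, all correction terms are $o(\lambda)$ once $\delta=o(n^{\min\{-(4-\xi-3\epsilon)/2,\,\epsilon-\kappa-1\}})$. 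A union bound over the at most $K=O(n^\gamma)$ indices $b\notin\nb_a$ multiplies the per-$b$ failure probability $O(\exp(-cn^\epsilon))$ by $n^\gamma$, which remains $O(\exp(-c'n^\epsilon))$.

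The main obstacle is the uniform control of $2n^{-1}\langle\br_a^{\lambda},\wh{\bw}_b\rangle$ over $b\notin\nb_a$ in the presence of the estimation error $\bR=\wh{\bH}-\bH$. Unlike the matrix-uncertainty setting of \cite{rosenbaum2010sparse}, here $\bR$ is \emph{not} independent of the regression noise, so the corrections cannot be decoupled by independence and must instead be bounded deterministically through assumption C0. A secondary point requiring care is that, since $b\notin\nb_a$, assumption A1.3(b) does not apply to $\btheta^{b,\nb_a}$; instead one uses nonsingularity A1.2 together with $|\nb_a|=O(n^\kappa)$ (A1.3(a)) to obtain $\|\btheta^{b,\nb_a}\|_1=O(n^\kappa)$, and it is precisely this growing factor that produces the $\epsilon-\kappa-1$ term in the required rate for $\delta$.
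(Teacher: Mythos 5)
Your proposal is correct and follows the same primal--dual witness architecture as the paper's proof: solve the Lasso restricted to $\nb_a$, invoke Lemma~\ref{lemma: lasso_sign} so that all coordinates in $\nb_a$ are active with the correct signs, use neighborhood stability A1.5 to reduce $|G_b(\wh{\btheta}^{a,\nb_a,\lambda,lasso})|$ for $b\notin\cl_a$ to $\varrho\lambda$ plus a stochastic cross term, and conclude via the uniqueness clause of Lemma~\ref{lemma: lasso_dual} and a union bound over $O(n^\gamma)$ indices. The one step you execute differently is the control of the cross term $2n^{-1}\langle\br_a^{\lambda},\wh{\bv}_b\rangle$ (your $\wh{\bw}_b$; note the paper reserves $\wh{\bw}_b$ for the residual on $\nb_a\backslash\{b\}$ used in Lemma~\ref{lemma: lasso_sign}). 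The paper bounds the gap between the hatted and unhatted inner products deterministically via Claim~\ref{claim: mb7}, then conditions on $\{\bH_{\cl_a},\wh{\bH}_{\cl_a}\}$ so that $\langle(\bEta_a-\mu_a\bIn)-(\bH-\bIn\bmu^T)\wh{\btheta}^{a,\nb_a,\lambda,lasso},\wt{\bv}_b\rangle$ is conditionally Gaussian with variance governed by the residual norm, finishing with a stochastic-domination and Bernstein argument; this route requires the independence of $\wt{v}_b$ from $\bEta_{\cl_a}$, justified through the Markov and contraction properties of the graph. You instead split $\wh{\bv}_b$ into components parallel and orthogonal to ${\rm span}\{\bEta_k:k\in\nb_a\}$: on the orthogonal complement the $(\bH-\bIn\bmu^T)\wh{\btheta}$ part of the residual is annihilated and the leading term $\langle\bv_a,\bw_b^{\bot}\rangle$ is handled by the elementary independence of $v_a$ from $\{\eta_k:k\neq a\}$, while the parallel part needs an analog of Lemma~\ref{lemma: lasso_ineq} with a subspace of dimension $|\nb_a|$ rather than $|\nb_a|-1$ (a cosmetic change, since $|\nb_a|=o(n\lambda^2)$ either way). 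Both routes give the same failure probability $O(\exp(-cn^\epsilon))$, and your diagnosis that the $\epsilon-\kappa-1$ exponent stems from $\|\btheta^{b,\nb_a}\|_1=O(n^\kappa)$ (via A1.2 and A1.3(a), since A1.3(b) is unavailable for $b\notin\nb_a$) matches exactly the paper's bound \eqref{ineq: DiffV} feeding into \eqref{ineq: PetaVDiff}. In short: your argument trades the Markov-property independence for an extra projection step, and is otherwise interchangeable with the paper's.
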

\begin{proof}
	Following the proof of Theorem 1, \cite{meinshausen2006high}, we have 
	\begin{align*}
		P\big(\wh{\nb}_a^{\lambda,lasso}\subseteq\nb_a\big)
		= 1 - P\big(\exists b\in\wbK\backslash\cl_a: \wh{\theta}_b^{a,\lambda,lasso} \neq 0\big), 
	\end{align*}
	and
	\begin{align*} 
		P\left(\exists b\in\wbK\backslash\cl_a:\wh{\theta}_b^{a,\lambda,lasso}\neq0\right)
		\leq P\Big(\max_{b\in\wbK\backslash\cl_a}|G_b(\wh{\btheta}^{a,\nb_a,\lambda,lasso})| \geq \lambda\Big),
	\end{align*}
	where
	\begin{align*}
		G_b\big(\wh{\btheta}^{a,\nb_a,\lambda,lasso}\big)
		= -2n^{-1}\big\langle(\wh{\bEta}_a - \mu_a\bIn)-(\wh{\bH} - \bIn\bmu^T)\wh{\btheta}^{a,\nb_a,\lambda,lasso}, \wh{\bEta}_b - \mu_b\bIn\big\rangle.
	\end{align*}
	For any $b\in\wbK\backslash\cl_a$, write
	\begin{align} \label{eq: etaNeV}
		\wh{\eta}_b - \mu_b = \sum_{k\in\nb_a}\theta_k^{b,\nb_a}(\wh{\eta}_k  - \mu_k) + \wh{v}_b
		\text{ and }
		\eta_b - \mu_b = \sum_{k\in\nb_a}\theta_k^{b,\nb_a}(\eta_k - \mu_k) + \wt{v}_b,
	\end{align}
	where $\wt{v}_b \sim N(0,\sigma_{v,b}^2)$ with $\upsilon^2\leq\sigma_{v,b}^2\leq1$ and is independent of $\{\eta_k: k\in\cl_a\}$.
	\begin{claim} \label{claim: mb7}
		Under assumption C0, for any $q>1$, with probability at least $1  -(|\nb_a|+2)\exp\Big(-\frac{q^2-\sqrt{2q^2-1}}{2}n\Big)$, 
		\begin{multline*}
			\big|\langle\wh{\bEta}_a - \mu_a\bIn - (\wh{\bH} - \bIn\bmu^T)\wh{\btheta}^{a,\nb_a,\lambda,lasso}, \wh{\bv}_b\rangle - \langle\bEta_a - \mu_a\bIn - (\bH - \bIn\bmu^T)\wh{\btheta}^{a,\nb_a,\lambda,lasso},\wt{\bv}_b\rangle\big|\\
			\leq
			n\left(\lambda^{-1}(q+\delta)+1\right)\left(1+\upsilon^{-1}|\nb_a|\right)(2q+\delta)\delta;
		\end{multline*}
		and with probability at least $1 -(|\nb_a|+1)\exp\Big(-\frac{q^2-\sqrt{2q^2-1}}{2}n\Big)$,
		\begin{multline*}
			\big|\big(\|(\wh{\bEta}_a - \mu_a\bIn) - (\wh{\bH} - \bIn\bmu^T)\wh{\btheta}^{a,\nb_a,\lambda,lasso}\|_2^2-\|\wh{\bEta}_a - \mu_a\bIn\|_2^2\big) - \\ \big(\|(\bEta_a - \mu_a\bIn)-(\bH - \bIn\bmu^T)\wh{\btheta}^{a,\nb_a,\lambda,lasso}\|_2^2-\|\bEta_a - \mu_a\bIn\|_2^2\big)\big|\\
			\leq
			n\big((\lambda^{-1}(q+\delta)+1)^2+1\big)(2q+\delta)\delta.
		\end{multline*}
	\end{claim}
	\begin{proof}
		Using triangle inequality and Cauchy's inequality, 
		\begin{align} \label{ineq: etaDiffV}
		& 
		\big|(\langle\wh{\bEta}_a - \mu_a\bIn) - (\wh{\bH} - \bIn\bmu^T)\wh{\btheta}^{a,\nb_a,\lambda,lasso}, \wh{\bv}_b\rangle \nonumber\\
		& \hspace{2in} 
		- \langle(\bEta_a - \mu_a\bIn)- (\bH - \bIn\bmu^T)\wh{\btheta}^{a,\nb_a,\lambda,lasso},\wt{\bv}_b\rangle\big| \nonumber\\
		\leq &
		\Big(\sum_{k\in\nb_a}|\wh{\theta}^{a,\nb_a,\lambda,lasso}_k|\|\wh{\bEta}_k-\bEta_k\|_2+\|\wh{\bEta}_a-\bEta_a\|_2\Big)\|\wh{\bv}_b\|_2  \nonumber\\
		& \hspace{2in} +\|\bEta_a - \mu_a\bIn) - (\bH - \bIn\bmu^T)\wh{\btheta}^{a,\nb_a,\lambda,lasso}\|_2\|\wh{\bv}_b-\wt{\bv}_b\|_2\nonumber\\
		\leq &
		\Big(\big(\sum_{k\in\nb_a}|\wh{\theta}^{a,\nb_a,\lambda,lasso}_k|+1\big)\sqrt{n}\delta+\sum_{k\in\nb_a}|\wh{\theta}^{a,\nb_a,\lambda,lasso}_k|\|\bEta_k - \mu_k\bIn\|_2 \nonumber\\
		& \hspace{1in} +\|\bEta_a - \mu_a\bIn\|_2\Big)\|\wh{\bv}_b-\wt{\bv}_b\|_2 
		+ \Big(\sum_{k\in\nb_a}|\wh{\theta}^{a,\nb_a,\lambda,lasso}_k|+1\Big)\sqrt{n}\delta\|\wt{\bv}_b\|_2.
		\end{align}
		By definition of $\wh{v}_b$ and $\wt{v}_b$ in \eqref{eq: etaNeV}, and by using the triangle inequality, 
		\begin{align*}
			\|\wh{\bv}_b-\wt{\bv}_b\|_2
			\leq \Big(\sum_{k\in\nb_a}|\theta_k^{b,\nb_a}|+1\Big)\sqrt{n}\delta.
		\end{align*}
		Moreover, by the definition of partial correlation and assumption A1.4,
		\begin{align*}
			1 \geq |\pi_{bk}^{b,\nb_a}| = |\theta_k^{b,\nb_a}|\sqrt{\frac{\var(\eta_b|\bEta_{\nb_a})}{\var(\eta_k|\bEta_{\{b\}\cup\nb_a\backslash\{k\}})}} \geq v|\theta_k^{b,\nb_a}|, 
		\end{align*}
		and thus 
		\begin{align} \label{ineq: DiffV}
			\|\wh{\bv}_b-\wt{\bv}_b\|_2
			\leq
			(\upsilon^{-1}|\nb_a|+1)\sqrt{n}\delta.
		\end{align}
		Using \eqref{ineq: etaDiffV}, \eqref{ineq: DiffV}, Claim~\ref{claim: mb2} and the property of chi-square distribution, with probability $1  -(|\nb_a|+2)\exp\Big(-\frac{q^2-\sqrt{2q^2-1}}{2}n\Big)$, 
		\begin{multline*}
			\big|\langle(\wh{\bEta}_a - \mu_a\bIn)-(\wh{\bH} - \bIn\bmu^T)\wh{\btheta}^{a,\nb_a,\lambda,lasso}, \wh{\bv}_b\rangle - \langle(\bEta_a - \mu_a\bIn) - (\bH - \bIn\bmu^T)\wh{\btheta}^{a,\nb_a,\lambda,lasso},\wt{\bv}_b\rangle\big| \\
			\leq
			n\left((q+\delta)\lambda^{-1}+1\right)\left(1+\upsilon^{-1}|\nb_a|\right)(2q+\delta)\delta.
		\end{multline*}
		The second part follows similarly. 
	\end{proof}
	\noindent
	Lemma~\ref{lemma: lasso_dual}, \ref{lemma: lasso_sign} and assumption A1.5 imply that, for any  $\delta=o(n^{-(4-\xi-3\epsilon)/2})$,
	there exists $c>0$ so that for all $a\in\wbK$ and $b\in \wbK\backslash\cl_a$, 
	\begin{multline} \label{eq: Peq_G}
		P\Big(|G_b(\wh{\btheta}^{a,\nb_a,\lambda,lasso})|
		\leq\varrho\lambda + |2n^{-1}\langle(\wh{\bEta}_a - \mu_a\bIn)-(\wh{\bH} - \bIn\bmu^T)\wh{\btheta}^{a,\nb_a,\lambda,lasso},\wh{\bv}_b\rangle|\Big) \\
		=
		1-O(\exp(-cn^\epsilon)) 
		\quad\text{as }n\rightarrow\infty.
	\end{multline}
	Now we need to estimate $|2n^{-1}\langle(\wh{\bEta}_a - \mu_a\bIn)-(\wh{\bH} - \bIn\bmu^T)\wh{\btheta}^{a,\nb_a,\lambda,lasso},\wh{\bv}_b\rangle|$. 
	Note that by Claim~\ref{claim: mb7}, for any $\delta=O(1)$, there exists some constant $B>0$ and $c>0$ so that for $n\rightarrow\infty$, with probability $1-O(\exp(-cn^\epsilon))$,
	\begin{multline} \label{ineq: PetaVDiff}
		\big|\langle(\wh{\bEta}_a - \mu_a\bIn)-(\wh{\bH} - \bIn\bmu^T)\wh{\btheta}^{a,\nb_a,\lambda,lasso},\wh{\bv}_b\rangle - \langle(\bEta_a - \mu_a\bIn) - (\bH - \bIn\bmu^T)\wh{\btheta}^{a,\nb_a,\lambda,lasso},\wt{\bv}_b\rangle\big| \\ 
		\leq 
		Bn^{3/2-\epsilon/2+\kappa}\delta.
	\end{multline}
	We already know that $\wt{v}_b{\perp\!\!\!\perp}\bEta_{\cl_a}$\footnote{This follows from the Markov properties of the conditional independence graph and the contraction property of conditional independence.} and $\wh{\bEta}_{\cl_a}{\perp\!\!\!\perp}\bEta_{\wbK\backslash\cl_a}|\bEta_{\cl_a}$, we have $\wt{v}_b{\perp\!\!\!\perp}\wh{\bEta}_{\cl_a}|\bEta_{\cl_a}$ since $\wt{v}_b=\eta_b-\sum_{k\in\nb_a}\theta_k^{b,\nb_a}\eta_k$. Thus, $\wt{v}_b{\perp\!\!\!\perp}\{\bEta_{\cl_a}, \wh{\bEta}_{\cl_a}\}$. Here ${\perp\!\!\!\perp}$ denotes independence. Conditional on $\{\bH_{\cl_a}, \wh{\bH}_{\cl_a}\}=\{\bEta_k, \wh{\bEta}_k: k\in\cl_a\}$, the random variable
	\begin{align*} 
		\langle(\bEta_a - \mu_a\bIn) - (\bH - \bIn\bmu^T)\wh{\btheta}^{a,\nb_a,\lambda,lasso}, \wt{\bv}_b\rangle
	\end{align*}
	is normally distributed with mean zero and variance 
	$$
	\sigma_{v,b}^2\|(\bEta_a - \mu_a\bIn) - (\bH - \bIn\bmu^T)\wh{\btheta}^{a,\nb_a,\lambda,lasso}\|_2^2. 
	$$
	By definition of $\wh{\btheta}^{a,\nb_a,\lambda,lasso}$,
	\begin{align*}
		\|(\wh{\bEta}_a - \mu_a\bIn) - (\wh{\bH} - \bIn\bmu^T)\wh{\btheta}^{a,\nb_a,\lambda,lasso}\|_2
		\leq
		\|\wh{\bEta}_a - \mu_a\bIn\|_2; 
	\end{align*}
	and by Claim~\ref{claim: mb7}, for $\delta=O(1)$, there exists constant $c, B>0$ so that, with probability $1-O(\exp(-cn^\epsilon))$, as $n \rightarrow \infty$, as $n \rightarrow \infty$, 
	\begin{multline*}
		\big|\big(\|(\wh{\bEta}_a - \mu_a\bIn) - (\wh{\bH} - \bIn\bmu^T)\wh{\btheta}^{a,\nb_a,\lambda,lasso}\|_2 - \|\wh{\bEta}_a - \mu_a\bIn\|_2^2\big) \\ 
		-(\|(\bEta_a - \mu_a\bIn) - (\bH - \bIn\bmu^T)\wh{\btheta}^{a,\nb_a,\lambda,lasso}\|_2^2-\|\bEta_a - \mu_a\bIn\|_2^2)\big| 
		\leq 
		Bn^{2-\epsilon}\delta.
	\end{multline*}
	Futhermore, for $\delta=O(n^{\min\{0,2\tau+\epsilon-2\}})$, there exists $c_t>0$, such that for  $t_a = c_tn^{\tau}$,
	\begin{multline*}
		P\left(\|(\bEta_a - \mu_a\bIn) - (\bH - \bIn\bmu^T)\wh{\btheta}^{a,\nb_a,\lambda,lasso}\|_2^2 \leq \|\bEta_a - \mu_a\bIn\|_2^2+ t_a^2\right) \\
		= 1-O(\exp(-cn^\epsilon)) \quad \text{ as }n\rightarrow\infty, 
	\end{multline*}
	thus, $|2n^{-1}\langle(\bEta_a - \mu_a\bIn) - (\bH - \bIn\bmu^T)\wh{\btheta}^{a,\nb_a,\lambda,lasso},\wt{\bv}_b\rangle|$
	is stochastically smaller than $|2n^{-1}(\langle\bEta_a - \mu_a\bIn, \wt{\bv}_b\rangle+t_az_b)|$ with probability $1-O(\exp(-cn^\epsilon))$, as $n \rightarrow \infty$, where $z_b\sim N(0,\sigma_{v,b}^2)$ and is independent of other random variables. Since $\wt{v}_b$ and $\eta_a$ are independent, $\E(\eta_a\wt{v}_b)=0$. Using the Gaussianity and Bernstein's inequality, 
	\begin{align*}
		P\left(|2n^{-1}(\langle\bEta_a - \mu_a\bIn, \wt{\bv}_b\rangle+t_az_b)|\geq(1-\varrho)\lambda/2\right)
		= O\left(\exp\{-cn^{\min\{\epsilon,1+\epsilon-2\tau\}}\}\right) 
		\text{ as } n\rightarrow \infty. 
	\end{align*}
	and thus for $\delta=O(n^{\min\{0,2\tau+\epsilon-2\}})$, as $n\rightarrow\infty$,
	\begin{multline} \label{ineq: PetaV}
		P\big(|2n^{-1}\langle(\bEta_a - \mu_a\bIn) - (\bH - \bIn\bmu^T)\wh{\btheta}^{a,\nb_a,\lambda,lasso},\wt{\bv}_b\rangle|\geq(1-\varrho)\lambda/2\big) \\
		=
		O\left(\exp\{-cn^{\min\{\epsilon,1+\epsilon-2\tau\}}\}\right). 
	\end{multline}
	By \eqref{eq: Peq_G}, \eqref{ineq: PetaVDiff} and \eqref{ineq: PetaV}, for $\delta=o(n^{-(4-\xi-3\epsilon)/2})$ and $\delta=O(n^{2\tau+\epsilon-2})$, there exists $c, B>0$, with probability $1-O\left(\exp\{-cn^{\min\{\epsilon,1+\epsilon-2\tau\}}\}\right)$, as $n\rightarrow\infty$, 
	\begin{align*}
		|G_b(\wh{\btheta}^{a,\nb_a,\lambda,lasso})|<(1+\varrho)\lambda/2+Bn^{1/2-\epsilon/2+\kappa}\delta, 
	\end{align*}
	and we obtain that for $\delta=o(n^{\min\{-(4-\xi-3\epsilon)/2, \epsilon-\kappa-1\}})$, 
	\begin{align*}
		P\Big(\max_{b\in\wbK\backslash\{a\}}|G_b(\wh{\btheta}^{a,\nb_a,\lambda,lasso})|\geq\lambda\Big) = O\big(\exp(-cn^\epsilon)\big)\quad\text{as }n\rightarrow\infty.\\[-30pt]
	\end{align*}
\end{proof}

\begin{theorem} \label{thm: lasso_type2}
	Let assumption A1 hold and assume $\bmu$ to be known. Let the penalty parameter satisfy $\lambda_n\sim dn^{-(1-\epsilon)/2}$ with some $d>0$ and $\kappa<\epsilon<\xi$. If, in addition, C0 holds with $\delta = o(n^{\min\{-(4-\xi-3\epsilon)/2, \epsilon-\kappa-1\}})$, for all $a\in\wbK$, 
	\begin{align*}
		P(\nb_a\subseteq\wh{\nb}_a^{\lambda,lasso})=1-O(\exp(-cn^\epsilon))
		\quad
		\text{as }n\rightarrow\infty.
	\end{align*}
\end{theorem}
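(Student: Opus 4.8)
The plan is to obtain this no--false--negative (containment) statement as a direct consequence of two results already in hand: the sign--consistency of the \emph{restricted} Lasso estimate from Lemma~\ref{lemma: lasso_sign}, and the no--false--inclusion bound of Theorem~\ref{thm: lasso_type1}. The event $\{\nb_a\subseteq\wh{\nb}_a^{\lambda,lasso}\}$ fails exactly when some true neighbor $b\in\nb_a$ is assigned a zero coefficient by the full Lasso solution $\wh{\btheta}^{a,\lambda,lasso}$. I would therefore work on the intersection of the two events
\[
A=\big\{\wh{\nb}_a^{\lambda,lasso}\subseteq\nb_a\big\}
\qquad\text{and}\qquad
B=\big\{\sign(\wh{\theta}_b^{a,\nb_a,\lambda,lasso})=\sign(\theta_b^a),\ \forall b\in\nb_a\big\},
\]
and show that on $A\cap B$ every true neighbor is selected. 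Note that the $\delta$--condition assumed here is precisely the one required by Theorem~\ref{thm: lasso_type1}, and it is stronger than the one needed for Lemma~\ref{lemma: lasso_sign}, so both results may be invoked.

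First I would argue that on $A$ the full Lasso estimate and the restricted estimate $\wh{\btheta}^{a,\nb_a,\lambda,lasso}$ of \eqref{def: thetaNeLasso} select the same coordinates. On $A$ the full solution satisfies $\wh{\theta}_k^{a,\lambda,lasso}=0$ for all $k\notin\cl_a$, so it is feasible for the restricted problem; since the restricted feasible set is contained in the full one and the full minimizer happens to lie in it, it is also a restricted minimizer. In the proof of Theorem~\ref{thm: lasso_type1} this is obtained by showing $\max_{b\notin\cl_a}|G_b(\wh{\btheta}^{a,\nb_a,\lambda,lasso})|<\lambda$, which by the dual characterization of Lemma~\ref{lemma: lasso_dual} forces these coordinates to vanish in \emph{every} solution, so the restricted estimate, padded by zeros, is itself a solution of the full problem.

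Next I would invoke Lemma~\ref{lemma: lasso_sign}: on $B$ the restricted estimate satisfies $\sign(\wh{\theta}_b^{a,\nb_a,\lambda,lasso})=\sign(\theta_b^a)$ for every $b\in\nb_a$, and assumption A1.4 guarantees $|\pi_{ab}|>0$, hence $\theta_b^a\neq0$ and $\sign(\theta_b^a)\neq0$, so that $\wh{\theta}_b^{a,\nb_a,\lambda,lasso}\neq0$ for each $b\in\nb_a$. Combining this with the coincidence of the two estimates on $A$ yields $\wh{\theta}_b^{a,\lambda,lasso}\neq0$ for all $b\in\nb_a$, i.e. $\nb_a\subseteq\wh{\nb}_a^{\lambda,lasso}$. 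A union bound over $A^c$ and $B^c$, each of probability $O(\exp(-cn^\epsilon))$ by Theorem~\ref{thm: lasso_type1} and Lemma~\ref{lemma: lasso_sign} respectively, then gives $P(\nb_a\subseteq\wh{\nb}_a^{\lambda,lasso})=1-O(\exp(-cn^\epsilon))$, as claimed.

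I expect the main obstacle to be the rigorous identification of the full and restricted estimates in the step above, since the Lasso solution need not be unique in the high--dimensional regime $n<K$. A coordinate $b\in\nb_a$ could be zero in one full solution and nonzero in another only at the boundary case $|G_b|=\lambda$; ruling this out requires combining the \emph{strict} sign statement of Lemma~\ref{lemma: lasso_sign} with the second part of Lemma~\ref{lemma: lasso_dual} to pin down the selected support. Once this bookkeeping is carried out, the probabilistic content is inherited entirely from the two prior results, and in particular the matrix uncertainty $\bR$ needs no fresh treatment here, as its effect has already been absorbed into the analysis underlying Lemma~\ref{lemma: lasso_sign} and Theorem~\ref{thm: lasso_type1}.
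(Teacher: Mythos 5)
Your proposal is correct and follows essentially the same route as the paper, whose proof of this theorem is precisely to combine Theorem~\ref{thm: lasso_type1} with Lemma~\ref{lemma: lasso_sign} in the manner of the proof of Theorem 2 in \cite{meinshausen2006high}. Your write-up in fact supplies more detail than the paper does, in particular on identifying the restricted and unrestricted solutions via Lemma~\ref{lemma: lasso_dual}, which is the same bookkeeping the cited argument relies on.
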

\begin{proof}
	Using Theorem~\ref{thm: lasso_type1} and Lemma~\ref{lemma: lasso_sign}, the proof is similar to \cite{meinshausen2006high}, proof of Theorem 2. 
\end{proof}

\begin{corollary} \label{corollary: mb0_asy}
	Let assumption A1 hold and assume $\bmu$ to be known. Let the penalty parameter satisfy $\lambda_n\sim dn^{-(1-\epsilon)/2}$ with some $d>0$ and $\kappa<\epsilon<\xi$. If, in addition, C0 holds with  $\delta = o(n^{\min\{-(4-\xi-3\epsilon)/2, \epsilon-\kappa-1\}})$, then there exists $ c>0$ so that 
	\begin{align*}
		P(\wh{E}^{\lambda,lasso}=E)=1- O(\exp(-cn^\epsilon))
		\quad
		\text{as }n\rightarrow\infty.
	\end{align*}
\end{corollary}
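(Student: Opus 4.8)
The plan is to assemble the two one-sided neighborhood guarantees, Theorem~\ref{thm: lasso_type1} (no over-selection) and Theorem~\ref{thm: lasso_type2} (no under-selection), into a single statement about exact recovery of the whole edge set, paying for the passage from a fixed node to all $K(n)$ nodes with a union bound that the polynomial growth of $K(n)$ cannot defeat. First I would record the deterministic event-inclusion that makes the combination work. Since $\bD=\bSigma^{-1}$ is symmetric, $d_{ab}=d_{ba}$, so $b\in\nb_a\iff a\in\nb_b\iff (a,b)\in E$; that is, the true neighborhoods are mutually consistent. Consequently, on the event
\[
	\mathcal{E} := \bigcap_{a\in\wbK}\big\{\wh{\nb}_a^{\lambda,lasso}=\nb_a\big\},
\]
for every pair $a\neq b$ we have $b\in\wh{\nb}_a^{\lambda,lasso}\iff (a,b)\in E$ and $a\in\wh{\nb}_b^{\lambda,lasso}\iff (a,b)\in E$. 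Hence on $\mathcal{E}$ both the ``$\wedge$'' and the ``$\vee$'' aggregation rules produce exactly $E$, so that $\mathcal{E}\subseteq\{\wh{E}^{\lambda,lasso}=E\}$ irrespective of which rule the symbol $\wh{E}^{\lambda,lasso}$ refers to.

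Next I would bound the complement of $\mathcal{E}$. Writing $\{\wh{\nb}_a^{\lambda,lasso}=\nb_a\}=\{\wh{\nb}_a^{\lambda,lasso}\subseteq\nb_a\}\cap\{\nb_a\subseteq\wh{\nb}_a^{\lambda,lasso}\}$ and applying a union bound over $a\in\wbK$ together with the inclusion just established gives
\[
	P\big(\wh{E}^{\lambda,lasso}\neq E\big)
	\leq
	\sum_{a\in\wbK}\Big[P\big(\wh{\nb}_a^{\lambda,lasso}\not\subseteq\nb_a\big)+P\big(\nb_a\not\subseteq\wh{\nb}_a^{\lambda,lasso}\big)\Big].
\]
By Theorem~\ref{thm: lasso_type1} and Theorem~\ref{thm: lasso_type2}, each of the $2K(n)$ summands is $O(\exp(-cn^\epsilon))$ with a single constant $c>0$ uniform in $a$ (this uniformity is exactly what the two theorems supply, and the hypotheses on $\lambda_n$, on $\epsilon$, and on $\delta$ in assumption C0 are precisely those demanded by them, so no new conditions are introduced here). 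Since $K(n)=O(n^\gamma)$ by A1.1, the right-hand side is $O\big(n^\gamma\exp(-cn^\epsilon)\big)$.

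Finally I would absorb the polynomial factor. Because $\epsilon>0$, we have $\gamma\log n=o(n^\epsilon)$, whence
\[
	n^\gamma\exp(-cn^\epsilon)=\exp\big(\gamma\log n-cn^\epsilon\big)=O\big(\exp(-c'n^\epsilon)\big)
\]
for any $0<c'<c$; relabeling $c'\to c$ (consistent with the convention that ``$c$'' may change from line to line) yields the claimed bound $P(\wh{E}^{\lambda,lasso}=E)=1-O(\exp(-cn^\epsilon))$.

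I would expect no serious obstacle: the corollary is in essence a bookkeeping step that glues together the substantive Theorems~\ref{thm: lasso_type1} and~\ref{thm: lasso_type2}. The only points demanding care are that the per-node failure probabilities hold with a constant $c$ that does \emph{not} depend on $a$, so that summing over the $K(n)$ nodes does not secretly degrade the exponential rate, and that the polynomial cost $n^\gamma$ of the union bound is genuinely dominated by $\exp(-cn^\epsilon)$, which needs only $\epsilon>0$. Both are immediate given the uniform statements already proved.
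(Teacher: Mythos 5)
Your proposal is correct and follows essentially the same route as the paper: the paper's proof likewise observes that $\{\wh{E}^{\lambda,lasso}\neq E\}$ is contained in the union over $a\in\wbK$ of $\{\wh{\nb}_a^{\lambda,lasso}\neq\nb_a\}$, applies Theorems~\ref{thm: lasso_type1} and~\ref{thm: lasso_type2} with Bonferroni's inequality, and invokes A1.1 to absorb the factor $K(n)=O(n^\gamma)$ into $\exp(-cn^\epsilon)$. Your write-up simply makes explicit the symmetry argument showing that both aggregation rules recover $E$ on the good event.
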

\begin{proof}
	Note that $\wh{E}^{\lambda, lasso}\neq E$ if and only if there exists $a\in\wbK$ so that $\wh{\nb}_a^{\lambda, lasso}\neq \nb_a$. The result now follows from Theorem~\ref{thm: lasso_type1} and Theorem~\ref{thm: lasso_type2} by using Bonferroni's inequality and assumption A1.1. 
\end{proof}

\subsection{Proof of Theorem~\ref{corollary: net_ds0_asy}}
We prove a series of results, which will then imply Theorem~\ref{corollary: net_ds0_asy}. The asseration of Theorem~\ref{corollary: net_ds0_asy} follows from Corollary~\ref{thm: ds0_asy} together with Lemma~\ref{lemma: net_etaDiff}. First we introduce some notation. Let 
\begin{align*}
	& s=\max_{a\in\wbK}|\nb_a| \\
	& \bPsi=\frac{1}{n}(\bH - \bIn\bmu^T)^T(\bH - \bIn\bmu^T) \\
	& \bPsi^a=\frac{1}{n}(\bH_{-a} - \bIn\bmu_{-a}^T)^T(\bH_{-a} - \bIn\bmu_{-a}^T) \quad\text{for each }a\in\wbK.
\end{align*} 
We also define, for each $a\in\wbK$, $1\leq s \leq K$ and $1\leq q\leq \infty$
\begin{align} \label{def: kappa}
	\kappa^a_q(s)=\min_{J:|J|\leq s}\Big(\min_{\bDelta\in\mathbb{R}^p: \|\bDelta_{J^c}\|_1\leq\|\bDelta_{J}\|_1\atop \|\bDelta\|_q=1 }\big|\bPsi^a\bDelta\big|_\infty\Big). 
\end{align}
\begin{claim} \label{claim: ds2}
	For $n$ large enough, $\lvert\bPsi-\bSigma\rvert_\infty<\frac{1}{2\alpha s}$ with probability no less than $1-K^2\exp\left(-\frac{n}{150\alpha^2s^2\sigma^4}\right)$.
\end{claim}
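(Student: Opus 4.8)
The plan is to establish an entrywise concentration bound for the sample covariance $\bPsi$ around its population mean $\bSigma$, and then to finish by a union bound over the at most $K^2$ entries. Writing the $(i,j)$-entry as
\[
\Psi_{ij}-\sigma_{ij}=\frac{1}{n}\sum_{t=1}^n\big[(\eta_i^{(t)}-\mu_i)(\eta_j^{(t)}-\mu_j)-\sigma_{ij}\big],
\]
it is an average of $n$ i.i.d.\ centered random variables, each a product of the jointly Gaussian pair $(\eta_i^{(t)}-\mu_i,\eta_j^{(t)}-\mu_j)$. Since $\sigma_{kk}\le\sigma^2$ for all $k$, I would first normalize, setting $U_t=(\eta_i^{(t)}-\mu_i)/\sqrt{\sigma_{ii}}$ and $V_t=(\eta_j^{(t)}-\mu_j)/\sqrt{\sigma_{jj}}$, so that $(U_t,V_t)$ is standard bivariate normal with correlation $\rho=\sigma_{ij}/\sqrt{\sigma_{ii}\sigma_{jj}}$ and $\Psi_{ij}-\sigma_{ij}=\sqrt{\sigma_{ii}\sigma_{jj}}\,\big(n^{-1}\sum_t U_tV_t-\rho\big)$, where $\sqrt{\sigma_{ii}\sigma_{jj}}\le\sigma^2$.

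Next I would eliminate the product structure by polarization, $U_tV_t=\tfrac14[(U_t+V_t)^2-(U_t-V_t)^2]$, noting that $U_t+V_t\sim N(0,2(1+\rho))$ and $U_t-V_t\sim N(0,2(1-\rho))$ with both variance proxies bounded by $4$. Hence $n^{-1}\sum_t U_tV_t-\rho$ is a difference of two centered, scaled $\chi^2$-averages, and the very same $\chi^2$ tail bound of \cite{Laurent2000adaptive} used earlier applies to each. This yields, for every entry, a sub-exponential bound of the form
\[
P\big(|\Psi_{ij}-\sigma_{ij}|>\epsilon\big)\le C\exp\!\Big(-\frac{c\,n\epsilon^2}{\sigma^4}\Big),
\]
valid for $\epsilon$ in a bounded range, namely the small-deviation regime in which the quadratic term of the $\chi^2$/Bernstein bound dominates the linear one; the factor $\sigma^4$ arises from $\sigma_{ii}\sigma_{jj}\le\sigma^4$ together with the $O(1)$ factors $(1\pm\rho)$. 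The diagonal entries $\Psi_{ii}-\sigma_{ii}=\sigma_{ii}(n^{-1}\sum_t U_t^2-1)$ reduce directly to a single centered $\chi^2$-average and obey the same estimate.

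Finally I would take $\epsilon=\tfrac{1}{2\alpha s}$ and union bound over the at most $K^2$ entries, obtaining
\[
P\big(|\bPsi-\bSigma|_\infty>\tfrac{1}{2\alpha s}\big)\le K^2\,C\exp\!\Big(-\frac{c\,n}{4\alpha^2 s^2\sigma^4}\Big).
\]
For $n$ large enough, two things happen simultaneously: the target deviation $\tfrac{1}{2\alpha s}$ lies inside the admissible small-deviation range, and the constant prefactor $C$ is absorbed into the exponent at the price of enlarging the denominator constant, producing the stated $1-K^2\exp\!\big(-n/(150\alpha^2 s^2\sigma^4)\big)$.

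The main obstacle I anticipate is the bookkeeping of the explicit constant $150$ and the $\sigma^4$ scaling through the polarization step and the Laurent--Massart inequalities, together with the verification that for the chosen $\epsilon=\tfrac{1}{2\alpha s}$ one is genuinely in the Gaussian-tail regime, so that the exponent is quadratic rather than linear in $\epsilon$. It is precisely this regime check that forces the ``for $n$ large enough'' qualifier and permits the prefactor to be swallowed into the exponent with a cruder constant.
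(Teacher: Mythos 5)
Your proposal is correct, but it reaches the entrywise concentration bound by a different decomposition than the paper. The paper handles an off-diagonal entry by the Gaussian regression (projection) identity $\bEta_b - \mu_b\bIn=\sigma_{ba}\sigma_{aa}^{-1}(\bEta_a - \mu_a\bIn)+\bv_{ba}$ with $\bv_{ba}$ independent of $\bEta_a$, so that $\Psi_{ab}-\sigma_{ab}$ splits into a scaled centered $\chi^2_n$-average plus a cross term $n^{-1}(\bEta_a-\mu_a\bIn)^T\bv_{ba}$; the first piece is controlled by a $\chi^2$ tail bound and the second, being an average of products of independent Gaussians, by Bernstein's inequality, before the same union bound over $K^2$ pairs. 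You instead polarize, writing $U_tV_t=\tfrac14[(U_t+V_t)^2-(U_t-V_t)^2]$, which eliminates the cross term entirely and reduces every entry (diagonal and off-diagonal alike) to a difference of two centered $\chi^2$-averages handled by Laurent--Massart alone. Your route is arguably more uniform, since it needs only one concentration tool and treats diagonal and off-diagonal entries identically, while the paper's regression split keeps the dependence on $\sigma_{ba}$ explicit, which is what lets it allocate the threshold as $\tfrac{1}{3\alpha s}+\tfrac{1}{6\alpha s}$ and track the constant $150$ more directly. In both arguments the final steps are the same: verify that $\epsilon=\tfrac{1}{2\alpha s}$ lies in the quadratic (small-deviation) regime, union bound over at most $K^2$ entries, and use ``$n$ large enough'' (together with $n/(\alpha^2 s^2\sigma^4)\rightarrow\infty$, which holds since $s=O(n^{\kappa})$ with $\kappa<1/2$) to absorb the prefactors into the exponent.
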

\begin{proof}
	Denote $\bSigma=(\sigma_{ab})_{K\times K}$. 
	For $b\neq a$, we have $\bEta_b - \mu_b\bIn=\sigma_{ba}\sigma_{aa}^{-1}(\bEta_a - \mu_a\bIn)+\bv_{ba}$, where each element of $\bv_{ba}$ is a zero mean normal with variance $\sigma_{b|a}\leq\sigma_{bb}$; and 
	\begin{align*}
	& \; P\Big(\Big|\frac{1}{n}(\bEta_a - \mu_a\bIn)^T(\bEta_b - \mu_b\bIn)-\sigma_{ab}\Big|>\frac{1}{2\alpha s}\Big) \\
	\leq &\;
	P\left(\sigma_{ba}\Big|\frac{(\bEta_a - \mu_a\bIn)^T(\bEta_a - \mu_a\bIn)}{n\sigma_{aa}}-1\Big|>\frac{1}{3\alpha s}\right) + P\left(\Big|\frac{(\bEta_a - \mu_a\bIn)^T\bv_{ba}}{n}\Big|>\frac{1}{6\alpha s}\right)
	\end{align*}
	Using a tail bound for the chi-squared distribution from \cite{Johnstone2001chisquare}, Bernstein's inequality and Bonferroni's inequality, we have for $n$ large enough, 
	\begin{align*}
		P\Big(\lvert\bPsi-\bSigma\rvert_\infty>\frac{1}{2\alpha s}\Big)
		\leq
		K^2\exp\left(-\frac{n}{150\alpha^2s^2\sigma^4}\right).
	\end{align*}
\end{proof}

\begin{claim} \label{claim: ds3}
	Under assumption C0, for any $q >0$, we have with probability no less than $1-2K^2\exp\big(-\frac{nq^2}{4\sigma^4+2\sigma^2q}\big) - 2K\exp\left(-2^{-1}n\right)$,
	\begin{align*}
		\max_{a\in\wbK}\big|n^{-1}(\wh{\bH}_{-a} - \bIn\bmu_{-a}^T)^T\bm{\xi}_a\big|_\infty 
		\leq 
		q+4\sigma\delta+\delta^2.
	\end{align*}
\end{claim}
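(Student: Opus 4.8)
The plan is to isolate the single genuinely random term and to absorb everything else into the two error contributions $4\sigma\delta$ and $\delta^2$ using assumption C0. Writing $\wh{\bH}_{-a} - \bIn\bmu_{-a}^T = (\bH_{-a} - \bIn\bmu_{-a}^T) + \bR_{-a}$ and $\bm{\xi}_a = \bv_a + (\wh{\bEta}_a - \bEta_a)$, I would first expand, for each $a\in\wbK$,
\begin{multline*}
	(\wh{\bH}_{-a} - \bIn\bmu_{-a}^T)^T\bm{\xi}_a
	= (\bH_{-a} - \bIn\bmu_{-a}^T)^T\bv_a
	+ (\bH_{-a} - \bIn\bmu_{-a}^T)^T(\wh{\bEta}_a - \bEta_a) \\
	+ \bR_{-a}^T\bv_a
	+ \bR_{-a}^T(\wh{\bEta}_a - \bEta_a),
\end{multline*}
bound the $|\cdot|_\infty$-norm of each of the four terms coordinatewise (over $b\in\wbK\backslash\{a\}$), and recombine by the triangle inequality.

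The leading term has $b$-th coordinate $n^{-1}(\bEta_b - \mu_b\bIn)^T\bv_a = n^{-1}\sum_{t=1}^n(\eta_b^{(t)}-\mu_b)v_a^{(t)}$. Since $\bv_a$ is the regression residual, it is independent of $\{\eta_k:k\neq a\}$, so each summand is a product of two \emph{independent} mean-zero Gaussians with variances $\sigma_{bb}\le\sigma^2$ and $\sigma_{-a}^2\le\sigma^2$ (a conditional variance never exceeding its marginal). Such a product is sub-exponential with moment generating function $(1-\lambda^2\sigma_{bb}\sigma_{-a}^2)^{-1/2}$, and a Chernoff bound with the attendant Bernstein calibration ($v^2=2\sigma^4$, scale $\sigma^2$) gives the one-coordinate tail $P\big(|n^{-1}(\bEta_b - \mu_b\bIn)^T\bv_a|\ge q\big)\le 2\exp\big(-nq^2/(4\sigma^4+2\sigma^2q)\big)$. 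A union bound over the fewer than $K^2$ pairs $(a,b)$ produces the factor $2K^2$ in front of this exponential.

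The remaining three terms I would bound on the event of C0. By Cauchy--Schwarz and Fact~\ref{claim: mb1}, the $b$-th coordinates satisfy $n^{-1}|(\bEta_b - \mu_b\bIn)^T(\wh{\bEta}_a - \bEta_a)|\le n^{-1/2}\|\bEta_b - \mu_b\bIn\|_2\,\delta$ and $n^{-1}|\bR_{-a}^T\bv_a|_b\le \delta\,n^{-1/2}\|\bv_a\|_2$, while $n^{-1}|\bR_{-a}^T(\wh{\bEta}_a - \bEta_a)|_b\le\delta^2$ is fully deterministic under C0. Using $\|\bEta_b - \mu_b\bIn\|_2^2\sim\sigma_{bb}\chi_n^2$ and $\|\bv_a\|_2^2\sim\sigma_{-a}^2\chi_n^2$ with the $\chi^2$-tail bound of \cite{Laurent2000adaptive}, the events $\{n^{-1/2}\|\bEta_b - \mu_b\bIn\|_2\le 2\sigma\}$ and $\{n^{-1/2}\|\bv_a\|_2\le 2\sigma\}$ each hold with probability at least $1-\exp(-2^{-1}n)$; there are at most $2K$ of them (one per $b$ and one per $a$), contributing the $2K\exp(-2^{-1}n)$ term. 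On their intersection the second and third terms are each at most $2\sigma\delta$, so the three remainders sum to at most $4\sigma\delta+\delta^2$, and adding the stochastic bound $q$ yields $q+4\sigma\delta+\delta^2$.

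I expect the only delicate point to be the leading term: because a product of Gaussians is sub-exponential rather than sub-Gaussian, a Hoeffding-type inequality is unavailable, and the exact constants $4\sigma^4$ and $2\sigma^2$ in the denominator must be read off from the moment generating function $(1-\lambda^2\sigma_{bb}\sigma_{-a}^2)^{-1/2}$ via a carefully tuned Bernstein inequality. The three remainder terms are routine once C0 and Fact~\ref{claim: mb1} are in place.
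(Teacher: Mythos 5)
Your proof is correct and follows essentially the same route as the paper: the same decomposition of $(\wh{\bH}_{-a} - \bIn\bmu_{-a}^T)^T\bm{\xi}_a$ into a Gaussian-product leading term plus three remainders controlled by C0, the same Bernstein bound with union over the $K^2$ pairs, and the same $\chi^2$-tail control of $n^{-1/2}\|\bEta_b-\mu_b\bIn\|_2$ and $n^{-1/2}\|\bv_a\|_2$ contributing the $2K\exp(-2^{-1}n)$ term. You in fact spell out the sub-exponential calibration behind the constants $4\sigma^4$ and $2\sigma^2$, which the paper leaves implicit.
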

\begin{proof}
	By straightforward calculation, 
	\begin{align*}
		&\max_{a\in\wbK}\big|n^{-1}(\wh{\bH}_{-a} -  \bIn\bmu_{-a}^T)^T\bm{\xi}_a\big|_\infty \\ 
		\leq 
		&
		\max_{a\in\wbK}\big|n^{-1}(\wh{\bH}_{-a} - \bIn\bmu_{-a}^T)^T\bv_a\big|_\infty + \max_{a\in\wbK}\big|n^{-1}(\wh{\bH}_{-a} - \bIn\bmu_{-a}^T)^T(\wh{\bEta}_a-\bEta_a)\big|_\infty \\
		\leq
		&
		\max_{a\in\wbK}\max_{ b\in\wbK\backslash\{a\}}n^{-1}\left|(\bEta_{b} - \mu_b\bIn)^T\bv_a\right| + n^{-1/2}\big(\max_{a\in\wbK}\|\bEta_a - \mu_a\bIn\|_2 + \max_{a\in\wbK}\|\bv_a\|_2\big)\delta+\delta^2.
	\end{align*}
	Note that $n^{-1}|(\bEta - \mu_a\bIn)^T_{b}\bv_a|$ can be estimated by Bernstein's inequality, and both $\|\bEta_a - \mu_a\bIn\|_2^2$ and $\sigma_{aa}^{-1}\|\bv_a\|_2^2$ are chi-square distributed. Thus, together with Bonferroni's inequality, gives 
	\begin{align*}
		P\Big(\max_{a\in\wbK}\max_{ b\in\wbK\backslash\{a\}}n^{-1}\left|(\bEta_{b} - \mu_b\bIn)^T\bv_a\right|\geq q\Big)
		\leq 
		2K^2\exp\left(-\frac{nq^2}{4\sigma^4+2\sigma^2 q}\right)
	\end{align*}
	and
	\begin{align*}
		P\Big(n^{-1/2}\big(\max_{a\in\wbK}\|\bEta_a-\mu_a\bIn\|_2+\max_{a\in\wbK}\|\bv_a\|_2\big) > 4\sigma\Big)
		\leq 
		2K\exp\left(-2^{-1}n\right).
	\end{align*}
\end{proof}
\begin{claim} \label{claim: ds4}
	For any $\alpha > \|\bSigma^{-1}\|_\infty$, there exists $n_\alpha=n(\alpha)$ so that, for all $n\geq n_\alpha$, 
	\begin{align*}
		P\Big(\max_{a\in\wbK}\kappa^a_\infty(s)\geq \|\bSigma^{-1}\|_\infty^{-1}-\alpha^{-1}\Big)
		\leq 
		1 - K^2\exp\left(-\frac{n}{150\alpha^2s^2\sigma^4}\right)
	\end{align*}
\end{claim}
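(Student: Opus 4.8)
The plan is to compare the empirical quantity $\kappa^a_\infty(s)$, built from the sample Gram matrix $\bPsi^a$, with its population analogue built from the principal submatrix $\bSigma^a$ of $\bSigma$ (obtained by deleting the $a$-th row and column), and to show that the gap between the two is uniformly small over $a$ on the high-probability event already supplied by Claim~\ref{claim: ds2}.

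First I would establish a population lower bound that is free of the cone. For the induced $\ell_\infty$ operator norm one has $\|\bDelta\|_\infty \le \|(\bSigma^a)^{-1}\|_\infty\,|\bSigma^a\bDelta|_\infty$ for every $\bDelta$, hence $|\bSigma^a\bDelta|_\infty \ge \|(\bSigma^a)^{-1}\|_\infty^{-1}$ whenever $\|\bDelta\|_\infty = 1$. Since the cone $\{\bDelta:\|\bDelta_{J^c}\|_1\le\|\bDelta_J\|_1\}$ is only a subset of $\{\|\bDelta\|_\infty=1\}$, discarding the cone restriction can only lower the minimizing value, so together with $\|(\bSigma^a)^{-1}\|_\infty \le \|\bSigma^{-1}\|_\infty$ this gives a population floor of at least $\|\bSigma^{-1}\|_\infty^{-1}$ for every $a\in\wbK$.

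Next comes the perturbation step. For any $\bDelta$ I would write $|\bPsi^a\bDelta|_\infty \ge |\bSigma^a\bDelta|_\infty - |(\bPsi^a-\bSigma^a)\bDelta|_\infty$ and bound the last term entrywise by $|(\bPsi^a-\bSigma^a)\bDelta|_\infty \le |\bPsi^a-\bSigma^a|_\infty\,\|\bDelta\|_1$. The cone constraint is exactly what tames $\|\bDelta\|_1$: for $|J|\le s$ and $\|\bDelta\|_\infty=1$ one has $\|\bDelta\|_1 \le 2\|\bDelta_J\|_1 \le 2s$. Because $\bPsi^a-\bSigma^a$ is a principal submatrix of $\bPsi-\bSigma$, Claim~\ref{claim: ds2} shows that on an event of probability at least $1-K^2\exp(-n/(150\alpha^2s^2\sigma^4))$ we have $|\bPsi^a-\bSigma^a|_\infty < (2\alpha s)^{-1}$ simultaneously for all $a$, so the perturbation is strictly below $2s\cdot(2\alpha s)^{-1}=\alpha^{-1}$. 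Combining the two displays, on this single event $|\bPsi^a\bDelta|_\infty \ge \|\bSigma^{-1}\|_\infty^{-1}-\alpha^{-1}$ uniformly over $a$ and over the cone, which is precisely $\kappa^a_\infty(s)\ge\|\bSigma^{-1}\|_\infty^{-1}-\alpha^{-1}$ for every $a$; passing to the complementary event then yields the asserted probability bound, with $\alpha>\|\bSigma^{-1}\|_\infty$ guaranteeing that the floor is positive.

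The main obstacle is the uniform control of $\sup_{\bDelta}|(\bPsi^a-\bSigma^a)\bDelta|_\infty$ over the entire cone rather than at a single direction: the argument rests on converting the entrywise closeness $|\bPsi-\bSigma|_\infty$ into control of the operator acting on every feasible $\bDelta$, and it is the cone inequality $\|\bDelta\|_1\le 2s$ together with the threshold $(2\alpha s)^{-1}$ calibrated in Claim~\ref{claim: ds2} that makes the product collapse to the clean constant $\alpha^{-1}$. A secondary point requiring care is the passage from $\|(\bSigma^a)^{-1}\|_\infty$ to $\|\bSigma^{-1}\|_\infty$ (via the block-inverse/Schur-complement identity and assumption B1.3(b)), which is what keeps the population floor bounded away from zero uniformly in $a$.
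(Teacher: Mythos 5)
Your proposal is correct and follows essentially the same route as the paper: lower-bound the population quantity by $\|\bSigma^{-1}\|_\infty^{-1}$ via the operator-norm inequality, bound the perturbation by $\|\bDelta\|_1\,|\bPsi^a-\bSigma|_\infty\le 2s\,|\bPsi-\bSigma|_\infty$ using the cone constraint, and invoke Claim~\ref{claim: ds2} to make the perturbation less than $\alpha^{-1}$ on the stated event. You are in fact somewhat more careful than the paper, which silently identifies $\bSigma$ with its principal submatrix $\bSigma^a$ and does not address the passage from $\|(\bSigma^a)^{-1}\|_\infty$ to $\|\bSigma^{-1}\|_\infty$ that you flag.
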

\begin{proof}
	The result follows from Claim~\ref{claim: ds2} and the inequality below: for all $a\in\wbK$, 
	\begin{align*}
		\kappa^a_\infty(s) 
		& \geq 
		\min_{J:|J|\leq s}\Big(\min_{\bDelta\in\mathbb{R}^p: \|\bDelta_{J^c}\|_1\leq\|\bDelta_{J}\|_1\atop |\bDelta|_\infty=1 }\left|\bSigma\bDelta\right|_\infty\Big)
		- \max_{J: |J|\leq s}\Big(\max_{\bDelta\in\mathbb{R}^p: \|\bDelta_{J^c}\|_1 \leq \|\bDelta_{J}\|_1 \atop |\bDelta|_\infty=1 }\left|\left(\bPsi^a-\bSigma\right)\bDelta\right|_\infty\Big) \\
		& \geq
		\|\bSigma\|_\infty^{-1} - \max_{J: |J|\leq s}\max_{\bDelta\in\mathbb{R}^p: \|\bDelta_{J^c}\|_1 \leq \|\bDelta_{J}\|_1 \atop |\bDelta|_\infty=1 }\|\bDelta\|_1\left|\bPsi^a - \bSigma\right|_\infty \\
		& \geq
		\|\bSigma^{-1}\|_\infty^{-1} - 2s\left|\bPsi - \bSigma\right|_\infty.
	\end{align*}
\end{proof}

\begin{theorem} \label{thm: ds0_asy}
	Let assumption B1 hold and assume $\bmu$ to be known. Let $\lambda_n^{-1} = O(n^{\frac{1-\epsilon}{2}})$ with some $\epsilon>0$ be such that $\xi>\epsilon>2\kappa-2p+1$. If, in addition, C0 holds with  $\delta = O(n^{-p})$ for some $p>\kappa+(1-\xi)/2$, 
	\begin{align*}
		P\big(\wh{E}^{\lambda,ds} = E\big)
		= 1 - O\big(\exp(-cn^{\min\{\epsilon,1-2\kappa\}})\big)
		\quad
		\text{as }n\rightarrow\infty.
	\end{align*}
\end{theorem}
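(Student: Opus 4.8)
The plan is to reduce the statement to a per-neighborhood support-recovery claim and then union bound. Precisely, I would show that for each fixed $a\in\wbK$ one has $P(\wh{\nb}_a^{\lambda,ds}=\nb_a)=1-O(\exp(-cn^{\min\{\epsilon,1-2\kappa\}}))$, and then conclude for the edge set as in Corollary~\ref{corollary: mb0_asy}: since $\wh{E}^{\lambda,ds}\neq E$ forces $\wh{\nb}_a^{\lambda,ds}\neq\nb_a$ for some $a$, a Bonferroni bound over the $K=O(n^\gamma)$ nodes (assumption B1.1) together with $\kappa<1/2$ (B1.3(a)), which guarantees $1-2\kappa>0$, preserves the rate because $n^\gamma\exp(-cn^\beta)=O(\exp(-c'n^\beta))$ for any fixed $\beta>0$. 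Throughout I would work on the event where C0 holds, so that every perturbation term arising from replacing $\bH$ by $\wh{\bH}=\bH+\bR$ is a deterministic $O(\delta)$ in the relevant norm (Claim~\ref{claim: mb1} and its analogues), intersected with the high-probability events of Claims~\ref{claim: ds2}--\ref{claim: ds4}; that C0 itself holds with probability $1-O(\exp(-c(\log n)^2))$ is supplied at the very end by Lemma~\ref{lemma: net_etaDiff}.

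First I would establish feasibility of the population vector $\btheta^a$ in \eqref{def: thetaG}, and then a sensitivity bound. By \eqref{model: mus} the residual of $\btheta^a$ in the $\wh{\bH}$-based constraint is $\bm{\xi}_a-\bR_{-a}\btheta^a_{-a}$, whose projection splits into $\tfrac1n(\wh{\bH}_{-a}-\bIn\bmu_{-a}^T)^T\bm{\xi}_a$, controlled by Claim~\ref{claim: ds3} with $q\asymp\lambda_n$, and a term of order $\delta$ (using $\|\btheta^a\|_1=O(1)$, which follows from $\theta^a_b\propto d_{ab}$ and $\|\bSigma^{-1}\|_\infty=O(1)$ in B1.3(b)). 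Taking $q$ a small multiple of $\lambda_n$ and using $\delta=O(n^{-p})=o(\lambda_n)$, this projection is $\le\lambda_n\le\lambda_{a,n}(\|\btheta^a\|_1)$, so $\btheta^a$ is feasible off an event of probability $O(\exp(-cn^\epsilon))$. Since the minimizer satisfies $\|\wt{\btheta}^{a,\lambda,ds}\|_1\le\|\btheta^a\|_1$, the difference $\wh{\bDelta}^a=\wt{\btheta}^{a,\lambda,ds}-\btheta^a$ obeys the cone condition $\|\wh{\bDelta}^a_{J^c}\|_1\le\|\wh{\bDelta}^a_{J}\|_1$ with $J=\nb_a$, $|J|\le s$. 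Applying both feasibility constraints gives $|\wh{\bPsi}^a\wh{\bDelta}^a_{-a}|_\infty\le 2\lambda_{a,n}(\|\btheta^a\|_1)$ for the $\wh{\bH}$-Gram matrix $\wh{\bPsi}^a$; replacing $\wh{\bPsi}^a$ by $\bPsi^a$ costs $|\wh{\bPsi}^a-\bPsi^a|_\infty\|\wh{\bDelta}^a_{-a}\|_1=O(\delta)\cdot 2s|\wh{\bDelta}^a_{-a}|_\infty=O(n^{\kappa-p})|\wh{\bDelta}^a_{-a}|_\infty$. Feeding the cone condition into the $\ell_\infty$-sensitivity $\kappa^a_\infty(s)$ of \eqref{def: kappa}, bounded below by a positive constant through Claim~\ref{claim: ds4}, and absorbing the $o(1)$ perturbation, I obtain $|\wt{\btheta}^{a,\lambda,ds}-\btheta^a|_\infty\le C\lambda_{a,n}(\|\btheta^a\|_1)$ with probability $1-O(\exp(-cn^{\min\{\epsilon,1-2\kappa\}}))$, the $n^{1-2\kappa}$ exponent being inherited from the chi-square bounds of Claims~\ref{claim: ds2} and \ref{claim: ds4}, where $s^2=O(n^{2\kappa})$.

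Next I would use the thresholded estimator \eqref{def: thetaPd} to recover $\nb_a$ exactly. For $b\notin\cl_a$ (so $\theta^a_b=0$) one has $|\wt{\theta}^{a,\lambda,ds}_b|\le|\wh{\bDelta}^a_{-a}|_\infty\le C\lambda_{a,n}(\|\btheta^a\|_1)$, which lies below the threshold $t_n\lambda_{a,n}(\|\wt{\btheta}^{a,\lambda,ds}\|_1)$ since $t_n\to\infty$ and $\|\wt{\btheta}^{a,\lambda,ds}\|_1$ differs from $\|\btheta^a\|_1$ by only $O(s\lambda_{a,n})$, making the two values of $\lambda_{a,n}$ comparable; thus these coordinates are set to $0$. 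For $b\in\nb_a$, B1.2 and B1.4 give $|\theta^a_b|\ge\upsilon|\pi_{ab}|\ge c\upsilon\,n^{-(1-\xi)/2}$, whereas the threshold is $t_n\lambda_{a,n}(\|\wt{\btheta}^{a,\lambda,ds}\|_1)=O(\log n)\cdot o(n^{-(1-\xi)/2}(\log n)^{-1})=o(n^{-(1-\xi)/2})$ by the upper-bound condition imposed on $\lambda_{a,n}(\cdot)$; hence $|\wt{\theta}^{a,\lambda,ds}_b|\ge|\theta^a_b|-|\wh{\bDelta}^a_{-a}|_\infty$ stays above the threshold and $b$ is retained. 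This separation is exactly where $\epsilon<\xi$ enters. Combining the three steps and union-bounding over $a$ yields the stated rate.

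The hard part will be the first two steps. Unlike \cite{candes2007dantzig} or \cite{rosenbaum2010sparse}, here the noise $\bm{\xi}_a$ and the design error $\bR_{-a}$ are not independent (both contain $\bR_a=\wh{\bEta}_a-\bEta_a$), so the measurement error cannot be treated as an exogenous perturbation; conditioning on C0 is what decouples the analysis by turning each such term into a uniform $O(\delta)$ bound. The scheme then closes only if these $\delta$-perturbations — especially those amplified by the sparsity factor $s=O(n^\kappa)$ through the cone and $\ell_1$-to-$\ell_\infty$ passage — stay negligible against the noise level $\lambda_n\asymp n^{-(1-\epsilon)/2}$. This is precisely the balance encoded by the hypotheses: $p>\kappa+(1-\epsilon)/2$, equivalent to $\epsilon>2\kappa-2p+1$, forces $s\delta=o(\lambda_n)$, while $p>\kappa+(1-\xi)/2$ keeps both $\kappa^a_\infty(s)$ bounded away from $0$ and the signal above the threshold.
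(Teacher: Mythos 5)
Your proposal is correct and follows essentially the same route as the paper's proof: feasibility of $\btheta^a$ via Claim~\ref{claim: ds3} plus the C0-controlled $\bR$-terms, the $\ell_\infty$-sensitivity bound through $\kappa^a_\infty(s)$ and Claims~\ref{claim: ds2}--\ref{claim: ds4}, and thresholding against the signal floor $|\theta^a_b|\geq c\upsilon n^{-(1-\xi)/2}$ from B1.2/B1.4. The only bookkeeping difference is that the paper carries the final $\ell_\infty$ bound in terms of $\lambda_{a,n}(\|\wt{\btheta}^{a,\lambda,ds}\|_1)$ --- the same quantity appearing in the threshold \eqref{def: thetaPd} --- which sidesteps your slightly informal comparability step between $\lambda_{a,n}(\|\btheta^a\|_1)$ and $\lambda_{a,n}(\|\wt{\btheta}^{a,\lambda,ds}\|_1)$ (not justified for an arbitrary increasing $\lambda_{a,n}(\cdot)$, though harmless for the Dantzig and MU instances).
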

\begin{proof}
	Note that by Claim~\ref{claim: ds3}, the property of chi-square distribution,  and 
	\begin{align*}
		&\, \big|n^{-1}(\wh{\bH}_{-a} - \bIn\bmu_{-a}^T)^T\big((\wh{\bEta}_a - \mu_a\bIn)-(\wh{\bH} - \bIn\bmu^T)\btheta^a\big)\big|_\infty \\
		= &\,
		\big|n^{-1}(\wh{\bH}_{-a} - \bIn\bmu_{-a}^T)^T\big((\bH_{-a} - \bIn\bmu_{-a}^T)\btheta^a_{-a} + \bm{\xi}_a - (\wh{\bH}_{-a} - \bIn\bmu_{-a}^T)\btheta^a_{-a}\big)\big|_\infty \\
		\leq &\, 
		\big|n^{-1}(\wh{\bH}_{-a} - \bIn\bmu_{-a}^T)^T\bR_{-a}\btheta^a_{-a}\big|_\infty + \max_{a\in\wbK}\big|n^{-1}(\wh{\bH}_{-a} - \bIn\bmu_{-a}^T)^T\bm{\xi}_a\big|_\infty\\
		\leq &\, 
		n^{-1/2}\delta\big(\sqrt{n}\delta + \max_{b\in\wbK}\|\bEta_b - \mu_b\bIn\|_2\big)s\upsilon^{-1} + \max_{a\in\wbK}\big|n^{-1}(\wh{\bH}_{-a} - \bIn\bmu_{-a})^T\bm{\xi}_a\big|_\infty, 
	\end{align*}
	we have for each $q > 0$, that with probability no less than $1 - 2K\exp\left(-2^{-1}n\right) - 2K^2\exp\left(-\tfrac{nq^2}{4\sigma^4+2\sigma^2q}\right)$,
	\begin{multline*}
		\max_{a\in\wbK}\big|n^{-1}(\wh{\bH}_{-a} - \bIn\bmu_{-a}^T)^T\big((\wh{\bEta}_a - \mu_a\bIn) - (\wh{\bH} - \bIn\bmu^T)\btheta^a\big)\big|_\infty \\
		\leq q+(4\sigma+\delta)\delta+(2\sigma+\delta)\upsilon^{-1}s\delta
	\end{multline*}
	Note that $\lambda_n^{-1} = O(n^{\frac{1-\epsilon}{2}})$, which means there exits $b > 0$ so that $\lambda_n \geq bn^{-\frac{1-\epsilon}{2}}$ for all $n$ large enough. Moreover, $s\delta = O(n^{\kappa-p}) = o(\lambda_{a,n}(\|\btheta^a\|_1))$.   Choosing $q=bn^{-\frac{1-\epsilon}{2}}/2$, we obtain $q+2\upsilon^{-1}s\delta \leq \lambda_{a,n}(\|\btheta^a\|_1)$ for all large $n$. Thus
	\begin{multline*}
		P\big(\big|n^{-1}(\wh{\bH}_{-a} - \bIn\bmu_{-a}^T)^T\big((\wh{\bEta}_a - \mu_a\bIn) - (\wh{\bH} - \bIn\bmu^T)\btheta^a\big)\big|_\infty
		\leq \lambda_{a,n}(\|\btheta^a\|_1), \,\forall a\in\wbK \big) \\
		= 1 - O(\exp(-cn^{\epsilon}))
		\quad\text{as }n\rightarrow\infty,
	\end{multline*}
	which means that the true parameter $\btheta^a$ falls into the feasible set of problem (3.8) with probability $1-O(\exp(-cn^\epsilon))$ as $n\rightarrow\infty$.  With $\bDelta^a=\wt{\btheta}^{a,\lambda,ds}_{-a}-\btheta^a_{-a}$, we obtain 
	\begin{align*}
	\left|\bPsi^a\bDelta^a\right|_\infty 
	\leq &  \big|n^{-1}(\wh{\bH}_{-a} - \bIn\bmu_{-a}^T)^T(\bH_{-a} - \bIn\bmu_{-a}^T)\bDelta^a\big|_\infty + \big|n^{-1}\bR_{-a}^T(\bH_{-a} - \bIn\bmu_{-a}^T)\bDelta^a\big|_\infty \\
	\leq  & 
	\big|n^{-1}(\wh{\bH}_{-a} - \bIn\bmu_{-a}^T)^T((\wh{\bEta}_a - \mu_a\bIn) - (\wh{\bH}_{-a} - \bIn\bmu_{-a}^T)\wt{\btheta}_{-a}^{a,\lambda,ds})\big|_\infty  \\
	& \hspace{2cm} + \big|n^{-1}(\wh{\bH}_{-a} - \bIn\bmu_{-a}^T)^T\bm{\xi}_a\big|_\infty + \big|n^{-1}(\wh{\bH}_{-a} - \bIn\bmu_{-a}^T)^T\bR_{-a}\wt{\btheta}^{a,ds}_{-a}\big|_\infty \\
	& \hspace{6cm} + \big|n^{-1}\bR_{-a}^T(\bH_{-a} - \bIn\bmu_{-a}^T)\bDelta^a\big|_\infty  \\
	\leq  & 
	\lambda_{a,n}\big(\|\wt{\btheta}^{a,\lambda,ds}\|_1\big) + \big|n^{-1}(\wh{\bH}_{-a} - \bIn\bmu_{-a}^T)^T\bm{\xi}_a\big|_\infty +\delta^2\|\wt{\btheta}_{-a}^{a,\lambda,ds}\|_1  \\
	& \hspace{5cm} + n^{-1/2}\big(2\|\wt{\btheta}_{-a}^{a,\lambda,ds}\|_1+\|\btheta_{-a}^a\|_1\big)\delta\max_{b\in\wbK}\|\bEta_b\|_2.
	\end{align*}
	Then, by Claim~\ref{claim: ds3}, the definition of $\wt{\btheta}^{a,\lambda,ds}_{-a}$ and the property of chi-square distribution, the following holds. For any constant $d>0$, there exists $c=c(d)>0$ so that as $n\rightarrow\infty$, 
	\begin{align*}
		P\big(\left|\bPsi^a\bDelta^a\right|_\infty \leq \lambda_{a,n}(\|\wt{\btheta}^{a,\lambda,ds}\|_1) + dn^{-\frac{1-\epsilon}{2}} +4\upsilon^{-1}s\delta, \, \forall a\in\wbK\big)
		=
		1-O\left(\exp(-cn^{\epsilon})\right). 
	\end{align*}
	By definition, we have  $\kappa_\infty(s)|\bDelta^a|_\infty\leq \left|\bPsi^a\bDelta^a\right|_\infty$. Using Claim~\ref{claim: ds4} with assumption B1.3.(b), there exists a constant $t > 0$ and some $c > 0$ so that
	\begin{align*}
		P\big(\left|\bDelta^a\right|_\infty \leq t\lambda_{a,n}(\|\wt{\btheta}^{a,\lambda,ds}\|_1),\, \forall a\in\wbK\big)
		=
		1-O\left(\exp(-cn^{\min\{1-2\kappa,\epsilon\}})\right)
		\quad\text{for }n\rightarrow\infty. 
	\end{align*}
	The assertion of Theorem~\ref{thm: ds0_asy} follows from the fact that $|\theta_b^a|=\Omega(n^{-(1-\xi)/2})$ for all $b\in\nb_a$, $a\in\wbK$, $\max_{a\in\wbK}\lambda_{a,n}(\|\btheta^a\|_1) = o(n^{-\frac{1-\xi}{2}}(\log n)^{-1})$, and $\log n \rightarrow\infty$. 
\end{proof}

\subsection{Proof for Section~\ref{subsec: extension}} 
To verify Theorem~\ref{thm: net_beta}, first observe that for all $k\in\wbK$, under assumption C0, 
\begin{align*}
	|\wb{\wh{\eta}}_k-\mu_k|
	&\leq
	|\wb{\wh{\eta}}_k-\wb{\eta}_k|+|\wb{\eta}_k-\mu_k| \\
	& =
	\frac{1}{n}\|\bm{1}_n^T(\wh{\bEta}_k-\bEta_k)\|_2+|\wb{\eta}_k-\mu_k| \\
	&\leq
	\frac{1}{\sqrt{n}}\|\wh{\bEta}_k-\bEta_k\|_2+|\wb{\eta}_k-\mu_k|\leq 
	\delta + |\wb{\eta}_k-\mu_k|. 
\end{align*}
Recalling that $\sqrt{n}(\wb{\eta}_k-\mu_k)/\sqrt{\sigma_{kk}}\sim N(0,1)$ and  $\sigma=\max_{k\in\wbK}\sqrt{\sigma_{kk}}$, we obtain
\begin{align} \label{Pineq: maxMu}
	P\Big(\max_{k\in\wbK}|\wb{\eta}_k-\mu_k| \geq \delta_\mu\Big)
	\leq
	2\sum_{k\in\wbK}\wt{\Phi}\left(\frac{\sqrt{n}\delta_\mu}{\sqrt{\sigma_{kk}}}\right)
	\leq 
	\sqrt{\frac{2}{\pi}}\frac{K\sigma}{\sqrt{n}\delta_\mu}\exp\left( -\frac{n\delta_\mu^2}{2\sigma^2}\right), 
\end{align}
and thus,
\begin{align*}
	P\Big(\max_{k\in\wbK}|\wb{\wh{\eta}}_k-\mu_k|\geq \delta+\delta_\mu\Big)
	\leq 
	\sqrt{\frac{2}{\pi}}\frac{K\sigma}{\sqrt{n}\delta_\mu}\exp\left( -\frac{n\delta_\mu^2}{2\sigma^2}\right).
\end{align*}
Using the trivial bound $\|\wb{\wh{\bEta}}-\bmu\|_2\leq \sqrt{K}\max_{k\in\wbK}|\wb{\wh{\eta}}_k-\mu_k|$, we have
\begin{align*}
	P\left(n^{\frac{b-\gamma}{2}}\|\wb{\wh{\bEta}}-\bmu\|_2 \geq n^{\frac{b-\gamma}{2}}K^{\frac{1}{2}}(\delta+\delta_\mu)\right)
	\leq 
	\sqrt{\frac{2}{\pi}}\frac{K\sigma}{\sqrt{n}\delta_\mu}\exp\left( -\frac{n\delta_\mu^2}{2\sigma^2}\right)
	\quad
	\text{as }n\rightarrow\infty.
\end{align*}
Thus, for $\delta=o(n^{-\frac{b}{2}})$ with $b<1,$ and $K=O(n^{\gamma})$, we obtain that, for any fixed $\delta_0>0$, 
\begin{align*}
	P\big(n^{\frac{b-\gamma}{2}}\|\wb{\wh{\bEta}}-\bmu\|\geq \delta_0\big) =
	O\left(\exp(-cn^{1-b})\right) 
	\quad
	\text{as }n\rightarrow\infty,
\end{align*}
which, if combined with Lemma~\ref{lemma: net_etaDiff} (note that the above arguments assume C0) is the first statement of Theorem~\ref{thm: net_beta}. Recall that  $\bbeta = \bX^+\bmu $ and $ \widehat\bbeta = \bX ^+\overline{\widehat{\bEta}}$. Using the assumptions, $\rank(\bX)=L$ and $\sigma_{\max}(\bX)=O(1)$, we also obtain that, for fixed $\delta_0>0$,
\begin{align*}
	P\big(n^{\frac{b-\gamma}{2}}\|\wh{\bbeta}-\bbeta\| \geq \delta_0\big) = O\left(\exp(-cn^{1-b})\right) 
	\quad
	\text{as }n\rightarrow\infty.
\end{align*}
Together with Lemma~\ref{lemma: net_etaDiff}, we have the results in Theorem~\ref{thm: net_beta}. 

For Corollary~\ref{corollary: net_mb_asy} and Corollary~\ref{corollary: net_ds_asy}, recall that, since $\bmu$ is unknown, we replace $\wh{\bH}$ by $\wh{\bH}-\bIn\wb{\wh{\bEta}}^{T}$ in \eqref{def: thetaLasso}. For our model (cf. \ref{model: matrix}), this means that we replace $\bR$ by 
\begin{align} \label{def: tildeR}
	\wt{\bR}=(\wh{\bH} - \bIn\wb{\wh{\bEta}}^{T}) - (\bH - \bIn{\bmu}^T) = \left((\wh{\eta}_k^{(t)}-\wb{\wh{\eta}}_k)-(\eta_k^{(t)}-\mu_k)\right)_{1\leq t\leq n \atop 1\leq k\leq K}.
\end{align}
Note that under assumption C0, for all $k\in\wbK$, 
\begin{align*} 
	\big|(\wh{\eta}_k^{(t)}-\wb{\wh{\eta}}_k)-(\eta_k^{(t)}-\mu_k)\big|
	\leq
	\big|\wh{\eta}_k^{(t)}-\eta_k^{(t)}\big| + \big|\wb{\wh{\eta}}_k-\mu_k\big| 
	\leq 
	2\delta+\big|\wb{\eta}_k-\mu_k\big|, 
\end{align*}
which means 
\begin{align} \label{ineq: norm_tildeR}
	|\wt{\bR}|_\infty\leq 2\delta + \max_{k\in\wbK}|\wb{\eta}_k-\mu_k|. 
\end{align}
Together with \eqref{Pineq: maxMu}, we have
\begin{align} \label{Pineq: norm_tildeR}
	P\big(|\wt{\bR}|_\infty\leq 2\delta + \delta_\mu\big) 
	\geq 
	1 - \sqrt{\frac{2}{\pi}}\frac{K\sigma}{\sqrt{n}\delta_\mu}\exp\left( -\frac{n\delta_\mu^2}{2\sigma^2}\right). 
\end{align}
Using \eqref{Pineq: norm_tildeR}, Corollary~\ref{corollary: mb0_asy} (or Theorem~\ref{thm: ds0_asy}) and Lemma~\ref{lemma: net_etaDiff}, we have Corollary~\ref{corollary: net_mb_asy} and Corollary~\ref{corollary: net_ds_asy}. 
\bibliography{reference1}

\newpage
\section*{Supplemental material} \label{sec: results}

Here we present further results of our simulation studies of the three methods introduced in the manuscript.

\subsection{Results: finite-sample performance as a function of the penalty parameter} \label{subsec: fun_penalty}
ROC curves are shown in figure~\ref{fig: roc2} - \ref{fig: roc6} for each of the following six cases: $n = 100$ and $K = 15, 30, 50, 80, 100$. The ROC curves are color-coded: Lasso: red, Dantzig selector: blue and MU-selector: green. $\lambda_{\rm opt}$ is the tuning parameter corresponding to the total (overall) minimum error rate. 
\begin{figure}[H]
	\centering
	\includegraphics[width=1\linewidth]{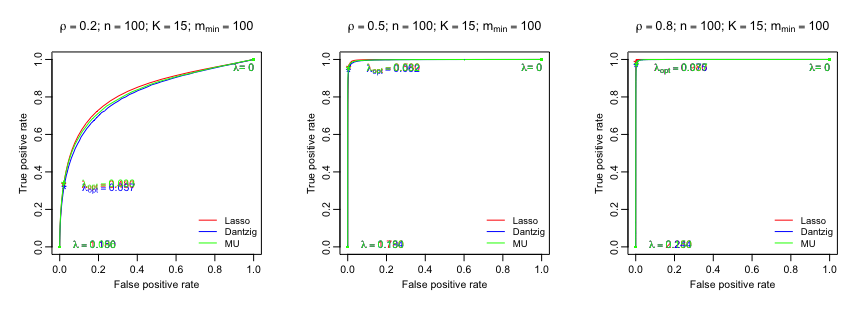} 
	\caption{ROC curves comparing the three proposed methods for $K = 15$ and $n = 100$}
	\label{fig: roc2} 
\end{figure}
\begin{figure}[H]
	\centering
	\includegraphics[width=1\linewidth]{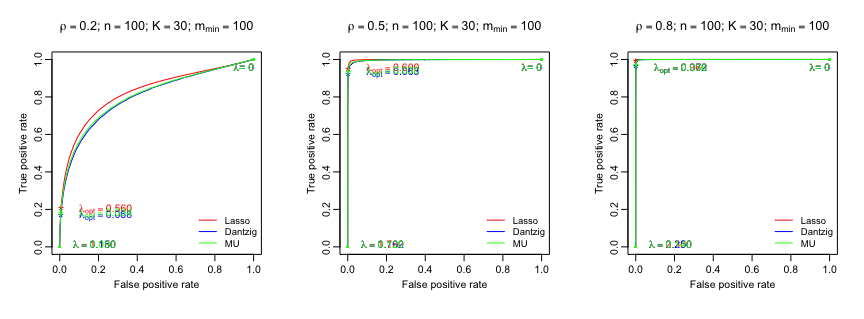} 
	\caption{ROC curves comparing the three proposed methods for $K = 30$ and $n = 100$}
	\label{fig: roc3} 
\end{figure}
\begin{figure}[H]
	\centering
	\includegraphics[width=1\linewidth]{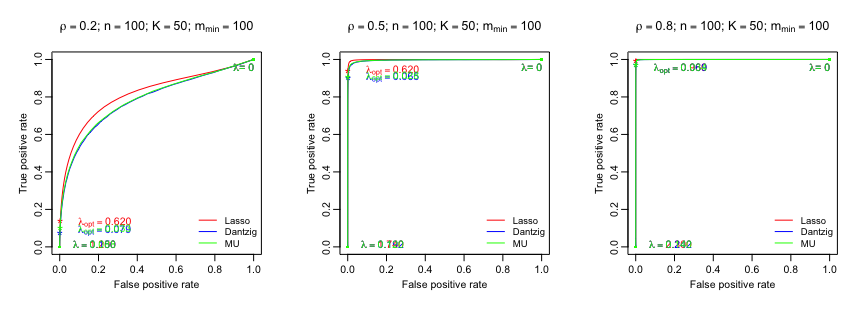} 
	\caption{ROC curves comparing the three proposed methods for $K = 50$ and $n = 100$}
	\label{fig: roc4} 
\end{figure}
\begin{figure}[H]
	\centering
	\includegraphics[width=1\linewidth]{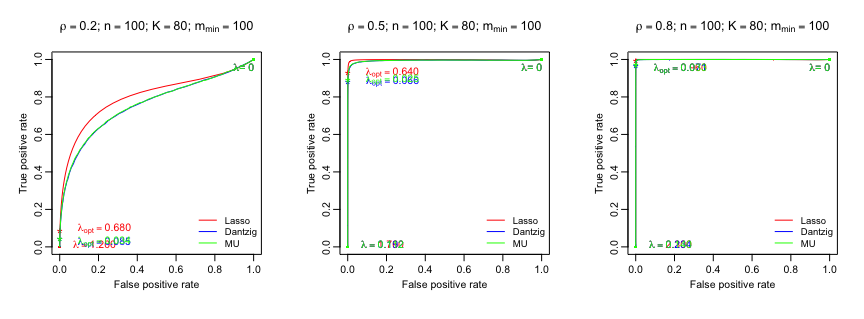} 
	\caption{ROC curves comparing the three proposed methods for $K = 80$ and $n = 100$}
	\label{fig: roc5} 
\end{figure}
\begin{figure}[H]
	\centering
	\includegraphics[width=1\linewidth]{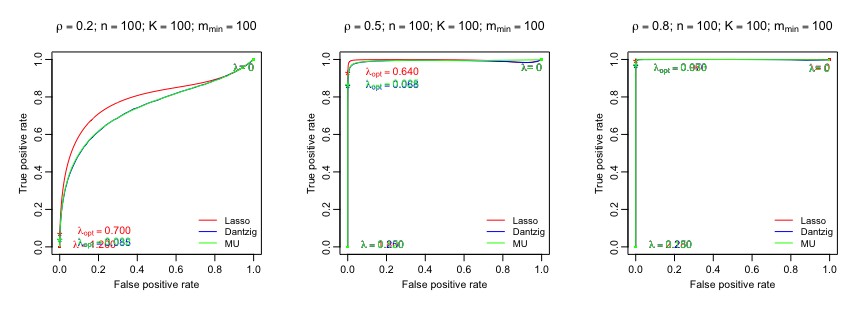} 
	\caption{ROC curves comparing the three proposed methods for $K = 100$ and $n = 100$}
	\label{fig: roc6} 
\end{figure}

\newpage
\noindent 
The average error rates for the three methods are shown in figure~\ref{fig: rate2} - \ref{fig: rate6} for each of cases: $n = 100$ and $K = 15, 30, 50, 80, 100$. 
\begin{figure}[H]
	\centering
	\includegraphics[width=1.02\linewidth]{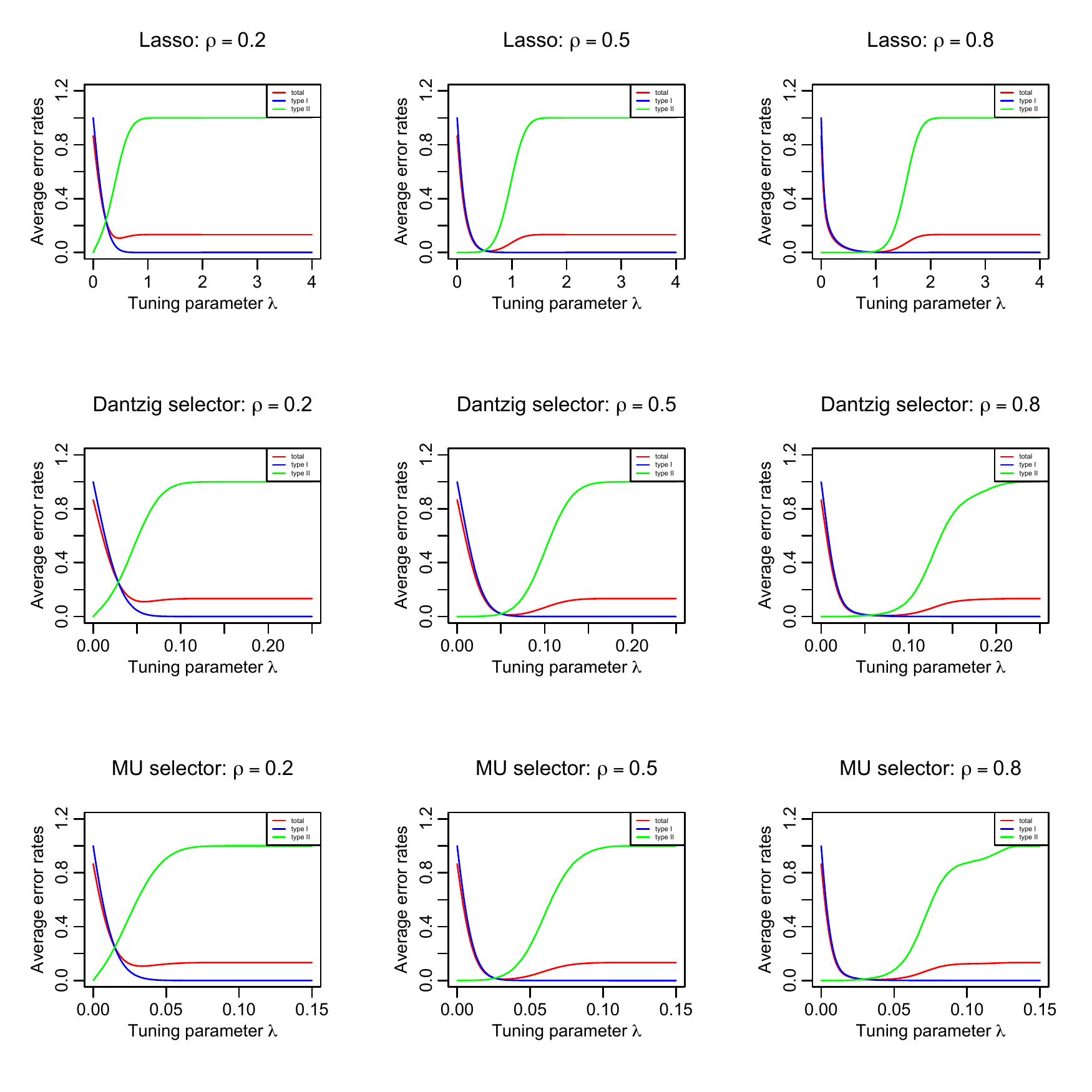} 
	\caption{Average error rates as functions of $\lambda$ for $K = 15$ and $n = 100$.}
	\label{fig: rate2} 
\end{figure}
\begin{figure}[H]
	\centering
	\includegraphics[width=1.02\linewidth]{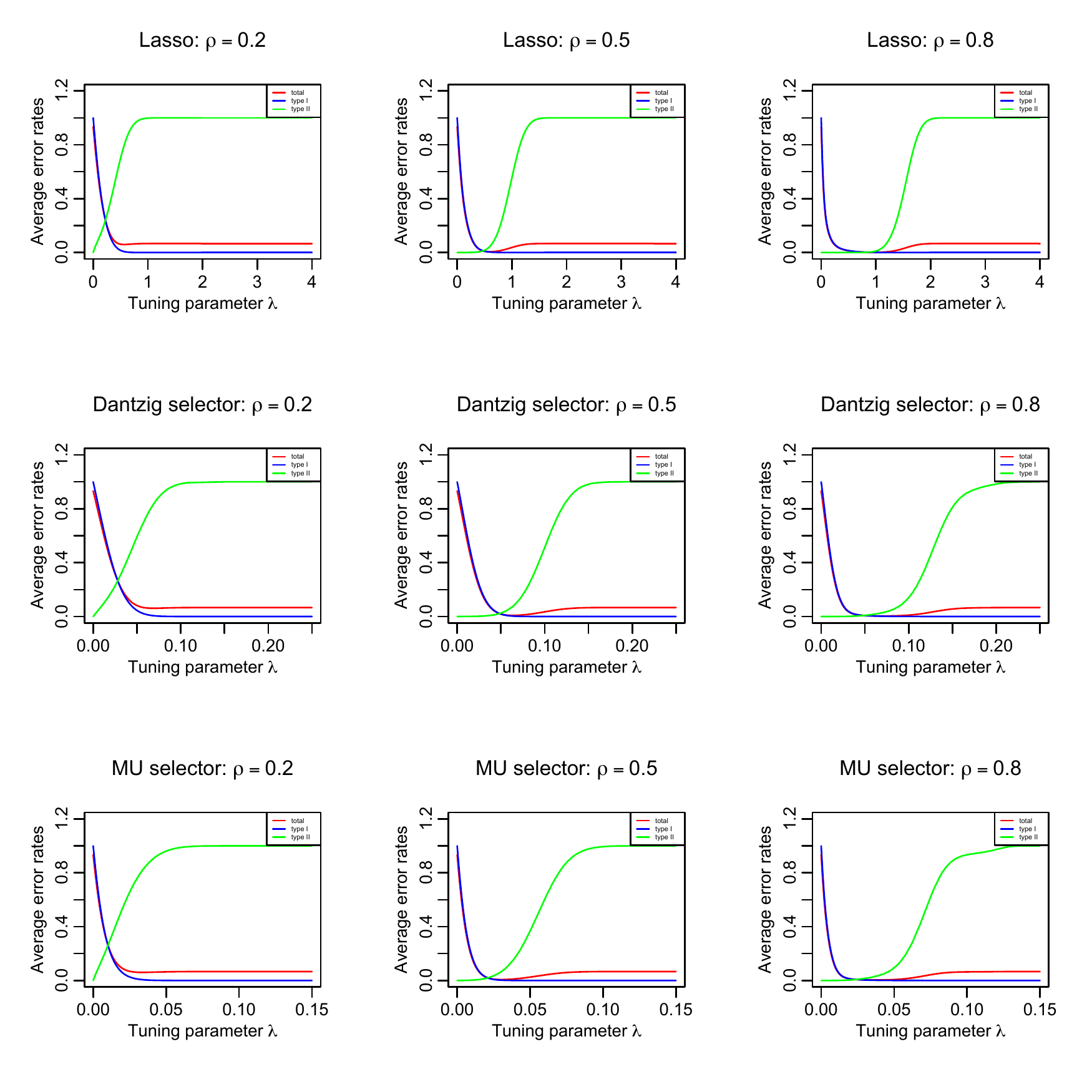} 
	\caption{Average error rates as functions of $\lambda$ for $K = 30$ and $n = 100$.}
	\label{fig: rate3} 
\end{figure}
\begin{figure}[H]
	\centering
	\includegraphics[width=1.02\linewidth]{rate3} 
	\caption{Average error rates as functions of $\lambda$  for $K = 50$ and $n = 100$.}
	\label{fig: rate4} 
\end{figure}
\begin{figure}[H]
	\centering
	\includegraphics[width=1.02\linewidth]{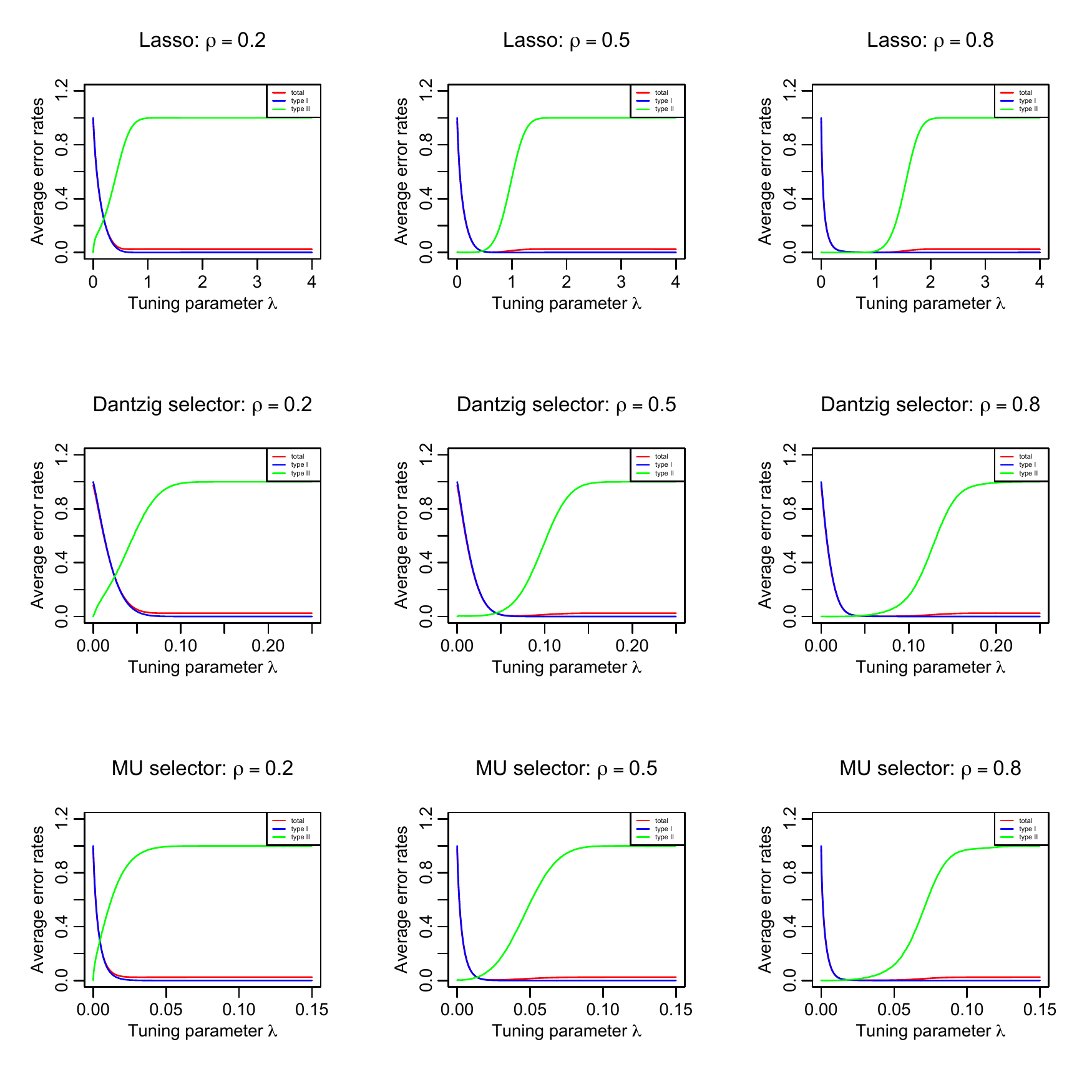} 
	\caption{Average error rates as functions of $\lambda$ for $K = 80$ and $n = 100$.}
	\label{fig: rate5} 
\end{figure}
\begin{figure}[H]
	\centering
	\includegraphics[width=1.02\linewidth]{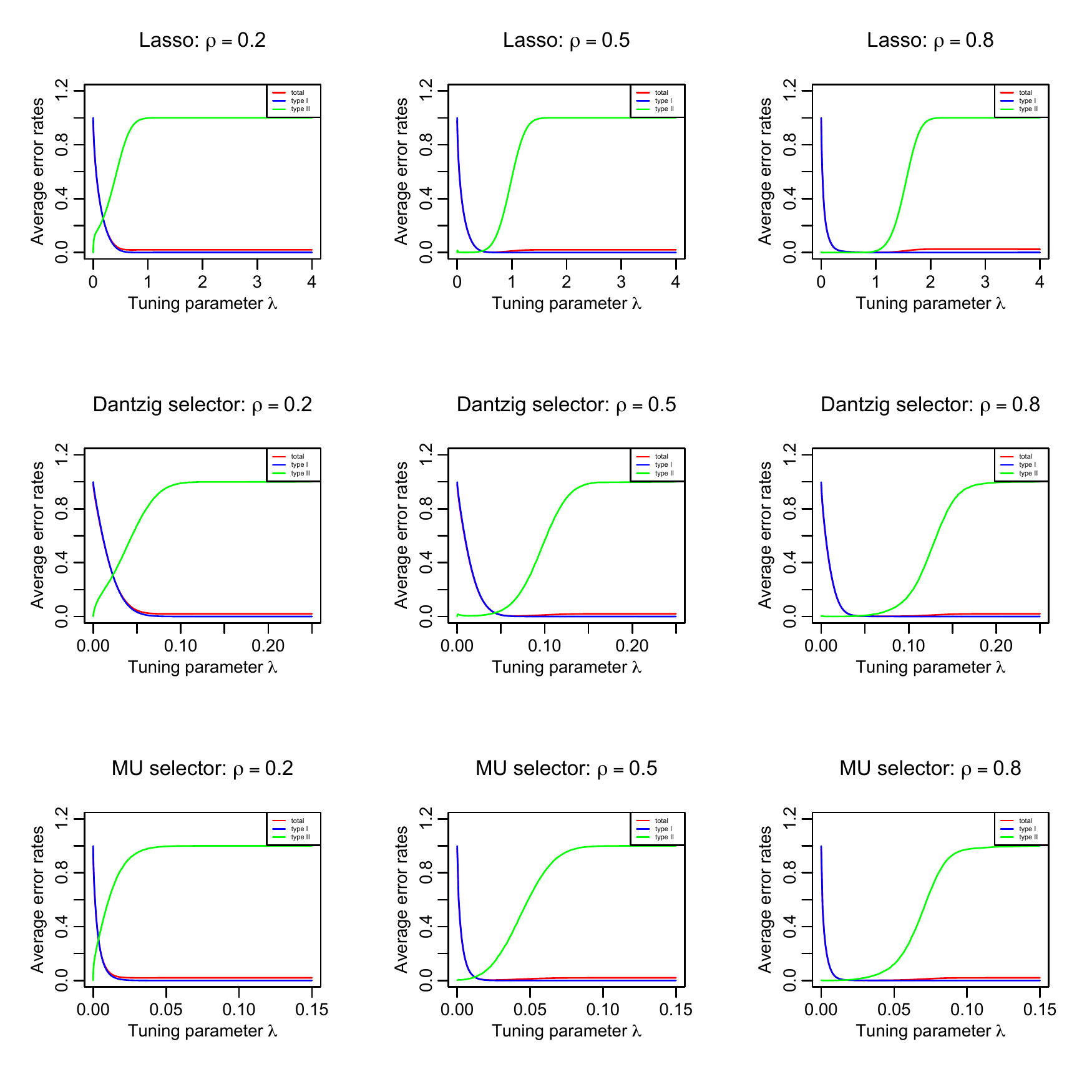} 
	\caption{Average error rates as functions of $\lambda$  for $K = 100$ and $n = 100$.}
	\label{fig: rate6} 
\end{figure}

\vspace{6mm}

\subsection{Results: finite-sample performance with data-driven penalty selection} \label{subsec: data_penalty}
All the following tables show averages and SEs of classification errors in \% over 100 replicates for the three proposed methods with both $``\vee"$ (left) and $``\wedge"$ (right).

\begin{table}[H]
	\caption{AR(1) model with $K = 30$}
	\begin{subtable}{1\textwidth}
		\sisetup{table-format=-1.2}   
		\centering
		\caption{$n = 100$}
		\begin{tabular}{c|c c c }
			\hline
			Ave (SE) 
			& Total (\%) &  Type \uppercase\expandafter{\romannumeral1} (\%)  & Type \uppercase\expandafter{\romannumeral2} (\%) \\
			\hline
			{Lasso}&1.59(1.19); 1.80(1.17)&1.68(1.26); 1.92(1.25)&0.43(1.39); 0.13(0.81)\\
			{Dantzig} &1.60(1.18); 1.73(1.10)&1.69(1.25); 1.84(1.18)&0.43(1.22); 0.13(0.81)
			\\
			{MU} &1.41(1.09); 1.83(1.29)&1.49(1.17); 1.96(1.38)&0.30(1.07) 0.07(0.47)\\
			\hline 
		\end{tabular}
	\end{subtable}
	
	\bigskip
	
	\begin{subtable}{1\textwidth}
		\sisetup{table-format=-1.2}   
		\centering
		\caption{$n = 500$}
		\begin{tabular}{c|c c c }
			\hline
			Ave (SE) 
			& Total (\%) &  Type \uppercase\expandafter{\romannumeral1} (\%)  & Type \uppercase\expandafter{\romannumeral2} (\%) \\
			\hline
			{Lasso}&1.16(1.07); 1.54(1.08) & 1.25(1.15); 1.65(1.16) & 0(0) \\
			{Dantzig} &1.21(1.05); 1.60(1.06) & 1.30(1.12); 1.72(1.13) & 0(0) \\
			{MU} &1.16(1.07); 1.73(1.16) &1.24(1.15); 1.86(1.24) & 0(0)\\
			\hline 
		\end{tabular}
	\end{subtable}

	\bigskip
	
	\begin{subtable}{1\textwidth}
		\sisetup{table-format=-1.2}   
		\centering
		\caption{$n = 1000$}
		\begin{tabular}{c|c c c }
			\hline
			Ave (SE) 
			& Total (\%) &  Type \uppercase\expandafter{\romannumeral1} (\%)  & Type \uppercase\expandafter{\romannumeral2} (\%) \\
			\hline
			{Lasso}&0.464(1.17); 1.04(1.70) & 0.498(1.26); 1.12(1.83) & 0(0) \\
			{Dantzig} &0.593(1.37); 1.09(1.75) & 0.636(1.47); 1.17(1.88) & 0(0) \\
			{MU} &0.543(1.39); 1.02(1.71) & 0.582(1.49); 1.09(1.83) & 0(0)\\
			\hline 
		\end{tabular}
	\end{subtable}
	\label{tab: ar1_k30}
\end{table}

\vspace{6mm}

\begin{table}[H]
	\caption{AR(1) model with $K = 200$}
	\begin{subtable}{1\textwidth}
		\sisetup{table-format=-1.2}   
		\centering
		\caption{$n = 100$}
		\begin{tabular}{c|c c c }
			\hline
			Ave (SE) 
			& Total (\%) &  Type \uppercase\expandafter{\romannumeral1} (\%)  & Type \uppercase\expandafter{\romannumeral2} (\%) \\
			\hline
			{Lasso}&0.386(0.250); 0.543(0.143)&0.390(0.253); 0.549(0.144)&0(0)\\
			{Dantzig} &0.416(0.237); 0.549(0.148)&0.420(0.239); 0.555(0.149)&0(0)
			\\
			{MU} &0.457(0.210); 0.550(0.174)&0.461(0.212); 0.556(0.176)&0(0)\\
			\hline 
		\end{tabular}
	\end{subtable}
	
	\bigskip
	
	\begin{subtable}{1\textwidth}
		\sisetup{table-format=-1.2}   
		\centering
		\caption{$n = 500$}
		\begin{tabular}{c|c c c }
			\hline
			Ave (SE) 
			& Total (\%) &  Type \uppercase\expandafter{\romannumeral1} (\%)  & Type \uppercase\expandafter{\romannumeral2} (\%) \\
			\hline
			{Lasso}&0.377(0.104); 0.436(0.119)&0.380(0.105); 0.441(0.121)&0(0)\\
			{Dantzig} &0.389(0.119); 0.436(0.129)&0.393(0.120); 0.440(0.130)&0(0)
			\\
			{MU} &0.372(0.107); 0.445(0.124)&0.376(0.108); 0.450(0.126)&0(0)\\
			\hline 
		\end{tabular}
	\end{subtable}
	
	\bigskip
	
	\begin{subtable}{1\textwidth}
		\sisetup{table-format=-1.2}   
		\centering
		\caption{$n = 1000$}
		\begin{tabular}{c|c c c }
			\hline
			Ave (SE) 
			& Total (\%) &  Type \uppercase\expandafter{\romannumeral1} (\%)  & Type \uppercase\expandafter{\romannumeral2} (\%) \\
			\hline
			{Lasso}&0.386(0.250); 0.543(0.143)&0.390(0.253); 0.549(0.144)&0(0)\\
			{Dantzig} &0.416(0.237); 0.549(0.148)&0.420(0.239); 0.555(0.149)&0(0)
			\\
			{MU} &0.457(0.210); 0.550(0.174)&0.461(0.212); 0.556(0.176)&0(0)\\
			\hline 
		\end{tabular}
	\end{subtable}
	\label{tab: ar1_k200}
\end{table}

\begin{table}[H]
	\caption{AR(4) model with $K = 30$}
	\begin{subtable}{1\textwidth}
		\sisetup{table-format=-1.2}   
		\centering
		\caption{$n = 100$}
		\begin{tabular}{c|c c c }
			\hline
			Ave (SE) 
			& Total (\%) &  Type \uppercase\expandafter{\romannumeral1} (\%)  & Type \uppercase\expandafter{\romannumeral2} (\%) \\
			\hline
			{Lasso}&20.3(1.21); 20.5(1.15)&2.72(1.47); 2.89(1.32)&71.7(4.44); 72.1(4.08)\\
			{Dantzig} &20.3(1.14); 20.6(1.29)&2.51(1.25); 3.10(1.48)&72.4(4.58); 71.9(4.57)
			\\
			{MU} &20.4(1.27); 20.6(1.24)&2.93(1.39); 3.03(1.38)&71.6(4.62); 72.2(4.64)\\
			\hline 
		\end{tabular}
	\end{subtable}
	
	\bigskip
	
	\begin{subtable}{1\textwidth}
		\sisetup{table-format=-1.2}   
		\centering
		\caption{$n = 500$}
		\begin{tabular}{c|c c c }
			\hline
			Ave (SE) 
			& Total (\%) &  Type \uppercase\expandafter{\romannumeral1} (\%)  & Type \uppercase\expandafter{\romannumeral2} (\%) \\
			\hline
			{Lasso}&10.4(2.03); 10.4(1.78)&1.21(0.89); 1.24(0.98)&37.4(9.06) 37.7(8.22)\\
			{Dantzig} &10.7(2.09); 10.6(1.87)&1.41(1.06); 1.36(1.07)&38.0(9.68); 38.1(8.68)
			\\
			{MU} &10.6(1.94); 10.5(1.85)&1.31(0.92); 1.38(1.04)&37.8(8.58); 37.4(8.57)
			\\
			\hline 
		\end{tabular}
	\end{subtable}
	
	\bigskip
	
	\begin{subtable}{1\textwidth}
		\sisetup{table-format=-1.2}   
		\centering
		\caption{$n = 1000$}
		\begin{tabular}{c|c c c }
			\hline
			Ave (SE) 
			& Total (\%) &  Type \uppercase\expandafter{\romannumeral1} (\%)  & Type \uppercase\expandafter{\romannumeral2} (\%) \\
			\hline
			{Lasso}&5.04(1.02); 5.10(0.98)&2.02(1.13); 1.94(1.06)&14.0(3.30); 14.5(3.33)\\
			{Dantzig} &5.25(1.04); 5.17(1.00)&2.03(1.19); 2.05(1.02)&14.8(3.48); 14.5(3.25)
			\\
			{MU} &5.27(0.95); 5.13(1.04)&2.10(1.07); 2.23(1.17)&14.6(3.01); 13.8(3.02)
			\\
			\hline 
		\end{tabular}
	\end{subtable}
	\label{tab: ar4_k30}
\end{table}
\vspace{6mm}
\begin{table}[H]
	\caption{AR(4) model with $K = 200$}
	\begin{subtable}{1\textwidth}
		\sisetup{table-format=-1.2}   
		\centering
		\caption{$n = 100$}
		\begin{tabular}{c|c c c }
			\hline
			Ave (SE) 
			& Total (\%) &  Type \uppercase\expandafter{\romannumeral1} (\%)  & Type \uppercase\expandafter{\romannumeral2} (\%) \\
			\hline
			{Lasso}&5.13(0.27); 4.81(0.18)&2.12(0.30); 1.64(0.21)&78.0(1.27); 81.4(1.36)\\
			{Dantzig} &5.07(0.30); 4.80(0.17)&2.03(0.34); 1.63(0.20)&78.5(1.41); 81.4(1.36)
			\\
			{MU} &5.16(0.28); 4.84(0.18)&2.15(0.31); 1.69(0.21)&77.9(1.17); 81.0(1.29)\\
			\hline 
		\end{tabular}
	\end{subtable}
	
	\bigskip
	
	\begin{subtable}{1\textwidth}
		\sisetup{table-format=-1.2}   
		\centering
		\caption{$n = 500$}
		\begin{tabular}{c|c c c }
			\hline
			Ave (SE) 
			& Total (\%) &  Type \uppercase\expandafter{\romannumeral1} (\%)  & Type \uppercase\expandafter{\romannumeral2} (\%) \\
			\hline
			{Lasso}&3.01(0.15); 3.09(0.16)&1.03(0.21); 1.06(0.21)&50.9(2.31); 52.0(2.09)\\
			{Dantzig} &3.03(0.16); 3.09(0.16)&1.02(0.20); 1.06(0.20)&51.6(2.10); 52.2(1.97)
			\\
			{MU} &3.02(0.13); 3.09(0.15)&1.02(0.17); 1.08(0.18)&51.3(2.13); 51.7(1.74)
			\\
			\hline 
		\end{tabular}
	\end{subtable}
	
	\bigskip
	
	\begin{subtable}{1\textwidth}
		\sisetup{table-format=-1.2}   
		\centering
		\caption{$n = 1000$}
		\begin{tabular}{c|c c c }
			\hline
			Ave (SE) 
			& Total (\%) &  Type \uppercase\expandafter{\romannumeral1} (\%)  & Type \uppercase\expandafter{\romannumeral2} (\%) \\
			\hline
			{Lasso}&1.99(0.13); 1.98(0.16)&1.08(0.17); 1.07(0.19)&24.0(1.80); 23.9(1.64)\\
			{Dantzig} &2.03(0.16); 2.02(0.16)&1.11(0.20); 1.11(0.20)&24.4(1.75); 24.1(1.64)
			\\
			{MU} &1.98(0.16); 2.00(0.14)&1.05(0.20); 1.09(0.17)&24.5(1.76); 24.1(1.52)
			\\
			\hline 
		\end{tabular}
	\end{subtable}
	\label{tab: ar4_k200}
\end{table}

\begin{table}[H]
	\caption{The random precision matrix model with  $\alpha = 0.1$ and $K = 30$}
	\vspace{-1.2mm}
	\begin{subtable}{1\textwidth}
		\sisetup{table-format=-1.2}   
		\centering
		\caption{$n = 100$}
		\begin{tabular}{c|c c c }
			\hline
			Ave (SE) 
			& Total (\%) &  Type \uppercase\expandafter{\romannumeral1} (\%)  & Type \uppercase\expandafter{\romannumeral2} (\%) \\
			\hline
			{Lasso}&6.19(2.24); 6.00(2.02)&5.01(2.01); 4.78(1.86)&16.9(8.50); 18.0(8.71)\\
			{Dantzig} &6.11(1.99); 6.32(2.06)&4.80(1.83); 5.04(1.84)&18.0(8.37); 18.6(8.73)
			\\
			{MU} &6.17(1.91); 6.29(1.92)&4.87(1.71); 5.08(1.63)&17.7(9.07); 17.5(8.84)\\
			\hline 
		\end{tabular}
	\end{subtable}
	
	\bigskip
	
	\begin{subtable}{1\textwidth}
		\sisetup{table-format=-1.2}   
		\centering
		\caption{$n = 500$}
		\begin{tabular}{c|c c c }
			\hline
			Ave (SE) 
			& Total (\%) &  Type \uppercase\expandafter{\romannumeral1} (\%)  & Type \uppercase\expandafter{\romannumeral2} (\%) \\
			\hline
			{Lasso}&1.21(0.87); 1.05(0.77)&1.33(0.95); 1.19(0.88)&0.05(0.38); 0.03(0.28)\\
			{Dantzig} &1.23(0.91); 1.09(0.89)&1.37(1.00); 1.23(1.01)&0.09(0.45); 0.00(0.00)
			\\
			{MU} &1.30(0.94); 1.01(0.73)&1.44(1.04); 1.13(0.81)&0.08(0.48); 0.01(0.09)
			\\
			\hline 
		\end{tabular}
	\end{subtable}
	
	\bigskip
	
	\begin{subtable}{1\textwidth}
		\sisetup{table-format=-1.2}   
		\centering
		\caption{$n = 1000$}
		\begin{tabular}{c|c c c }
			\hline
			Ave (SE) 
			& Total (\%) &  Type \uppercase\expandafter{\romannumeral1} (\%)  & Type \uppercase\expandafter{\romannumeral2} (\%) \\
			\hline
			{Lasso}&1.19(0.92); 0.90(0.74)&1.33(1.02); 1.02(0.84)&0(0)\\
			{Dantzig} &1.02(0.94); 0.94(0.82)&1.15(1.05); 1.06(0.92)&0(0)\\
			{MU} &1.01(0.92); 0.91(0.72)&1.14(1.04); 1.02(0.81)&0(0)\\
			\hline 
		\end{tabular}
	\end{subtable}
	\label{tab: rd0.1_k30}
\end{table}
\vspace{6mm}
\begin{table}[H]
	\caption{The random precision matrix model with $\alpha = 0.1$ and $K = 200$}
	\begin{subtable}{1\textwidth}
		\sisetup{table-format=-1.2}   
		\centering
		\caption{$n = 100$}
		\begin{tabular}{c|c c c }
			\hline
			Ave (SE) 
			& Total (\%) &  Type \uppercase\expandafter{\romannumeral1} (\%)  & Type \uppercase\expandafter{\romannumeral2} (\%) \\
			\hline
			{Lasso} &11.3(0.36); 10.7(0.29)&3.57 2.82&80.8(1.84); 82.3(1.96)\\
			{Dantzig} &11.2(0.31); 10.8(0.28)&3.27(0.34); 2.97(0.33)&82.3(1.80); 81.9(1.90)\\
			{MU}&11.0(0.29); 10.8(0.28)&3.18(0.32); 2.92(0.29)&82.1(1.74); 81.9(1.74)\\
			\hline 
		\end{tabular}
	\end{subtable}
	
	\bigskip
	
	\begin{subtable}{1\textwidth}
		\sisetup{table-format=-1.2}   
		\centering
		\caption{$n = 500$}
		\begin{tabular}{c|c c c }
			\hline
			Ave (SE) 
			& Total (\%) &  Type \uppercase\expandafter{\romannumeral1} (\%)  & Type \uppercase\expandafter{\romannumeral2} (\%) \\
			\hline
			{Lasso} &6.25(0.34); 6.23(0.33)&3.01(0.34); 3.05(0.31)&35.5(3.59); 35.0(3.48)\\
			{Dantzig} &6.90(0.30); 6.63(0.35)&3.26(0.36); 3.44(0.36)&39.7(3.52); 35.3(3.21)\\
			{MU}&6.89(0.34); 6.67(0.32)&3.27(0.39); 3.46(0.35)&39.5(3.47); 35.3(3.33)\\
			\hline 
		\end{tabular}
	\end{subtable}
	
	\bigskip
	
	\begin{subtable}{1\textwidth}
		\sisetup{table-format=-1.2}   
		\centering
		\caption{$n = 1000$}
		\begin{tabular}{c|c c c }
			\hline
			Ave (SE) 
			& Total (\%) &  Type \uppercase\expandafter{\romannumeral1} (\%)  & Type \uppercase\expandafter{\romannumeral2} (\%) \\
			\hline
			{Lasso} &3.15(0.27); 3.21(0.26)&2.24(0.27); 2.30(0.26)&11.3(1.95); 11.4(2.05)\\
			{Dantzig} &3.82(0.27); 3.60(0.28)&2.71(0.30); 2.68(0.30)&13.8(2.14); 11.8(2.03)\\
			{MU}&3.96(0.30); 4.06(0.35)&2.82(0.35); 3.09(0.38)&13.9(2.03); 12.3(1.71)\\
			\hline 
		\end{tabular}
	\end{subtable}
	\label{tab: rd0.1_k200}
\end{table}

\begin{table}[H]
	\caption{The random precision matrix model with  $\alpha = 0.5$ and $K = 30$}
	\begin{subtable}{1\textwidth}
		\sisetup{table-format=-1.2}   
		\centering
		\caption{$n = 100$}
		\begin{tabular}{c|c c c }
			\hline
			Ave (SE) 
			& Total (\%) &  Type \uppercase\expandafter{\romannumeral1} (\%)  & Type \uppercase\expandafter{\romannumeral2} (\%) \\
			\hline
			{Lasso}&42.0(2.86); 42.0(3.04)&11.8(3.44); 11.9(3.04)&72.5(5.76); 72.4(5.37)\\
			{Dantzig} &43.7(2.65); 43.3(2.75)&12.5(3.50); 13.1(3.04)&75.2(4.90); 73.8(4.77)
			\\
			{MU} &43.6(2.76); 43.4(2.84)&12.1(3.30); 13.0(3.42)&75.3(4.77); 74.0(4.79)
			\\
			\hline 
		\end{tabular}
	\end{subtable}
	
	\bigskip
	
	\begin{subtable}{1\textwidth}
		\sisetup{table-format=-1.2}   
		\centering
		\caption{$n = 500$}
		\begin{tabular}{c|c c c }
			\hline
			Ave (SE) 
			& Total (\%) &  Type \uppercase\expandafter{\romannumeral1} (\%)  & Type \uppercase\expandafter{\romannumeral2} (\%) \\
			\hline
			{Lasso}&16.7(3.46); 16.7(3.38)&11.4(3.46); 11.1(3.59)&22.0(6.49); 22.5(6.01)
			\\
			{Dantzig} &18.7(3.47); 17.8(3.41)&13.2(3.67); 12.6(3.73)&24.4(6.18); 23.2(6.07)
			\\
			{MU} &22.8(3.77); 21.5(3.64)&15.7(3.80); 15.4(3.50)&29.9(7.03); 27.6(6.60)
			\\
			\hline 
		\end{tabular}
	\end{subtable}
	
	\bigskip
	
	\begin{subtable}{1\textwidth}
		\sisetup{table-format=-1.2}   
		\centering
		\caption{$n = 1000$}
		\begin{tabular}{c|c c c }
			\hline
			Ave (SE) 
			& Total (\%) &  Type \uppercase\expandafter{\romannumeral1} (\%)  & Type \uppercase\expandafter{\romannumeral2} (\%) \\
			\hline
			{Lasso}&6.27(1.76); 6.42(1.69)&8.20(2.84); 8.68(2.96)&4.62(2.32); 4.51(2.48)\\
			{Dantzig} &7.89(1.91); 7.41(1.99)&10.5(3.12); 10.1(3.22)& 5.77(3.00); 5.22(2.48)\\
			{MU} &12.3(2.71); 12.0(3.00)&16.7(4.75); 16.5(5.40)&8.61(3.38); 8.02(2.90)
			\\
			\hline 
		\end{tabular}
	\end{subtable}
	\label{tab: rd0.5_k30}
\end{table}
\vspace{6mm}
\begin{table}[H]
	\caption{The random precision matrix model with $\alpha = 0.5$ and $K = 200$}
	\begin{subtable}{1\textwidth}
		\sisetup{table-format=-1.2}   
		\centering
		\caption{$n = 100$}
		\begin{tabular}{c|c c c }
			\hline
			Ave (SE) 
			& Total (\%) &  Type \uppercase\expandafter{\romannumeral1} (\%)  & Type \uppercase\expandafter{\romannumeral2} (\%) \\
			\hline
			{Lasso} &49.6(0.41); 49.7(0.42)&4.53(0.54); 3.43(0.49)&94.8(0.62); 96.0(0.55)\\
			{Dantzig} &49.7(0.43); 49.7(0.41)&4.19(0.51); 3.53(0.47)&95.3(0.58); 95.9(0.54)\\
			{MU}&49.7(0.42); 49.7(0.41)&4.17(0.44); 3.48(0.44)&95.3(0.54); 96.0(0.50)\\
			\hline 
		\end{tabular}
	\end{subtable}
	
	\bigskip
	
	\begin{subtable}{1\textwidth}
		\sisetup{table-format=-1.2}   
		\centering
		\caption{$n = 500$}
		\begin{tabular}{c|c c c }
			\hline
			Ave (SE) 
			& Total (\%) &  Type \uppercase\expandafter{\romannumeral1} (\%)  & Type \uppercase\expandafter{\romannumeral2} (\%) \\
			\hline
			{Lasso} &47.9(0.45); 47.9(0.46)&8.96(0.76); 9.10(0.77)&86.9(0.95); 86.7(0.99)\\
			{Dantzig} &48.8(0.45); 48.4(0.42)&8.63(0.78); 9.41(0.76)&88.9(0.93); 87.4(0.95)\\
			{MU}&48.8(0.44); 48.4(0.43)&8.57(0.71); 9.31(0.79)&89.0(0.82); 87.5(0.86)\\
			\hline 
		\end{tabular}
	\end{subtable}
	
	\bigskip
	
	\begin{subtable}{1\textwidth}
		\sisetup{table-format=-1.2}   
		\centering
		\caption{$n = 1000$}
		\begin{tabular}{c|c c c }
			\hline
			Ave (SE) 
			& Total (\%) &  Type \uppercase\expandafter{\romannumeral1} (\%)  & Type \uppercase\expandafter{\romannumeral2} (\%) \\
			\hline
			{Lasso} &44.6(0.49); 44.6(0.48)&12.4(1.11); 12.6(1.06)&76.8(1.65); 76.7(1.60)\\
			{Dantzig} &46.8(0.48); 46.0(0.48)&13.0(1.15); 13.7(1.18)&80.6(1.48); 78.3(1.57)\\
			{MU}&47.4(0.46); 46.5(0.47)&12.6(1.05); 13.1(0.98)&82.1(1.11); 80.0(1.17)\\
			\hline 
		\end{tabular}
	\end{subtable}
	\label{tab: rd0.5_k200}
\end{table}
\end{spacing}
\end{document}